\newsavebox{\mybox}
\newtheorem{theorem}{Theorem}
\numberwithin{theorem}{section}
\newtheorem{corollary}[theorem]{Corollary}
\newtheorem{proposition}[theorem]{Proposition}
\newtheorem{lemma}[theorem]{Lemma}
\theoremstyle{definition}
\numberwithin{theorem}{section}
\newtheorem{remark}[theorem]{Remark}
\newcommand{\C}{\mathbb{C}}
\newcommand{\F}{\mathbb{F}}
\newcommand{\bt}{\begin{theorem}}
	\newcommand{\et}{\end{theorem}}
\newcommand{\bd}{\begin{definition}}
	\newcommand{\ed}{\end{definition}}
\newcommand{\one}{1\!\!1}
\newcommand{\ind}{{\rm{ind}}}
\newcommand{\Irr}{{\rm{Irr}}}
\newcommand{\Alg}{{\rm{Alg}}}
\newcommand{\Ind}{{\rm{Ind}}}
\newcommand{\diag}{{\rm diag}}
\newcommand{\tr}{{\rm tr}}
	\ifodd\value{page}\relax
\numberwithin{equation}{section}
\title {Structure of Twisted Jacquet Modules of principal series representations of $Sp_{4}(F)$}
\author{Sanjeev Kumar Pandey}
\address{Sanjeev Kumar Pandey, Department of Mathematics, Indian Institute of Science Education and Research Tirupati, Srinivasapuram, Jangalapalli Village, Panguru (G.P),
	Yerpedu Mandal, Tirupati Dist., Andhra Pradesh,
	India – 517619.}
\email{sanjeevkumarpandey@students.iisertirupati.ac.in}
\author{C. G. Venketasubramanian}
\address{C. G. Venketasubramanian, Department of Mathematics, Indian Institute of Science Education and Research Tirupati, Srinivasapuram, Jangalapalli Village, Panguru (G.P),
	Yerpedu Mandal, Tirupati Dist., Andhra Pradesh,
	India – 517619.}
\email{venketcg@iisertirupati.ac.in}
\begin{document}
		\subjclass[2020]{Primary 11F70; Secondary 22E50}
		\keywords{Symplectic group, Twisted Jacquet module, Principal series representations.}

	\maketitle

	\begin{abstract}  
		{\footnotesize   Let $F$ be a non-archimedean local field. For the symplectic group $Sp_{4}(F),$ let $P$ and $Q$ denote respectively its Siegel and Klingen parabolic subgroups with respective Levi decompositions $P=MN$ and $Q=LU.$ For a non-trivial character $\psi$ of the unipotent radical $N$ of $P,$ let $M_{\psi}$ denote the stabilizer of the character $\psi$ in $M$ under the conjugation action of $M$ on characters of $N.$ For an irreducible representation of the Levi subgroups $M$ or $L,$ let $\pi$ denote the respective representation of $Sp_{4}(F)$ parabolically induced either from $P$ or from $Q.$ Let $\psi$ be a character of the group $N$ given by a rank one quadratic form. In this article, we determine the structure of the twisted Jacquet module $r_{N,\psi}(\pi)$ as an $M_{\psi}$-module. We also deduce the analogous results in the case where $F$ is a finite field of order $q.$
		}
	
\end{abstract}

\tableofcontents

\section{Introduction}\label{introduction}

Let $F$ be a non-archimedean local field or a finite field $\F_q.$ Let $G$ be the $F$-points of a connected reductive group defined over $F.$ Let $P$ be a parabolic subgroup of $G$ with Levi 
decomposition $P = M N.$ Let $(\pi,V)$ be any smooth representation 
of $G$ and $\psi$ be any non-trivial character of $N.$ Let $V_{N,\psi}$ 
denote the largest quotient of $V$ on which $N$ operates by $\psi.$ The quotient  $V_{N,\psi}$ is a representation of the subgroup $M_\psi $ of 
$M$  which stabilizes the character $\psi.$ This representation of $M_{\psi}$ on $V_{N,\psi}$ is denoted by
$r_{N,\psi}(\pi)$ and is called the twisted Jacquet module of $\pi.$ It is a natural and interesting question
to understand for which irreducible representations $\pi$ of $G$ the twisted Jacquet 
module $r_{N,\psi}(\pi)$ is non-zero and to determine the structure of $r_{N,\psi}(\pi)$ as an  $M_\psi$-module 
 whenever it is non-zero. In this article, we shall be addressing the question for parabolically induced representations of the symplectic group $Sp_{4}(F)$ with respect to a character $\psi$ of the unipotent radical of its Siegel parabolic subgroup where $\psi$ is given by a rank one quadratic form.

Primarily, the question of determining the structure of twisted Jacquet modules for parabolically induced representations of $p$-adic groups can be considered as a problem in the general setting of the Geometric lemma (see \cite{BZ1} and \cite{BZ2}) of Bernstein-Zelevinsky. On the other hand, computing the structure of twisted Jacquet modules for specific groups and representations have found applications in the study of branching laws of representations of $p$-adic groups and especially in the context (see for example \cite{DPSarah}, \cite{GS}, \cite{DPRamin}, \cite{DPJAlgmult}, \cite{DPDivAlgJNT}, \cite{DPRaghuramDuke}  and \cite{Brooks-Schmidt}) of distinction problems of representations of $p$-adic groups. 

We briefly recall certain contexts in which the structure of twisted Jacquet modules have been determined. In \cite{DPDegIMRN}, Prasad explicitly calculates the structure of the twisted Jacquet module of a cuspidal representation of $GL_{2n}(\mathbb{F}_q),$ with respect to a non-degenerate character of the unipotent radical of the maximal parabolic $P_{n,n}(\mathbb{F}_q).$ In \cite{OfirZahi}, the work of Prasad was extended to $GL_{k \cdot n}(\F_q).$ In recent work, K. Balasubramanian et. al (see \cite{KH1}, \cite{KH2} and \cite{KH3}) have further extended the work of Prasad by determining the twisted Jacquet module of a cuspidal representation of $GL_{2n}(\mathbb{F}_q)$ for a degenerate character of the unipotent radical given by a rank one matrix. For principal series representations, the structure of twisted Jacquet modules have been considered in  \cite{GS}, \cite {DPRamin}, \cite{DPDegIMRN}, \cite{DPDegTIFR} and \cite{Brooks-Schmidt}.

We would like to emphasize that our present work is motivated by the work of Prasad-Takloo-Bighash \cite{DPRamin} and Brooks-Schmidt \cite{Brooks-Schmidt} undertaken for the group $GSp_{4}(F)$ over a $p$-adic field. We wish to remark here that in \cite[\S 2]{DPRamin} and \cite[\S 5]{Brooks-Schmidt}, the character $\psi$ of the unipotent radical of the Siegel parabolic subgroup of $GSp_{4}(F)$ is given by a rank two quadratic form and it has applications in the context of the theory of Bessel models of representations of $GSp_{4}(F).$ For more detailed comments on the results in \cite[\S 2]{DPRamin}, we refer the reader to \cite[\S 5]{Brooks-Schmidt}.  In this article, we work over $Sp_{4}(F)$ and the character of the unipotent radical of the Siegel parabolic of $Sp_4(F)$ is given by a  rank one quadratic form in contrast to a rank two quadratic form considered in \cite{DPRamin} and \cite{Brooks-Schmidt}.

We fix minimal notation so as to state our main results. Let $F$ be a non-archimedean local field with characteristic not equal to $2.$ Let $|~|_{_F}$ denote the standard normalized absolute
value on $F.$ Let  $Sp_{4}(F)$ denote the symplectic group attached to the non-degenerate alternating form given by $\left[\begin{array}{cc}
0&I_{2}  \\
-I_{2}&0 
\end{array}
\right].$ 
Let $P$ denote the Siegel parabolic subgroup of $Sp_4(F)$ with Levi decomposition $P=MN$ where the Levi subgroup $M$ is identified with $GL_{2}(F)$ and the unipotent radical $N$ is identified with $SM_{2}(F),$ the vector subspace consisting of all symmetric $2\times 2$ matrices of the vector space $M_2(F).$  Let $Q$ denote the Klingen parabolic subgroup of $Sp_4(F).$ The group $Q$ has a Levi decomposition $Q=LU,$ where the Levi subgroup $L$ can be identified with $F^{\times}\times SL_{2}(F)$ and $U$ is the unipotent radical of $Q.$ 

Fix a non-trivial character $\psi_{_0}$ of the additive group $F.$ 
Any character of $N$ is of the form $\psi_{_A},$ for some $A \in SM_2(F)$ where $\psi_{_A}$ is given by $\left[\begin{array}{cc}
	I_2 & X \\
	0 & I_2
\end{array}\right] \mapsto \psi_{_0}(\tr(AX))$ for all $X \in SM_2(F).$ If $A$ is of rank one, then there exists $g \in GL_2(F)$ such that  $^tg A g =\left[\begin{array}{cc}
		\gamma & 0 \\
		0 & 0
	\end{array}\right]$ for some $\gamma \in F^{\times}.$ Consequently, if we set $C = \left[\begin{array}{cc}
	\gamma & 0 \\
	0 & 0
\end{array}\right]$ then  $\psi_{_A}$ is in the $M$-orbit of the character $\psi_{_C}$ of $N$ under conjugation action of $M$ on characters of $N,$ where  \begin{equation}\label{CharacterCorrspondngtoC}
\psi_{_C}\left(\left[\begin{array}{cc}
I_{2}&\left[\begin{array}{cc}
x & y \\
y & z
\end{array}\right] \\
0&I_{2} 
\end{array} \right]
\right)=\psi_{_0}\left(\tr\left(C\left[\begin{array}{cc}
x & y \\
y & z
\end{array}\right]\right) \right)=\psi_{_0}(\gamma x),
\end{equation} for every $x, y, z \in F.$ 

Given an irreducible representation $\rho$ of $M$ and $\varrho$ of $L$, let $\pi$ and $\Pi$ denote the normalized  parabolically induced representations $\ind_P^{Sp_4(F)}(\rho)$ and $\ind_Q^{Sp_4(F)}(\varrho)$ respectively.  We determine the structure of the twisted Jacquet module $r_{N,\psi_{_C}}(\pi)$ and $r_{N,\psi_{_C}}(\Pi).$  As in \cite{BZ2}, our twisted Jacquet modules are normalized. 

Let \begin{equation}\label{N_1,1andbardefn}
N_{1, 1} = \left\{
\left[\begin{array}{cc}
1 & x \\
0 & 1
\end{array}\right] : x\in F \right\} \text{ and }
\overline{N_{1, 1}} = \left\{
\left[\begin{array}{cc}
1 & 0 \\
x & 1
\end{array}\right] : x\in F \right\}
\end{equation} 
 denote the upper triangular unipotent subgroup of $GL_{2}(F)$ and its opposite respectively. 
 
 For any smooth representation $\rho$ of $GL_{2}(F),$ let $r_{\overline{N_{1, 1}}}(\rho)$ denote the Jacquet module of $\rho$  with respect to $\overline{N_{1,1}}.$
 For $\gamma \in F^{\times},$ let $\psi_{\gamma}: N_{1,1} \rightarrow \C^{\times}$ be the character of $N_{1,1}$ defined by
 \begin{equation}\label{Definitionpsi_x}
 	\psi_{\gamma}\left( \left[\begin{array}{cc}
 		1 & x \\
 		0 & 1
 	\end{array}\right]\right) = \psi_{_0}(\gamma x),
 \end{equation} 
 for every $x \in F.$
For any smooth representation $\tau$ of $SL_{2}(F),$ let $ r_{N_{1,1}}(\tau)$ denote the Jacquet module of $\tau.$ Moreover, let $r_{N_{1,1}, \psi_{\gamma}}(\tau)$ denote the twisted Jacquet module of $\tau$ with respect to $N_{1,1}$ and the non-trivial character $\psi_{\gamma}.$  
 We recall that, if $r_{N_{1,1},\psi_{\gamma}}(\tau)\neq 0$ then it acts as $\omega_{\tau},$ the central character of $\tau,$   which can be thought of as a representation of the group $\{\pm 1\}.$ Let $\pi$ be a smooth representation of an $\ell$-group $G.$ Let $\theta$ be a character of $G.$ We denote by $\theta \cdot \pi$ the representation of $G$ which is defined by $(\theta \cdot \pi)(g) = \theta(g) \cdot \pi(g)$ for $g \in G.$ 
We shall denote the contragredient (smooth dual) of $\pi$ by $\widetilde{\pi}.$  
 
 Let  $O_{2}(F,C)$ denote the orthogonal group preserving the symmetric bilinear form given by the matrix $C,$ i.e.,
 \begin{equation}\label{O2(F,C)-defn}
 	O_{2}(F,C) := \left\{\left[\begin{array}{cc} \pm 1 & 0 \\
 		y & d\end{array}\right] : d \in F^{\times}, y \in F \right\}.
 \end{equation}Let \begin{equation}\label{DefofT2FC}
T_{2}(F,C) := \left\{\left[\begin{array}{cc} \pm 1 & 0 \\0 & d\end{array}\right] : d \in F^{\times} \right\}
\end{equation} 
be the subgroup of $O_{2}(F,C)$ which is isomorphic to the group $ \{\pm 1\} \times F^{\times}.$ Note that $O_2(F,C)=T_2(F,C) \ltimes \overline{N_{1,1}}.$ Any representation of $T_2(F,C)$ can therefore be inflated to get a representation of $O_2(F,C)$ by composing with the quotient map from $O_2(F,C)$ to $T_2(F,C).$  The group $O_2(F,C)$ is not unimodular and its modular character is given by $\delta_{_{O_{2}(F,C)}}\left(\left[\begin{array}{cc}
	\pm 1 & 0 \\
	y & d
	\end{array}\right]\right) = |d|_{_F}$ for  $d \in F^{\times}$ and $y\in F.$

We now state our first result which gives the structure of the twisted Jacquet module of $\ind_P^{Sp_4(F)}(\rho)$ with respect to the character of $N$ defined by \eqref{CharacterCorrspondngtoC}. 

\begin{theorem}\label{SiegelcaseforDegA}
	Fix a non-trivial character $\psi_{_0}$ of the additive group $F.$ Let $P=MN$ 
	denote the Siegel parabolic subgroup of the symplectic group $Sp_{4}(F)$ and 
	let $\rho\in \Irr(GL_{2}(F))$ be regarded as a representation of the Levi 
	subgroup $M$ of $P.$ Denote $\pi=\ind_P^{Sp_{4}(F)}(\rho).$ Let $\psi_{_{C}}$ be the character of $N$ defined by \eqref{CharacterCorrspondngtoC}. The following statements hold:
	
	\begin{enumerate}
		\item If $\rho$ is not cuspidal,  $r_{N,\psi_{_{C}}}(\pi)$
		sits in the following short exact sequence of $O_{2}(F,C)$-modules:
		\begin{equation*}
		0\to  \widetilde{\rho}_{|_{O_{2}(F,C)}}\to r_{N,\psi_{_{C}}}(\pi)\to \rho_{_0} \to 0,
		\end{equation*}
		where $\rho_{_0}$ is the representation of $O_{2}(F,C)$ obtained from $(\delta_{_{O_{2}(F,C)}}^{\frac{1}{2}}  \cdot r_{\overline{N_{1, 1}}}(\rho))_{|_{T_{2}(F,C)}}$ via composing with the quotient map.

		\item If $\rho$ is cuspidal, $r_{N,\psi_{_{C}}}(\pi) = \widetilde{ \rho}_{|_{O_{2}(F,C)}}$ as $O_{2}(F,C)$-modules. 
	\end{enumerate}
	
\end{theorem}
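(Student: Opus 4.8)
The plan is to use the geometric lemma of Bernstein–Zelevinsky to analyze $r_{N,\psi_{_C}}\big(\ind_P^{Sp_4(F)}(\rho)\big)$ via the orbits of $P\backslash Sp_4(F)/P$, which are indexed by a finite set of double coset representatives (Weyl group elements relative to $P$). For each representative $w$, the geometric lemma contributes a subquotient of the Jacquet module obtained by restricting $\rho$ to the ``overlap'' $M\cap w^{-1}Pw$, twisting by a modular character, and then taking the appropriate (twisted) Jacquet functor along $N\cap w^{-1}Nw$ with respect to $\psi_{_C}$; an orbit contributes $0$ unless $\psi_{_C}$ restricted to $N\cap w^{-1}Nw$ agrees with the relevant character coming from $w^{-1}Nw$. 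First I would list the $P\backslash Sp_4/P$ double cosets explicitly (there are a small number, since $M\cong GL_2$ has Weyl group of order $2$ inside the full Weyl group of $Sp_4$ of order $8$, so $|W_M\backslash W/W_M|$ is small), writing down an explicit matrix representative for each $w$ in $Sp_4(F)$ with respect to the alternating form fixed in the introduction.

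Next I would go orbit by orbit. The ``big cell'' representative (the long element $w_\ell$ with $w_\ell N w_\ell^{-1} = \overline N$) is the one I expect to produce the quotient term $\widetilde\rho_{|_{O_2(F,C)}}$: here $w_\ell^{-1}Nw_\ell = \overline N$ meets $N$ trivially, so the twisted Jacquet functor does nothing unipotent, and what survives is $\rho$ restricted to the stabilizer $M_{\psi_{_C}}$, suitably dualized by the appearance of $w_\ell$ (which sends $\rho$ to a twist of $\widetilde\rho$ on $GL_2$) and normalized by $\delta$'s that I expect to cancel. I would identify $M_{\psi_{_C}}$ with $O_2(F,C)$ as in \eqref{O2(F,C)-defn}: the stabilizer in $M=GL_2(F)$ of $\psi_{_C}$ under $g\cdot\psi_{_A} = \psi_{{}^t\!gAg}$ is exactly $\{g : {}^tgCg = C\} = O_2(F,C)$ with $C=\mathrm{diag}(\gamma,0)$. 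For the intermediate and identity double cosets, the relevant intersection $N\cap w^{-1}Nw$ is a proper nonzero subgroup of $N\cong SM_2(F)$; the twisted Jacquet functor with respect to $\psi_{_C}$ restricted there is nonzero only when that restriction is trivial, which happens for the ``small'' cells and yields an (ordinary) Jacquet module of $\rho$ along a subgroup of $\overline{N_{1,1}}$, followed by a twist; I expect exactly one such cell to survive and to give the subrepresentation $\rho_{_0}$ built from $(\delta_{O_2(F,C)}^{1/2}\cdot r_{\overline{N_{1,1}}}(\rho))_{|_{T_2(F,C)}}$, with the other cells contributing $0$ either because the character restriction is nontrivial or because of a dimension/support obstruction.

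Finally, in the cuspidal case, the ordinary Jacquet module $r_{\overline{N_{1,1}}}(\rho)$ vanishes by definition of cuspidality, so the subrepresentation term $\rho_{_0}$ disappears and only the big-cell term $\widetilde\rho_{|_{O_2(F,C)}}$ remains, giving part (2); for part (1) one gets the stated short exact sequence with $\widetilde\rho_{|_{O_2(F,C)}}$ as the sub and $\rho_{_0}$ as the quotient, the direction of the sequence being dictated by the closure order of the double cosets (the open cell gives a sub after applying an exact Jacquet functor to a filtration whose open piece is a subobject). The main obstacle I anticipate is the bookkeeping of normalized modular characters: keeping track of $\delta_P$, $\delta_Q$, the modular character $\delta_{O_2(F,C)}$ with $\delta_{O_2(F,C)}(\mathrm{diag}(\pm1,d)) = |d|_F$, and the Jacobian of the conjugation $w^{-1}(-)w$ on each $N\cap w^{-1}Nw$, and verifying that these combine to exactly the half-power $\delta_{O_2(F,C)}^{1/2}$ appearing in the statement and to the unnormalized restriction $\widetilde\rho_{|_{O_2(F,C)}}$ (with no leftover twist) on the big cell. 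A secondary subtlety is checking that the twisted Jacquet functor commutes appropriately with restriction to $M_{\psi_{_C}}=O_2(F,C)$ and that the identification $O_2(F,C) = T_2(F,C)\ltimes\overline{N_{1,1}}$ is the one under which $r_{\overline{N_{1,1}}}(\rho)$ naturally becomes a $T_2(F,C)$-module before inflation.
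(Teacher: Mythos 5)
Your overall strategy matches the paper's: apply the Bernstein--Zelevinsky geometric lemma to $\ind_P^{Sp_4(F)}(\rho)$, identify the open cell $\sigma_2$ (the long Weyl element, with $\sigma_2 N\sigma_2^{-1}=\overline{N}$) as producing $\widetilde{\rho}|_{O_2(F,C)}$ as a subrepresentation via the Gelfand--Kazhdan theorem, identify a smaller cell as producing the quotient $\rho_0$ built from $(\delta_{O_2(F,C)}^{1/2}\cdot r_{\overline{N_{1,1}}}(\rho))|_{T_2(F,C)}$, and note that the cuspidal case collapses since $r_{\overline{N_{1,1}}}(\rho)=0$. The modular-character bookkeeping you flag as the main obstacle is indeed the bulk of the verification, and you have the final direction of the exact sequence right.

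However, the orbit space you propose to decompose along is the wrong one, and this is a genuine gap. The geometric lemma for a twisted Jacquet module $r_{N,\psi_{_C}}$ of $\ind_P^G(\rho)$ runs over the double cosets $P\backslash G/S_{\psi_{_C}}$, where $S_{\psi_{_C}}$ is the stabilizer of $\psi_{_C}$ in $P$, \emph{not} over $P\backslash G/P$, which is what ``Weyl group elements relative to $P$'' and ``$|W_M\backslash W/W_M|$'' count. Here $|W_M\backslash W/W_M|=3$, but Proposition~\ref{S-psi-orbit-Sp4/P} shows that $P\backslash Sp_4(F)/S_{\psi_{_C}}$ has \emph{four} representatives $\{I_4, \sigma_1^{-1}, (\tau_1\sigma_1)^{-1}, \sigma_2^{-1}\}$: the middle $(P,P)$-orbit of $\sigma_1$ splits into two $S_{\psi_{_C}}$-orbits, coming from the two $O_2(F,C)$-orbits on $\overline{B}\backslash GL_2(F)$ (Proposition~\ref{O2C-acts-GmodBbar} and Lemma~\ref{Spsi-OrbitinSiegelDegA}). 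This refinement is exactly what makes the argument go through: $\psi_{_C}$ is trivial on $\sigma_1 N\sigma_1^{-1}\cap N = N^0$, so $\sigma_1$ contributes $\rho_0$, but nontrivial on $(\tau_1\sigma_1)N(\tau_1\sigma_1)^{-1}\cap N = N^2$, so $\tau_1\sigma_1$ contributes zero. Your sentence ``I expect exactly one such cell to survive\dots with the other cells contributing $0$ either because the character restriction is nontrivial or because of a dimension/support obstruction'' waves at this without identifying $\tau_1$, and that identification is precisely what is missing. Two smaller points: you omit the decomposability conditions of \cite{BZ2} (verified in Lemma~\ref{Decomposability-verification-Siegel}) that must be checked before the geometric lemma applies, and you initially place $\widetilde{\rho}|_{O_2(F,C)}$ on the wrong side of the sequence (``quotient term'') before correcting yourself at the end.
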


The next theorem deals with the Klingen parabolic subgroup and is the second main result of the article.

\begin{theorem}\label{KlingencaseforDegA}
	Fix a non-trivial character $\psi_{_0}$ of the additive group $F.$ Let $Q = L U$ denote the Klingen parabolic subgroup of the symplectic group $Sp_{4}(F).$ Suppose $\varrho = \eta \otimes \tau \in \Irr(L)$ where $\eta$ is a character of $F^{\times}$ and  $\tau \in \Irr(SL_{2}(F)).$ Denote $\Pi = \ind_{Q}^{Sp_{4}(F)}\left(\varrho\right).$ Let $\psi_{_C}$ be the character of $N$ defined by \eqref{CharacterCorrspondngtoC}. 
	Let $\varrho_{_0}$ and $\varrho_{_1}$ denote the representations of $O_{2}(F,C)$ obtained respectively from the representations $\delta_{_{O_{2}(F,C)}}^{\frac{1}{2}} \cdot (r_{N_{1,1}, \psi_{\gamma}}(\tau) \otimes \eta)$ and $\delta_{_{O_{2}(F,C)}}^{\frac{1}{2}} \cdot  (\eta_{_{|_{\{\pm{1}\}}}}\otimes  r_{N_{1,1}}(\tau))$ of $T_2(F,C)$ via composing with the quotient map. Let $\varrho_{_2}$ denote the representation
	$ r_{N_{1,1}, \psi_{\gamma}}(\tau) \otimes \eta^{-1}$ of $T_2(F,C).$
	Then, $r_{N,\psi_{_C}}(\Pi)$ is glued from the representations 
	$\varrho_{_0},$ $\varrho_{_1}$ and 	$\ind_{T_{2}(F,C)}^{O_{2}(F,C)}(\varrho_{_2})$ of $O_{2}(F,C).$

	\end{theorem}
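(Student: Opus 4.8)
The plan is to apply the geometric lemma of Bernstein–Zelevinsky (Mackey theory) to the composition of functors $r_{N,\psi_{_C}} \circ \ind_Q^{Sp_4(F)}$, exactly as one does for Theorem~\ref{SiegelcaseforDegA}, but now with the Klingen parabolic $Q=LU$ in place of the Siegel parabolic. First I would enumerate the double cosets $P \backslash Sp_4(F) / Q$; since both are maximal parabolics of $Sp_4$, this is a short, explicit list (I expect three or four representatives, which matches the three pieces $\varrho_{_0}, \varrho_{_1}, \ind_{T_2(F,C)}^{O_2(F,C)}(\varrho_{_2})$ appearing in the conclusion, with possibly one coset contributing nothing after twisting by $\psi_{_C}$). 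For each representative $w$, the geometric lemma gives a subquotient of $r_{N,\psi_{_C}}(\Pi)$ of the form obtained by: restricting $\varrho = \eta\otimes\tau$ along the conjugated subgroup, taking the $\psi_{_C}$-twisted coinvariants of the part lying in $N \cap wUw^{-1}$ (ordinary Jacquet-type coinvariants or genuinely twisted ones depending on how $\psi_{_C}$ restricts), inducing up over the complementary directions, and bookkeeping the half-sums of roots ($\delta^{1/2}$ factors) that the normalization forces. Ordering the cosets so that the $\psi_{_C}$-generic/open-type orbits come first, these subquotients assemble into a filtration of $r_{N,\psi_{_C}}(\Pi)$ whose successive quotients are precisely $\varrho_{_0}, \varrho_{_1}$ and $\ind_{T_2(F,C)}^{O_2(F,C)}(\varrho_{_2})$; this is what ``glued from'' means.

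The key computational steps, in order, are: (i) fix convenient coset representatives $w \in Sp_4(F)$ for $P\backslash Sp_4/Q$ and compute, for each, the groups $N^w := N \cap w U w^{-1}$, $M^w := M \cap wLw^{-1}$, and the relevant unipotent pieces, together with the restriction of $\psi_{_C}$ to $N^w$; (ii) determine, orbit by orbit, whether $\psi_{_C}|_{N^w}$ is trivial (yielding an ordinary Jacquet module of $\tau$ or of $\eta\otimes\tau$) or non-trivial (yielding a twisted Jacquet module $r_{N_{1,1},\psi_\gamma}(\tau)$, which by the recalled fact is either zero or the central character $\omega_\tau$ of $\tau$); (iii) track the modulus characters: the normalized induction on $Sp_4$, the Jacquet normalization, and the non-unimodularity of $O_2(F,C)$ (whose modular character $\delta_{O_2(F,C)}$ is given explicitly in the excerpt) combine to produce the stated $\delta_{O_2(F,C)}^{1/2}$ twists in $\varrho_{_0}$ and $\varrho_{_1}$ and the $\eta$ versus $\eta^{-1}$ discrepancy between $\varrho_{_0}$ and $\varrho_{_2}$; (iv) identify the stabilizer groups: on the pieces where $N^w$ exhausts $N$ in the $\psi_{_C}$-direction we land in $T_2(F,C)$ and must inflate along $O_2(F,C)\twoheadrightarrow T_2(F,C)$ (using $O_2(F,C)=T_2(F,C)\ltimes\overline{N_{1,1}}$), whereas on the open-type coset the $\overline{N_{1,1}}$-directions survive and we genuinely induce from $T_2(F,C)$ up to $O_2(F,C)$, producing $\ind_{T_2(F,C)}^{O_2(F,C)}(\varrho_{_2})$; (v) check that no further twisted-Jacquet reduction kills a piece, and order the filtration so the exact sequence is as claimed.

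I expect the main obstacle to be step (iii) combined with the precise matching in step (i)–(iv): getting all the modulus characters to cancel correctly is delicate because there are three normalizations in play (normalized parabolic induction on $Sp_4(F)$, normalized Jacquet functors, and the non-unimodular group $O_2(F,C)$), and a sign or a factor of $\delta^{1/2}$ in the wrong place would change $\eta$ to $\eta^{-1}$ or misplace a piece in the filtration. A secondary subtlety is verifying that the restriction of $\rho=\eta\otimes\tau$ and of the various Jacquet modules of $\tau$ to the arithmetic subgroups $\{\pm 1\}$ and $F^\times$ sitting inside $L = F^\times \times SL_2(F)$ interact correctly with the structure $O_2(F,C)=T_2(F,C)\ltimes\overline{N_{1,1}}$; in particular one must be careful that the center of $SL_2(F)$ (where $\omega_\tau$ lives) is identified with the $\{\pm1\}$-factor of $T_2(F,C)$ in a way consistent with the embedding $L \hookrightarrow M = GL_2(F)$ coming from the two parabolics sharing the ambient group. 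Everything else — the enumeration of double cosets, the orbit analysis of $\psi_{_C}$, and the final assembly into a filtration — should be routine once these normalization and identification issues are nailed down, and it will run parallel to the proof of Theorem~\ref{SiegelcaseforDegA}.
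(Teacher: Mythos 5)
Your overall framework---apply the Bernstein--Zelevinsky geometric lemma, determine on which orbits $\psi_{_C}$ survives, track the modulus normalizations, and distinguish the cosets where one inflates along $O_2(F,C)\twoheadrightarrow T_2(F,C)$ from the open coset where one genuinely induces---is the right one and matches the paper. But the very first step is stated incorrectly in a way that would derail the computation if taken literally: you propose to enumerate $P\backslash Sp_4(F)/Q$, whereas the geometric lemma for the functor $r_{N,\psi_{_C}}:\Alg(Sp_4(F))\to\Alg(M_{\psi})$ requires the double cosets $Q\backslash Sp_4(F)/S_{\psi}$, where $S_{\psi}=M_{\psi}N$ is the \emph{stabilizer} of $\psi_{_C}$ in $P$, not all of $P$. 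Since $\psi_{_C}$ is a rank-one (degenerate) character, $M_{\psi}\cong O_2(F,C)$ is a proper subgroup of $M=GL_2(F)$, and the refinement from $P$-orbits to $S_{\psi}$-orbits is nontrivial. Concretely, $P\backslash Sp_4(F)/Q$ has only \emph{two} elements (the $P$-orbits on $Q\backslash Sp_4(F)$ correspond to the two Bruhat cells $Orb_P(X_0)$ and $Orb_P(X_1)$), while $Q\backslash Sp_4(F)/S_{\psi}$ has \emph{four}, namely $\{I_4,\tau_1^{-1},\sigma_1^{-1},(\tau_1\sigma_1)^{-1}\}$ (Proposition \ref{S-psi-orbit-Sp4/Q} in the paper). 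The two elements $I_4$ and $\tau_1$ lie in the same $(P,Q)$-coset but contribute $0$ and $\varrho_{_0}$ respectively, and likewise $\sigma_1$, $\tau_1\sigma_1$ lie in the same $(P,Q)$-coset but contribute $\varrho_{_1}$ and $\ind_{T_2(F,C)}^{O_2(F,C)}(\varrho_{_2})$ respectively; the coarser enumeration cannot see these splittings. You do allude to the right idea in your step (iv), so this may be a matter of misstating the index set rather than a misconception, but the plan as written would not produce the three-piece filtration. The paper carries out exactly this refinement by first computing $P$-orbits on $Q\backslash Sp_4(F)$ (Propositions \ref{PdoublecosetGeneral}, \ref{StabilizerfororbitsGeneral}) and then breaking each into $S_{\psi}$-orbits via the study of $O_2(F,C)$ acting on the flag variety of $GL_2(F)$ (Proposition \ref{O2C-acts-GmodBbar}, Lemmas \ref{Spsi-OrbitinKlingenDegA}, \ref{SpsiOrbitNontrivialKlingenGegA}).

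A secondary point: you attribute the $\eta$-versus-$\eta^{-1}$ discrepancy between $\varrho_{_0}$ and $\varrho_{_2}$ to the interplay of modulus normalizations. In fact it comes from the Weyl element, not the normalization: $\varrho_{_0}$ is produced by the coset of $\tau_1$ and $\varrho_{_2}$ by that of $\tau_1\sigma_1$, and the extra conjugation by $\sigma_1$ inverts the $F^{\times}$-coordinate of the torus (Lemma \ref{secondobservation}, parts (2) and (3)). The modulus characters are responsible only for the $\delta_{O_2(F,C)}^{1/2}$ twists. Getting this attribution right matters for carrying out your step (iii) correctly.
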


The following corollary of Theorem \ref{KlingencaseforDegA}   describes the finer structure of the twisted Jacquet module with respect to the inducing data $\tau.$

\begin{corollary}\label{Cortoklingen}
	Fix a non-trivial character $\psi_{_0}$ of the additive group $F.$ Let $Q = L U$ denote the Klingen parabolic subgroup of the symplectic group $Sp_{4}(F).$ Suppose $\varrho = \eta \otimes \tau \in \Irr(L)$ where $\eta$ is a character of $F^{\times}$ and  $\tau \in \Irr(SL_{2}(F)).$ Denote $\Pi = \ind_{Q}^{Sp_{4}(F)}\left(\varrho\right).$ Let $\psi_{_C}$ be the character of $N$ defined by \eqref{CharacterCorrspondngtoC}. Let $\omega_{\tau}$ denote the central character of $\tau.$ Define $\varrho_{_{0}}$ and $\varrho_{_{2}}$ as follows: $\varrho_{_{0}}$ is the representation of $O_{2}(F,C)$ obtained from the representation $ \delta_{_{O_{2}(F,C)}}^{\frac{1}{2}} \cdot (\omega_{\tau} \otimes \eta)$ of $T_{2}(F,C)$ via composing with the quotient map and $\varrho_{_{2}}$  is the representation $\omega_{\tau} \otimes \eta^{-1} $  of $T_2(F,C).$ Let $\varrho_{_1}$ be the representation of $O_2(F,C)$ obtained from the representation $\delta_{_{O_{2}(F,C)}}^{\frac{1}{2}} \cdot (\eta_{|_{\{\pm 1\}}}\otimes r_{N_{1,1}}(\tau) )$ of $T_{2}(F,C)$ via composing with the quotient map. The following statements hold:
	
	\begin{enumerate}

		\item If $r_{N_{1,1},\psi_{\gamma}}(\tau)=0$ and $\tau$ is cuspidal,  $r_{N,\psi_{_C}}(\Pi)=0.$
		
		\item If $r_{N_{1,1},\psi_{\gamma}}(\tau)=0$ and $\tau$ is not cuspidal, $r_{N,\psi_{_C}}(\Pi)= \varrho_{_1}.$ In particular, if $\tau$ is one dimensional, $r_{N,\psi_{_C}}(\Pi)=\varrho_{_1}.$
		
		\item Assume that $\tau$ is cuspidal and $r_{N_{1,1},\psi_{\gamma}}(\tau)\neq 0.$ Then, $r_{N,\psi_{_C}}(\Pi)$ sits in the following short exact sequence of $O_{2}(F,C)$-modules:
		\begin{equation*}
		0\to  \ind_{T_{2}(F,C)}^{O_{2}(F,C)}(\varrho_{_{2}} ) \to r_{N,\psi_{_C}}(\Pi)\to  \varrho_{_{0}} \to 0.
		\end{equation*}
		
		\item Assume that $\tau$ is not cuspidal and $r_{N_{1,1},\psi_{\gamma}}(\tau)\neq 0.$ Then, $r_{N,\psi_{_C}}(\Pi)$ is given by a filtration
		\begin{equation*}
		\{0\} \subset W_2\subset W_1\subset W_0=r_{N,\psi_{_C}}(\Pi)
		\end{equation*}
		where $W_0/W_1=  \varrho_{_{0}},$ $W_1/W_2=  \varrho_{_1}$ and $W_2=\ind_{T_{2}(F,C)}^{O_{2}(F,C)}(\varrho_{_{2}}).$
		
	\end{enumerate}

\end{corollary}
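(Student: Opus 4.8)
The plan is to deduce all four cases directly from Theorem \ref{KlingencaseforDegA} by specialising its three building blocks $\varrho_{_0}$, $\varrho_{_1}$ and $\ind_{T_{2}(F,C)}^{O_{2}(F,C)}(\varrho_{_2})$, using two standard facts about the Jacquet modules of $\tau \in \Irr(SL_{2}(F))$. First, as recalled before Theorem \ref{SiegelcaseforDegA}, whenever $r_{N_{1,1},\psi_{\gamma}}(\tau) \neq 0$ it is one dimensional and equals the central character $\omega_{\tau}$ viewed as a representation of $\{\pm 1\}$; hence in that situation $\varrho_{_0}$ is the representation attached to $\delta_{_{O_{2}(F,C)}}^{\frac{1}{2}} \cdot (\omega_{\tau} \otimes \eta)$ and $\varrho_{_2} = \omega_{\tau} \otimes \eta^{-1}$, which is precisely how $\varrho_{_0}$ and $\varrho_{_2}$ are redefined in the statement of Corollary \ref{Cortoklingen}. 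Second, if $\tau$ is cuspidal then its ordinary Jacquet module $r_{N_{1,1}}(\tau)$ vanishes, so $\varrho_{_1} = 0$; and if $\tau$ is one dimensional then $r_{N_{1,1},\psi_{\gamma}}(\tau) = 0$ while $r_{N_{1,1}}(\tau) \neq 0$, so $\tau$ is not cuspidal, which places the ``in particular'' clause under case (2).

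With these identifications in hand, each case is obtained by discarding the subquotients that vanish and collapsing the resulting filtration from Theorem \ref{KlingencaseforDegA}. In case (1), $r_{N_{1,1},\psi_{\gamma}}(\tau) = 0$ kills both $\varrho_{_0}$ and $\varrho_{_2}$, while cuspidality of $\tau$ kills $\varrho_{_1}$, so all three building blocks vanish and $r_{N,\psi_{_C}}(\Pi) = 0$. In case (2), $r_{N_{1,1},\psi_{\gamma}}(\tau) = 0$ again kills $\varrho_{_0}$ and $\varrho_{_2}$, but $\tau$ not cuspidal forces $r_{N_{1,1}}(\tau) \neq 0$ so that $\varrho_{_1}$ survives; hence $r_{N,\psi_{_C}}(\Pi) = \varrho_{_1}$, and the one dimensional subcase follows from the first paragraph. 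In case (3), $\varrho_{_1} = 0$ by cuspidality while $\varrho_{_0}$ and $\varrho_{_2}$ are nonzero and take the displayed form, so the gluing of Theorem \ref{KlingencaseforDegA} degenerates to a two-term filtration, i.e.\ a short exact sequence. In case (4) all three building blocks are nonzero, producing the full three-step filtration $\{0\} \subset W_{2} \subset W_{1} \subset W_{0} = r_{N,\psi_{_C}}(\Pi)$.

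The one point that is not purely formal is the \emph{order} of the gluing --- which building block is a submodule and which a quotient --- since Theorem \ref{KlingencaseforDegA} is phrased only as ``glued from''. I would read this off from the proof of Theorem \ref{KlingencaseforDegA} itself: there $r_{N,\psi_{_C}}(\Pi)$ is filtered according to the $P$-orbits on the relevant flag variety (the geometric lemma applied to the $(P,Q)$-double cosets), and that construction yields the subquotients in a definite order, with $\ind_{T_{2}(F,C)}^{O_{2}(F,C)}(\varrho_{_2})$ at the bottom, $\varrho_{_1}$ in the middle and $\varrho_{_0}$ as the top quotient. Granting this ordering, case (3) gives the short exact sequence $0 \to \ind_{T_{2}(F,C)}^{O_{2}(F,C)}(\varrho_{_2}) \to r_{N,\psi_{_C}}(\Pi) \to \varrho_{_0} \to 0$, and case (4) gives $W_{2} = \ind_{T_{2}(F,C)}^{O_{2}(F,C)}(\varrho_{_2})$, $W_{1}/W_{2} = \varrho_{_1}$ and $W_{0}/W_{1} = \varrho_{_0}$, exactly as asserted. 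The main obstacle is therefore bookkeeping: matching the surviving strata in each degenerate case to the correct sub/quotient slots, and checking that collapsing the filtration neither creates nor destroys an extension --- in cases (1), (2) and (4) the assertion concerns only the associated graded, and in case (3) the short exact sequence is not claimed to split.
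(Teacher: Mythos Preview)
Your approach is essentially the same as the paper's: both deduce the four cases from Theorem \ref{KlingencaseforDegA} by specialising the three pieces $\varrho_{_0}$, $\varrho_{_1}$, $\ind_{T_{2}(F,C)}^{O_{2}(F,C)}(\varrho_{_2})$ according to whether $r_{N_{1,1},\psi_{\gamma}}(\tau)$ and $r_{N_{1,1}}(\tau)$ vanish, and cases (1) and (2) are handled identically.

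The one point where you are vaguer than the paper is precisely the one you flag: the ordering of the filtration. You propose to read it off from the proof of Theorem \ref{KlingencaseforDegA}, but that proof only establishes that $r_{N,\psi_{_C}}(\Pi)$ is \emph{glued from} the three pieces, without fixing the order. The paper establishes the order in the proof of the corollary itself, via the two-step process: first Proposition \ref{StabilizerfororbitsGeneral} (with $n=2$, $k=1$) gives a filtration $0 \subset \Pi_{1} \subset \Pi_{0} = \Pi_{|_{P}}$ of $P$-modules coming from the two $P$-orbits on $Q\backslash Sp_{4}(F)$, and exactness of $r_{N,\psi_{_C}}$ yields the short exact sequence $0 \to r_{N,\psi_{_C}}(\Pi_{1}) \to r_{N,\psi_{_C}}(\Pi) \to \varrho_{_0} \to 0$ (using Lemma \ref{Spsi-OrbitinKlingenDegA} and $\Phi_{I_4}=0$). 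Then, within the open $P$-stratum, Lemma \ref{SpsiOrbitNontrivialKlingenGegA} and Remark \ref{open-orbits}(2) identify $Orb_{S_{\psi}}(\tau_{1}\sigma_{1}\cdot X_{0})$ as an open $S_{\psi}$-orbit in $Q\backslash Sp_{4}(F)$, so the second assertion of \cite[Theorem 5.2]{BZ2} forces $\Phi_{\tau_{1}\sigma_{1}}(\varrho) = \ind_{T_{2}(F,C)}^{O_{2}(F,C)}(\varrho_{_2})$ to sit as a subrepresentation of $r_{N,\psi_{_C}}(\Pi_{1})$, with quotient $\Phi_{\sigma_{1}}(\varrho) = \varrho_{_1}$. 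Your instinct about where the ordering comes from is correct, but the actual argument requires these explicit open-orbit identifications rather than a reference back to the theorem's proof.
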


In this paragraph, we make few pertinent remarks on the structure of our computation. Let $S_{\psi}$ denote the stabilizer of the character $\psi$ in $P.$  The general structure of the twisted Jacquet module of a principal series representation can be obtained via  Geometric Lemma. However, to apply the Geometric Lemma, we have adopted a two step process. We first restrict a principal series representation to the Siegel parabolic $P$ and using the inducing subgroups therein, the orbits for the action of $S_{\psi}$ on $P \backslash Sp_{4}(F)$ and $Q \backslash Sp_{4}(F)$ are determined. These orbits are relevant for applying the Geometric Lemma and are achieved via Propositions \ref{S-psi-orbit-Sp4/P} and \ref{S-psi-orbit-Sp4/Q} using Lemma \ref{hijgj}, Propositions \ref{Pdoublecoset} and \ref{PdoublecosetGeneral}. With Propositions \ref{S-psi-orbit-Sp4/P} and \ref{S-psi-orbit-Sp4/Q} in place, we verify one technical condition, namely, decomposability of certain subgroups. This is achieved in Section \ref{Decomposability} via Lemmas \ref{Decomposability-verification-Siegel} and \ref{Decomposability-verification-Klingen}. The second step consists of applying the Geometric lemma and the finer structure of the twisted Jacquet modules are obtained using some standard arguments. A two step process is also adopted in \cite[\S 2, Theorem 2]{DPDegTIFR}, \cite[\S 5]{Brooks-Schmidt} and \cite[\S 7, Propositon 7.1]{GS}, although the second step in these works are distinct from the second step of our approach. To add further, in the above mentioned articles, the case of  a non-degenerate character is undertaken in contrast to the degenerate character that we have taken in our case.   

Before we end the introduction, we shall outline the contents of this article. In Section \ref{notation}, we set up notations that will be used throughout in the article. We prove some preparatory results in Section \ref{prelims} that will be used in the proof of our main theorems. We also review the Geometric lemma of Bernstein-Zelevinsky in this section. In Section \ref{Orbits-Spsi-action}, we obtain the double cosets relevant for the computation and verify the decomposability conditions. Following this, in  Section \ref{DegenerateCharacter}, we prove the main theorems in the $p$-adic field case. Finally, in Section \ref{finite-field-case}, we prove the analogue of our main results in the case where the groups are defined over a finite field.

\section{Notation}\label{notation}

Let $F$ be a non-archimedean local field or a finite field. We will assume throughout the article that the characteristic of $F$ is not equal to $2$.
Denote by $M_{m,n}(F)$ the $F$-vector space of all $m\times n$ matrices over $F.$ If $m=n,$ we shall denote $M_{n,n}(F)$ simply by $M_n(F).$ By $SM_n(F),$ we mean the subspace of $M_n(F)$ consisting of all symmetric $n\times n$ matrices. As usual, $GL_{n}(F)$ shall denote the group of all invertible $n\times n$ matrices over $F$ and $SL_{n}(F)$ denotes its subgroup consisting of elements with determinant one. Given any $x\in M_{m,n}(F),$ we shall denote its transpose by $~^{t}x.$ Given any vector $v$ in $F^{n},$ we think of $v$ as a column vector when we have to represent it as a matrix. For $\alpha_1,\dots, \alpha_n\in F,$ we shall denote the diagonal $n\times n$ matrix with diagonal entries $\alpha_1,\dots, \alpha_n$ by $\diag(\alpha_1,\dots, \alpha_n).$  More generally,  if $g_{j}\in M_{n_j}(F)$ for $1\leq j\leq k,$  we shall denote by $\diag(g_1,\dots, g_k)$ the block diagonal matrix with entries $g_1,\dots, g_k.$ 

Fix a positive integer $j$ such that $1<j<n.$ We shall denote by $P_{j,n-j}$ the standard maximal parabolic subgroup of $GL_n(F)$ corresponding to the partition $(j,n-j).$ Let $\overline{P_{{j, n-j}}}$ denote the parabolic subgroup of $GL_n(F)$ opposite to $P_{j,n-j}$ in $GL_{n}(F).$

\subsection{Generalities on $\ell$-groups}\label{Generalities}
We shall follow the terminology of \cite{BZ1} and \cite{BZ2}. Let $G$ be an $\ell$-group and $H$ be a closed subgroup of $G.$ Suppose $g\in G$ normalizes $H.$ Then, ${\rm mod}_H(g)$ is the module of the automorphism $h\mapsto ghg^{-1}$ of $H$ i.e., ${\rm mod}_H(g)$ is given by (see \cite[$\S 1$]{BZ2})
\begin{equation}\label{modular-character-defn}
	\displaystyle\int_H f(g^{-1}hg) d\mu(h)={\rm mod}_H(g) \displaystyle\int_H f(h) d\mu(h) 
\end{equation}
for all locally constant and compactly supported functions $f$ on $H$ where $d\mu$ denotes a left Haar measure on $H.$ Moreover, ${\rm mod}_H$ is a character of the normalizer of $H$ in $G.$ The character ${\rm mod}_G$ of $G$ is called the modular character of $G$ and will be denoted by $\delta_G.$

Given an $\ell$-group $G,$ we shall denote the 
category of smooth complex representations of $G$ by $\Alg(G).$  By $\Irr(G),$ 
we shall mean the set of irreducible admissible representations in $\Alg(G).$  For a closed subgroup $H$ of $G,$ the exact functors (\cite[\S 2.21 Remark]{BZ1}) of (normalized) smooth and compact induction (i.e., subrepresentation of smooth induction consisting of functions which are compactly supported modulo $H$) shall be denoted by $\Ind_H^G$ and $\ind_{H}^{G}$ respectively. The corresponding unnormalized inductions (\cite[\S 2.21]{BZ1}) shall be denoted by $I_H^{G}$ and $i_H^{G}$ respectively. We observe that $\ind_H^G(\rho)=i_H^G( \delta_{G}^{-\frac{1}{2}}\delta_H^{\frac{1}{2}} \cdot \rho)$ for any $\rho\in \Alg(H)$, the analogue of which holds for $\Ind_{H}^G$ also. 

\subsubsection{The functors $i_{_{\mathcal{U},\one}}$ and $r_{\mathcal{U},\theta}$}\label{BZinduction}

We recall two functors from  \cite[\S 1]{BZ2}. Let $\mathcal{U}$ be a closed subgroup of an $\ell$-group $G.$ Assume that $\mathcal{U}$ is a union of its open compact subgroups and  $\theta: \mathcal{U}\to \C^{\times}$ is a character.  Let $\mathcal{M}$ be a closed subgroup of $G$ such that $\mathcal{M}$ normalizes $ \mathcal{U}$ and $\theta.$  Assume also that  $\mathcal{M} \cap  \mathcal{U} = \{1\}$ and $\mathcal{P}= \mathcal{M}\mathcal{U}$ is closed in $G.$ We have a functor $i_{_{ \mathcal{U},\one}}:\Alg(\mathcal{M})\to \Alg(G)$ by \cite[\S 1.8]{BZ2}. Given any $(\rho,W)\in \Alg(\mathcal{M}),$ $i_{_{\mathcal{U},\one}}(\rho)$ is the representation of $G$ which acts by right translation on the vector space 

$$ \left\{f : G \to W ~\vline
\begin{array}{lll}
	(1) & f(mug) = {\rm mod}_{\mathcal{U}}^{\frac{1}{2}}(m)\rho(m)(f(g)),\ \forall m \in \mathcal{M}, \forall u\in \mathcal{U}, \
	\forall g \in G\\
	(2) & \mbox{$f(gk) = f(g)$ for all $g\in G,k \in K_f$ an open subgroup in $G$} \\
	(3) & f \mbox{ is compactly supported modulo } \mathcal{P}
\end{array}\right\}.$$ 
We note here from \cite[Proposition 29, Pg. 100]{Nachbin} that for a semi-direct product $\mathcal{P}=\mathcal{M}\mathcal{U}$ of locally compact groups, one has $\delta_{\mathcal{P}}(mu)=
{\rm mod}_{\mathcal{U}}(m)\delta_{\mathcal{M}}(m)\delta_{\mathcal{U}}(u)$ for each $m\in \mathcal{M}$ and $u\in \mathcal{U}.$ If $G$ is the $F$-points of a connected reductive group and $\mathcal{P}= \mathcal{M}\mathcal{U}$ is a parabolic subgroup of $G$ with Levi subgroup $\mathcal{M}$ and unipotent radical $\mathcal{U}$ then  ${\rm mod}_{\mathcal{U}}=\delta_{\mathcal{P}}.$ Consequently, $i_{_{\mathcal{U},\one}}(\rho)=\ind_{\mathcal{P}}^G(\rho)$ (normalized parabolic induction), for any $\rho\in \Alg(\mathcal{M})$ regarded as a representation of $\mathcal{P}$ whose action on $\mathcal{U}$ is trivial.

For $(\pi, V)\in \Alg(G),$  let $V(\mathcal{U},\theta)$ denote the span of $\{\pi(u)v-\theta(u)v: u\in  \mathcal{U}, v\in V\}.$ The group $\mathcal{M}$ acts on the quotient 
$V_{ \mathcal{U},\theta}:=V/V( \mathcal{U},\theta)$ as follows: for $m\in \mathcal{M}$ and $v\in V,$
$m\cdot (v+V( \mathcal{U},\theta))={\rm{mod}}_{ \mathcal{U}}^{-\frac{1}{2}}(m)\pi(m)v+V( \mathcal{U},\theta).$ This representation is denoted by $r_{ \mathcal{U},\theta}(\pi)$ and is called the {\it twisted Jacquet module} of $\pi.$  One thus obtains the (normalized) twisted Jacquet functor $r_{ \mathcal{U},\theta}:\Alg(G)\to \Alg(\mathcal{M}).$ If $\theta= \one,$ we shall denote $r_{ \mathcal{U}, \one}$ by $r_ \mathcal{U}$ (called the Jacquet functor). The twisted Jacquet functor is exact.

\begin{remark}\label{NormalizationFactor} 
The following two observations (see \cite[Pg. 445]{BZ2}) shall be used later.
\begin{enumerate}
\item If $\mathcal{U} = \{1\},$ then for any $\rho\in \Alg(\mathcal{M})$ we have $i_{_{\mathcal{U},\one}}(\rho) = i_{\mathcal{M}}^{G}(\rho),$ usual unnormalized induction.  Moreover, if $\mathcal{M} \mathcal{U} = G$ (i.e., $G=\mathcal{P}$)  then  $i_{_{ \mathcal{U},\one}}(\rho) =  {\rm{mod}}_{ \mathcal{U}}^{\frac{1}{2}}\cdot \rho $ regarded as a representation of $\mathcal{P}$ with the action of $\mathcal{U}$ being trivial, for any $\rho\in \Alg(\mathcal{M}).$

		\item If $\mathcal{U} = \{1\},$ then for any $\pi\in \Alg(G),$ $r_{\mathcal{U}, \one}(\pi)$ is the ordinary restriction of representations.
		\end{enumerate}
	 	
\end{remark}

\subsection{Symplectic group and its maximal parabolic subgroups}\label{maximalparabolicsinSp}

For a finite dimensional non-degenerate symplectic vector space $(V,B)$ over a field, let $Sp(V)$ denote the subgroup of $GL(V)$ preserving the alternating form $B.$ 

We refer the reader to \cite[\S 8.1]{PR} and \cite[\S 3]{TA} for comparison with this section. Let $V$ denote the $2n$-dimensional $F$-vector space $F^{2n}.$ Let $B_J$ 
denote the alternating non-degenerate bilinear form on $V$ given by the 
$2n\times 2n$ alternating matrix $J=\left[\begin{array}{cc}
	0&I_{n}  \\
	-I_{n}&0 
\end{array}
\right],$ i.e., 
$B_J(v,w)=~^tv J w$ for all $v,w\in V.$ Let $Sp_{2n}(F):=\{g\in GL_{2n}(F): ~^tg J g=J \}$, the subgroup of $GL_{2n}(F)$ preserving the alternating form $B_J.$  For $A, B, C, D \in M_n(F),$ put the condition
\begin{equation}\label{MatrixDef}
	~^tAC = ~^tCA, ~^tBD = ~^tDB, ~ ^tDA - ~^tBC = I_n.
\end{equation}  
One then has, $Sp_{2n}(F) = \left\{ \left[\begin{array}{cc}
		A & B \\
		C & D
	\end{array}\right]\in GL_{2n}(F) : A, B, C, D \in M_n(F) \text{ such that } \eqref{MatrixDef}  \text{ holds} \right\}.$

We take $Sp_{0}(F)$ to be the trivial group. Let $\{e_1,\dots, e_{2n}\}$ denote the standard basis of $V.$  We recall that a subspace $Y$ of $V$ is said to be {\it totally isotropic} if $B_J(v,w)=0$ for all $v,w\in Y.$ It is a standard fact that any maximal totally 
isotropic subspace of $V$ is of dimension $n.$ Let $V_{0}$ denote the maximal 
totally isotropic subspace $\langle e_1,\dots, e_n \rangle$ of $V.$  The dual space of $V_{0}$ is  $V_{0}^{\vee}=\langle e_{n+1}, 
	\dots, e_{2n}\rangle.$ In particular, the dual basis $\{{e_1^{\vee}, \dots, 
		e_n^{\vee}}\}$ of the basis $\{e_1,\dots, e_n\}$ of $V_{0}$ is given by  
	$e_i^{\vee}=e_{n+i}$ for $1\leq i \leq n.$ With this identification of the dual space $V_{0}^{\vee},$ 
	one has the direct sum  $V=\langle e_1,\dots, e_n \rangle  \oplus \langle 
	e_1^{\vee},\dots, e_n^{\vee} \rangle =V_{0}\oplus V_{0}^{\vee}.$ 
	
	Fix a positive integer $k$ such that $1 \leq k \leq n$ and let $\bm{\Lambda}(k)$ denote the set of all $k$-dimensional totally isotropic subspaces of $V.$ The natural action of the symplectic group $Sp_{2n}(F)$ on $\bm{\Lambda}(k)$ given by $g \cdot X = g(X)$ for $g \in Sp_{2n}(F)$ and $X \in \bm{\Lambda}(k)$ is transitive. For a fixed $k\in \{1,\dots,n\},$ consider the totally isotropic subspace $ X_0 := \langle e_1, \dots, e_k \rangle$ in $\bm{\Lambda}(k).$  The stabilizer $P_{k}$ of $X_0$ in $Sp_{2n}(F)$ is a maximal parabolic subgroup of $Sp_{2n}(F).$ The natural action of $Sp_{2n}(F)$ on the space $P_k \backslash Sp_{2n}(F)$ defined by $g \cdot P_kh = P_kh g^{-1}$ where $g, h \in Sp_{2n}(F)$ is transitive and we identify  $P_k \backslash Sp_{2n}(F)$ and $\bm{\Lambda}(k)$ as 
$Sp_{2n}(F)$-spaces via the map $P_{k} g \mapsto g^{-1}(X_0).$

Fix $1\leq k\leq n.$ If $k<n,$ we have the Levi decomposition of $P_k$ given by
$P_k = M_k N_k$ where the Levi subgroup $M_k$ is isomorphic to $GL_{k}(F) \times Sp_{2n-2k}(F)$ which is embedded in $Sp_{2n}(F)$ via
\begin{equation}\label{embedingofleviSp_{2n}(F)}
\left(g, \left[\begin{array}{cc}
a & b \\
c & d
\end{array}\right] \right) \mapsto \left[\begin{array}{cccc}
g &  &  &  \\
& a &  & b \\
&  & ~^tg^{-1} &  \\
& c &  & d 
\end{array}\right].
\end{equation}
The unipotent radical $N_k$ of $P_k$ consists of matrices of the form 
$  \left[\begin{array}{cccc}
I_k & * & * & * \\
& I_{n-k} & * &  \\
&  & I_k &  \\
&  & * & I_{n-k} 
\end{array}\right].$ The precise relation among the blocks are described explicitly in \S \ref{Struture-of-N_k}. If $k=n,$ $P_n$ has the Levi decomposition $P_n=M_nN_n,$ where the Levi subgroup $M_n$ is isomorphic to $GL_n(F)$ which is embedded in $Sp_{2n}(F)$ via 
\begin{equation}\label{embedingofleviSp_{2n}(F)-Siegel}
g \mapsto \left[\begin{array}{cc}
g &  \\
& ~^tg^{-1}  \\
\end{array}\right].\end{equation}
The unipotent radical $N_n$ in this case is isomorphic to $SM_n(F).$

\subsubsection{Structure of $P_{k}$}

Suppose  $ a, b, c, d \in M_{n-k}(F), g\in GL_k(F), h, y \in M_k(F),  u, w \in M_{n-k,k}(F),$ $x, z, v, m \in M_{k, n-k}(F).$ We put two conditions as follows:

\begin{equation}\label{ConditionOnPk1}
\left[\begin{array}{cc}
a & b \\
c & d
\end{array}\right] \in Sp_{2n-2k}(F) 
\end{equation}
and
\begin{equation}\label{ConditionOnPk2}
g^{-1}x + ~^tua - ~^twc = 0, g^{-1}z + ~^tub - ~^twd = 0, g^{-1}y + ~^tu w \in SM_k(F). 
\end{equation}
Suppose $\sigma = \left[\begin{array}{cccc}
g & x & y & z \\
& a & w & b \\
& v & h & m \\
&  c& u & d 
\end{array}\right] \in GL_{2n}(F).$ Note that $\sigma$ stabilizes $X_0.$ Moreover, $\sigma \in Sp_{2n}(F)$ if and only if the entries of $\sigma$ satisfy $~^tgh = I_k, v =0=m,$ \eqref{ConditionOnPk1} and \eqref{ConditionOnPk2}.
We then have,
$$P_{k} = \left\{\left[\begin{array}{cccc}
g & x & y & z \\
& a & w & b \\
&  & ^tg^{-1} &  \\
&  c& u & d 
\end{array}\right] \in GL_{2n}(F) : \begin{array}{llll}
g \in GL_{k}(F), \\
y \in M_k(F), \\
x, z \in M_{k, n-k}(F), \\
u, w \in M_{n-k,k}(F), \\
a, b, c, d \in M_{n-k}(F)
\end{array} \text{ such that } \eqref{ConditionOnPk1} \text{ and } \eqref{ConditionOnPk2} \text{ hold}  \right\}.$$

\subsubsection{Structure of $N_{k}$}\label{Struture-of-N_k}

Suppose  $ x, z \in M_{k, n-k}(F), y \in M_k(F)$ and $ u, w \in M_{n-k, k}(F).$ Assume $\sigma =\left[\begin{array}{cccc}
I_k & x & y & z \\
& I_{n-k} & w &  \\
&  & I_k &  \\
&  & u & I_{n-k} 
\end{array}\right] \in GL_{2n}(F).$ Then, $\sigma \in Sp_{2n}(F)$ if and only if $w = ~^tz, u = -^tx$ and $y - x ~^tz \in SM_k(F).$ Then,
$$N_{k} = \left\{\left[\begin{array}{cccc}
I_k & x & y & z \\
& I_{n-k} & ^tz &  \\
&  & I_k &  \\
&  & -^tx & I_{n-k} 
\end{array}\right] \in GL_{2n}(F): \begin{array}{lll}
x \in M_{k, n-k}(F),\\
y \in M_k(F), \\
z \in M_{k, n-k}(F)
\end{array} \text{ such that } y - x ~^tz \in SM_k(F) \right\}.$$

\subsubsection{The Siegel parabolic subgroup}\label{SiegelparabolicSp_{2n}(F)}

If $k=n,$ we shall denote $P_{n}$ simply by $P.$ The maximal parabolic subgroup $P$ is called the {\it{Siegel parabolic}} subgroup of $Sp_{2n}(F).$ We shall also denote $M_{n}$ and $N_{n}$ simply by $M$ and $N$ respectively. Explicitly, 
\begin{equation*} P=\left\{\left[ \begin{array}{cc}
g&X  \\
0& ^tg^{-1} 
\end{array}
\right]: g\in GL_{n}(F), X\in M_n(F) \text{ and } g^{-1}X \in SM_n(F) \right\},
\end{equation*}
\begin{equation*}
M= \left\{\left[ \begin{array}{cc}
g& 0  \\
0& ^tg^{-1} \
\end{array}
\right]: g\in GL_{n}(F)\right\} \cong GL_n(F)
\end{equation*}
and
\begin{equation*}N=\left\{\left[ \begin{array}{cc}
I_{n}&X  \\
0& I_{n} \
\end{array}
\right]: X\in SM_n(F)\right\} \cong SM_n(F).\end{equation*}

\subsubsection{The Klingen parabolic subgroup}

If $k=1,$ we shall denote $P_{1}$  by $Q, M_1$ by $L$ and $N_1$ by $U.$  The maximal parabolic subgroup $Q$ is called the {\it{Klingen parabolic}} subgroup of $Sp_{2n}(F).$ To describe the elements of $Q,$ suppose $t \in F^{\times}, x,z \in M_{1, n-1}(F),u,w \in M_{n-1,1}(F)$ and $a,b,c,d \in M_{n-1}(F).$ 
Put the following condition:

\begin{equation}\label{ConditionOnKlingenParabolicforSp_2n}
t^{-1}x+~^tua = ~^twc, t^{-1}z+~^tub = ~^twd \text{ and } \left[\begin{array}{cc}
a & b \\
c & d 
\end{array}\right] \in Sp_{2n-2}(F).
\end{equation}
Then,
\begin{equation*}
Q = \left\{\left[\begin{array}{cccc}
t & x & y & z \\
& a & w & b \\
&  & t^{-1} &  \\
&  c& u & d 
\end{array}\right] : \begin{array}{ll}
t \in F^{\times}, y \in F,x,z \in M_{1, n-1}(F),\\
u,w \in M_{n-1,1}(F),a,b,c,d \in M_{n-1}(F)
\end{array} \text{ such that }\eqref{ConditionOnKlingenParabolicforSp_2n}  \text{ holds}\right\},
\end{equation*}

\begin{equation*}
L = \left\{\left[\begin{array}{cccc}
t &  &  &  \\
& a &  & b \\
&  & t^{-1} &  \\
& c &  & d 
\end{array}\right] : t \in F^{\times}, \left[\begin{array}{cc}
a & b \\
c & d 
\end{array}\right] \in Sp_{2n-2}(F)\right\} \cong F^{\times} \times Sp_{2n-2}(F)
\end{equation*} 
and
\begin{equation*}
U = \left\{\left[\begin{array}{cccc}
1 & x & y & z \\
& 1 & ^tz &  \\
&  & 1 &  \\
&  & -^tx & 1 
\end{array}\right] : y \in F,x,z \in M_{1, n-1}(F) \right\}.
\end{equation*}

For any abelian $\ell$-group $G,$ we denote by $\widehat{G}$ the group of characters 
of $G$ with values in $\C^{\times}.$ Since $P$ normalizes $N,$ $P$ acts on $N$ and 
hence on the group $\widehat{N}$ naturally via $p\cdot  \psi= \psi^p$ where 
$\psi^{p}(n)=\psi(p^{-1}np)$ for all $p\in P,$ $\psi\in \widehat{N}$ and $n\in N.$ Let $S_{\psi}$ denote the subgroup 
of $P$ stabilizing the character $\psi$ of $N$ under this action. Put $M_{\psi}=S_{\psi} \cap M.$ Then, $N$ is a normal subgroup of $S_{\psi}$ and we have a semi-direct product 
$S_{\psi}=M_{\psi}N.$ Associated to any smooth representation $(\pi, V)$ of $Sp_{2n}(F)$ and a character $\psi$ of $N,$ the  twisted Jacquet module $r_{N,\psi}(\pi)$ is an $M_{\psi}$-module.

\section{Preliminary Results}\label{prelims}

We present some general results that we shall use in the article. In Section \ref{Linearalgebralemmas}, we recall some results from symplectic linear algebra. 
Even though we need the results only for $n=2,$ we have chosen to present the statements in general for the symplectic space $F^{2n}.$
The groups associated to a character of the unipotent radical $N$ of $P$ are described in Section \ref{Spsi-defn-etc}. The Geometric lemma of \cite{BZ2} applicable to our calculation is recalled in Section \ref{geometriclemma}. In Section \ref{setuplemma}, we record a generality so as to apply the geometric lemma later.

\subsection{Some general results}\label{Linearalgebralemmas}

Let $P$ be the Siegel parabolic subgroup of $Sp_{2n}(F).$ Recall from \S \ref{maximalparabolicsinSp} that  $P_k \backslash Sp_{2n}(F)$ is identified with $\bm{\Lambda}(k)$ by $P_{k} g \mapsto g^{-1}(X_0)$ where $g \in Sp_{2n}(F).$ The following lemma gives a complete set of orbit representatives for the action of $P$ on $P_k \backslash Sp_{2n}(F).$

\begin{lemma}\label{lemmafororbitsGeneral}
	
	Fix a positive integer $k$ such that $1 \leq k \leq n$ and let $\bm{\Lambda}(k)$ denote the set of all $k$-dimensional totally isotropic subspaces of the $2n$-dimensional symplectic $F$-vector space $V.$ Define $k+1$ totally isotropic subspaces in $\bm{\Lambda}(k)$ as follows:
	\begin{align*}
	X_{0}&=\langle e_1,\dots, e_k \rangle,\\
	X_j&=\langle e_1^{\vee}, \dots, e_{j}^{\vee}, e_{j+1}, \dots, e_k \rangle, \text{ for } 1\leq j\leq k-1, \text{ and }\\
	X_{k}&=\langle e_1^{\vee}, \dots, e_k^{\vee} \rangle.
	\end{align*}
	
	For the action of $P$ on $\bm{\Lambda}(k),$ there are exactly $k+1$  orbits whose representatives can be taken to be one of the points $X_{0},X_1,\dots, X_{k}.$ 
\end{lemma}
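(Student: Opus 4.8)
The plan is to classify $P$-orbits on $\bm{\Lambda}(k)$ by an invariant attached to each totally isotropic subspace $X$, namely the dimension of $X \cap V_0$ (equivalently the rank of the projection of $X$ onto $V_0^{\vee}$ along $V_0$). Since $P$ is the stabilizer of $V_0 = \langle e_1,\dots,e_n\rangle$, the quantity $j := \dim(X) - \dim(X \cap V_0) = k - \dim(X \cap V_0)$ is manifestly a $P$-invariant, and for the proposed representatives one checks directly that $\dim(X_j \cap V_0) = k - j$, so the $X_0,\dots,X_k$ lie in $k+1$ distinct orbits. It remains to show every $X \in \bm{\Lambda}(k)$ with this invariant equal to $j$ is $P$-conjugate to $X_j$; this is the heart of the argument.

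First I would set up the reduction: given $X \in \bm{\Lambda}(k)$ with $\dim(X \cap V_0) = k-j$, choose a basis $v_1,\dots,v_k$ of $X$ so that $v_{j+1},\dots,v_k$ span $X \cap V_0 \subseteq V_0$, and write $v_i = a_i + b_i$ with $a_i \in V_0$, $b_i \in V_0^{\vee}$; the vectors $b_1,\dots,b_j$ are then linearly independent in $V_0^{\vee}$. The key point is that $M \cong GL_n(F)$ acts on $V_0$ in the standard way and on $V_0^{\vee}$ by the contragredient, while $N$ acts trivially on $V_0$ and adds to each $b_i$ an element of $V_0$ depending symmetrically on the $a$-coordinates. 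Using the $GL_n(F)$-action I would first move $b_1,\dots,b_j$ to $e_1^{\vee},\dots,e_j^{\vee}$ (possible since they are independent), adjusting the $a_i$ accordingly; then use the $GL_n(F)$-action fixing $\langle e_1^\vee,\dots,e_j^\vee\rangle$-directions appropriately (together with the freedom to modify $v_i$ by the span of the later basis vectors) to normalize $a_{j+1},\dots,a_k$ to $e_{j+1},\dots,e_k$. Finally the isotropy condition $B_J(v_i,v_{i'}) = 0$ forces the remaining $a$-parts in the first $j$ vectors to satisfy a symmetry constraint, and I would invoke the $N$-action — whose defining symmetry condition ($y - x\,{}^tz \in SM_k(F)$ in the notation of \S\ref{Struture-of-N_k}) matches exactly this constraint — to kill those remaining components, arriving at $X_j$.

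The main obstacle I anticipate is the bookkeeping in this last normalization step: one must track carefully how the symmetric-matrix condition defining $N$ interacts with the isotropy condition on $X$, to be sure that the $N$-orbit is large enough to remove precisely the leftover coordinates and no residual invariant survives. A cleaner route to the same end may be the Witt-extension/transitivity philosophy: $Sp_{2n}(F)$ acts transitively on $\bm{\Lambda}(k)$, so orbits of $P$ on $\bm{\Lambda}(k)$ correspond to double cosets $P \backslash Sp_{2n}(F) / P_k$, and by symplectic Witt's theorem the relative position of two maximal-isotropic-adjacent flags $(V_0, X)$ is a complete invariant — which is again the single number $\dim(X \cap V_0)$. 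I would likely present the hands-on basis argument as the primary proof and remark on the Witt-theoretic interpretation. A final sentence would note that lower-dimensional examples ($k=n$, the Siegel case, and $k=1$, the Klingen case, both for $n=2$) recover the familiar small lists of double cosets used in the sequel.
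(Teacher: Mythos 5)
Your proposal is correct and conceptually the same as the paper's: the paper omits the proof, stating only that it is a standard application of Witt's theorem with $\dim(Y\cap V_0)$ as the complete $P$-invariant (which you also identify, phrased as $j=k-\dim(X\cap V_0)$). Your hands-on basis argument is a valid elaboration of what the paper leaves unsaid, and your closing remark about the Witt-theoretic reformulation matches the paper's intended justification exactly.
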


We will skip the proof of the lemma as it  is a standard application of Witt's theorem extending isometries. The point of the lemma is that if $Y\in \bm\Lambda(k),$ $Y\in Orb_P(X_j)$ if and only if $\dim(Y\cap V_0)=k-j.$ As a consequence to Lemma \ref{lemmafororbitsGeneral}, we have the following proposition which gives a complete set of $(P_k, P)$-double coset representatives in $Sp_{2n}(F).$

\begin{proposition}\label{PdoublecosetGeneral}
	Fix a positive integer $k$ such that $1 \leq k < n.$ Put $w_0=I_{2n},$ 
	$$w_j=\left[\begin{array}{cccccc}
	&  &  & -I_j & & \\
	& I_{k-j} &  &  & &\\
	&  & I_{n-k} & & & \\
	I_j &  &  &  & &\\  
	&  &  &  &I_{k-j}&\\  
	&  &  &  & &I_{n-k}  
	\end{array}\right], \text{ for }1 \leq j \leq k-1,$$ and $w_k=\left[\begin{array}{cccc}
	&  & -I_k &  \\
	& I_{n-k} &  &  \\
	I_k &  &  &  \\ 
	&  &  & I_{n-k}  
	\end{array}\right].$ Then, $\{{w_0}^{-1},\dots, {w_k}^{-1}\}$ is a complete set of $(P_k, P)$-double coset representatives in $Sp_{2n}(F).$ 
\end{proposition}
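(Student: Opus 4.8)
The statement asserts that $\{w_0^{-1},\dots,w_k^{-1}\}$ is a complete set of $(P_k,P)$-double coset representatives in $Sp_{2n}(F)$. Since $P_k\backslash Sp_{2n}(F)$ has been identified as an $Sp_{2n}(F)$-space with $\bm\Lambda(k)$ via $P_kg\mapsto g^{-1}(X_0)$, and under this identification the right $P$-action $P_kg\cdot p = P_k g p^{-1}$ corresponds to the natural $P$-action $Y\mapsto p(Y)$ on $\bm\Lambda(k)$, the $(P_k,P)$-double cosets are in bijection with the $P$-orbits on $\bm\Lambda(k)$. Lemma~\ref{lemmafororbitsGeneral} already tells us there are exactly $k+1$ such orbits, with representatives $X_0,X_1,\dots,X_k$. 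So the whole content of the proposition is: \emph{for each $j$, the element $w_j^{-1}$ maps the base point $P_k$ (i.e.\ the subspace $X_0$) to a point in the $P$-orbit of $X_j$} — equivalently $w_j^{-1}(X_0) = w_j^{-1}\langle e_1,\dots,e_k\rangle$ lies in $Orb_P(X_j)$.

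First I would record the $P$-action dictionary carefully: under $P_kg\mapsto g^{-1}(X_0)$, one checks that $P_k\,w_j^{-1}$ corresponds to the subspace $w_j(X_0)$ (or $w_j^{-1}(X_0)$, depending on the normalization of the right action — I would pin this down to match the convention $g\cdot P_k h = P_k h g^{-1}$ stated in \S\ref{maximalparabolicsinSp}), and that the full double coset $P_k w_j^{-1} P$ corresponds to the $P$-orbit of that subspace. Then I would simply compute $w_j(\langle e_1,\dots,e_k\rangle)$ from the displayed block form of $w_j$: for $1\le j\le k-1$ the matrix $w_j$ sends $e_1,\dots,e_j$ to $e_1^{\vee},\dots,e_j^{\vee}$ (up to sign, via the $I_j$ block in the lower-left) and fixes $e_{j+1},\dots,e_k$, so $w_j(X_0)=\langle e_1^\vee,\dots,e_j^\vee,e_{j+1},\dots,e_k\rangle = X_j$; likewise $w_0=I_{2n}$ gives $X_0$ and $w_k$ gives $\langle e_1^\vee,\dots,e_k^\vee\rangle = X_k$. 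Using the criterion noted after Lemma~\ref{lemmafororbitsGeneral} — that $Y\in Orb_P(X_j)$ iff $\dim(Y\cap V_0)=k-j$ — this is immediate, since $\dim(X_j\cap V_0)=k-j$ by construction. One should also verify $w_j\in Sp_{2n}(F)$ for each $j$ (a direct check with the form $J$), and that each $X_j$ is genuinely totally isotropic, though the latter is part of Lemma~\ref{lemmafororbitsGeneral}.

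Finally I would assemble: the $k+1$ elements $w_0^{-1},\dots,w_k^{-1}$ yield $k+1$ double cosets corresponding to the $k+1$ distinct orbits $Orb_P(X_0),\dots,Orb_P(X_k)$; distinctness of the orbits (hence of the double cosets) follows from the distinct values of $\dim(X_j\cap V_0)$, and completeness follows because Lemma~\ref{lemmafororbitsGeneral} says there are no other orbits. This gives the bijection between $P_k\backslash Sp_{2n}(F)/P$ and $\{w_0^{-1},\dots,w_k^{-1}\}$.

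**Main obstacle.** There is no deep obstacle here — the result is essentially a bookkeeping corollary of Lemma~\ref{lemmafororbitsGeneral}. The one genuinely error-prone step, which I would treat with care, is getting the identification $P_k\backslash Sp_{2n}(F)\leftrightarrow\bm\Lambda(k)$ and the side of the double coset exactly right: tracking whether $w_j^{-1}$ (rather than $w_j$) is the correct representative hinges on the inverse in the convention $P_kg\mapsto g^{-1}(X_0)$ and in $g\cdot P_kh=P_kh g^{-1}$. I would resolve this once at the start and then the block computations $w_j(X_0)=X_j$ are routine.
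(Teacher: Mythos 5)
Your proof is correct and takes essentially the same approach as the paper: both reduce the claim to Lemma~\ref{lemmafororbitsGeneral} via the identification of $(P_k,P)$-double cosets with $P$-orbits on $\bm\Lambda(k)$, and then verify $w_j(X_0)=X_j$ for each $j$.
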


\begin{proof}
	The proof follows from Lemma \ref{lemmafororbitsGeneral} as it is easy to verify that $w_j(X_{0}) = X_j$ for every $j \in \left\{0, 1, \dots, k\right\}.$
\end{proof}

The following proposition is also a consequence of Lemma \ref{lemmafororbitsGeneral} and  is the analogue of Proposition \ref{PdoublecosetGeneral} in the case where $k=n.$

\begin{proposition}\label{Pdoublecoset}
	Put $\sigma_0=I_{2n},$ $\sigma_n=\left[ \begin{array}{cc}
	& -I_{n}  \\
	I_{n}& 
	\end{array}
	\right]$ and $
	\sigma_j=\left[\begin{array}{cccc}
	&  & -I_j &  \\
	& I_{n-j} &  &  \\
	I_j &  &  & \\
	&  &  & I_{n-j} 
	\end{array}\right],$ for $1\leq j \leq n-1.$ Then, $\{ {\sigma_0}^{-1}, \dots, {\sigma_n}^{-1} \}$ is a complete set of $(P,P)$-double coset representatives of $Sp_{2n}(F).$
\end{proposition}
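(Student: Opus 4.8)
The plan is to deduce Proposition \ref{Pdoublecoset} from Lemma \ref{lemmafororbitsGeneral} exactly as Proposition \ref{PdoublecosetGeneral} was deduced, but now in the case $k = n$ where the Siegel parabolic $P = P_n$ plays both roles in the double coset space $P \backslash Sp_{2n}(F) / P$. Recall that $P \backslash Sp_{2n}(F)$ is identified with $\bm{\Lambda}(n)$, the variety of maximal ($n$-dimensional) totally isotropic subspaces, via $Pg \mapsto g^{-1}(X_0)$ where $X_0 = \langle e_1, \dots, e_n\rangle = V_0$. Under this identification, $(P,P)$-double cosets in $Sp_{2n}(F)$ correspond to $P$-orbits on $\bm{\Lambda}(n)$, and Lemma \ref{lemmafororbitsGeneral} (with $k = n$) tells us there are exactly $n+1$ such orbits, with representatives $X_0, X_1, \dots, X_n$ where $X_j = \langle e_1^{\vee}, \dots, e_j^{\vee}, e_{j+1}, \dots, e_n\rangle$.

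\textbf{Key steps.} First I would check that each $\sigma_j \in Sp_{2n}(F)$: the matrix $\sigma_n = \left[\begin{array}{cc} & -I_n \\ I_n & \end{array}\right]$ is the standard Weyl element and satisfies ${}^t\sigma_n J \sigma_n = J$ by direct computation, and similarly each $\sigma_j$ for $1 \leq j \leq n-1$ is a block permutation-type matrix that one verifies preserves the form $J$ (it swaps the first $j$ coordinates of the ``$e$'' block with the first $j$ coordinates of the ``$e^{\vee}$'' block, with a sign, leaving the remaining coordinates fixed — this is an isometry). Second, and this is the heart of the argument, I would verify that $\sigma_j(X_0) = X_j$ for each $j \in \{0, 1, \dots, n\}$. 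For $j = 0$ this is trivial since $\sigma_0 = I_{2n}$. For $\sigma_n$, we have $\sigma_n(e_i) = e_{n+i} = e_i^{\vee}$ for $1 \leq i \leq n$, so $\sigma_n(X_0) = \langle e_1^{\vee}, \dots, e_n^{\vee}\rangle = X_n$. For $1 \leq j \leq n-1$, reading off the columns of $\sigma_j$ one finds $\sigma_j(e_i) = e_{n+i} = e_i^{\vee}$ for $1 \leq i \leq j$ and $\sigma_j(e_i) = e_i$ for $j+1 \leq i \leq n$, hence $\sigma_j(X_0) = \langle e_1^{\vee}, \dots, e_j^{\vee}, e_{j+1}, \dots, e_n\rangle = X_j$, exactly as required. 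Third, I would invoke the dictionary between double cosets and orbits: since $w(X_0) = w(X_0)$ corresponds to the double coset $P w^{-1} P$ under $Pg \mapsto g^{-1}(X_0)$, and since $\{X_0, \dots, X_n\}$ is a complete irredundant set of $P$-orbit representatives on $\bm{\Lambda}(n)$ by Lemma \ref{lemmafororbitsGeneral}, it follows that $\{\sigma_0^{-1}, \dots, \sigma_n^{-1}\}$ is a complete set of $(P,P)$-double coset representatives in $Sp_{2n}(F)$. (One should note the orbits are distinct because, as remarked after Lemma \ref{lemmafororbitsGeneral}, $Y \in \mathrm{Orb}_P(X_j)$ iff $\dim(Y \cap V_0) = n - j$, and these dimensions are pairwise distinct.)

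\textbf{Main obstacle.} There is no serious obstacle here — the proposition is essentially a bookkeeping consequence of Lemma \ref{lemmafororbitsGeneral}. The only point requiring a little care is the explicit identification $\sigma_j(X_0) = X_j$, i.e., correctly reading the action of each block matrix $\sigma_j$ on the standard basis vectors and confirming the sign conventions are consistent with the chosen form $J$ and with the identification $e_i^{\vee} = e_{n+i}$; and making sure the bookkeeping of which identification ($Pg \mapsto g^{-1}(X_0)$ versus $g \mapsto g(X_0)$) sends $P w P$ to which orbit, so that the inverses $\sigma_j^{-1}$ (rather than the $\sigma_j$ themselves) appear in the final list. Both are routine matrix verifications, so the proof will be short.
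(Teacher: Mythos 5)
Your proof is correct and follows the paper's argument exactly: the paper also deduces the result from Lemma \ref{lemmafororbitsGeneral} (with $k=n$) by checking $\sigma_j(X_0)=X_j$ for each $j$, and you have simply spelled out the matrix verifications and the orbit-to-double-coset dictionary that the paper leaves implicit.
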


\begin{proof}
	
	In this case, $X_0= V_0$ and $X_j=\langle e_1^{\vee}, \dots, e_{j}^{\vee}, e_{j+1}, \dots, e_n \rangle,$ for $1\leq j\leq n.$ It is easy to verify that $\sigma_j(X_{0}) = X_j$ for every $j \in \left\{0, 1, \dots, n\right\}.$  
\end{proof}

\begin{remark}\label{open-orbits}
	\begin{enumerate}
		\item In the case where $n=k=2,$ the $P$-orbits on $P\backslash Sp_4(F)$ are given by $Orb_P(X_0),$ $Orb_P(X_1)$ and $Orb_P(X_2).$ Of these, $Orb_P(X_0)$ is a point which corresponds to the closed orbit and $Orb_P(X_2)$ corresponds to the open orbit.
		
		\item In the case where $n=2$ and $k=1,$ the $P$-orbits on $Q\backslash Sp_4(F)$ are $Orb_P(X_0)$ which is closed and $Orb_P(X_1)$ which is open.
	\end{enumerate}	
\end{remark}

\subsection{Stabilizers of the characters of the unipotent radical $N$ of $P$}\label{Spsi-defn-etc}

We fix a non-trivial additive character $\psi_{_0}: F\to \C^{\times}$ throughout 
this article. For the Siegel parabolic subgroup $P,$ its unipotent radical $N$ can be identified with 
$SM_n(F)$ via \S \ref{SiegelparabolicSp_{2n}(F)} and therefore the dual 
group $\widehat{N}$ is isomorphic to $SM_n(F),$ since $SM_n(F)$ is self-dual. Given 
any $A\in SM_n(F),$ we can define a character $\psi_{_A}:N\to \C^{\times }$ by 
\begin{equation*}\label{psiAdefn}\psi_{_A}\left( \left[ \begin{array}{cc}
I_{n}&X  \\
0& I_{n} \
\end{array}
\right]\right)=\psi_{_0}(\tr(AX))\end{equation*}
for each $X\in SM_n(F).$ The character $\psi_{_A}$ is said to be {\it non-degenerate} if $A$ is invertible and  {\it degenerate} otherwise. 

Let $W$ denote the $m$-dimensional $F$-vector space $F^{m}.$ Given any $A\in 
SM_m(F),$ let $B_{A}$ denote the symmetric bilinear form on $W$ defined by 
$B_A(x,y)=~^txAy$ for all $x,y\in W$ and let $O_{m}(F,A)$ denote the orthogonal 
subgroup of $GL_m(F)$ preserving the symmetric bilinear form defined by $B_A.$ 
Explicitly, $O_{m}(F,A)=\{g\in GL_m(F): ~^tg A g=A \}$ (see for instance \cite[Ch. 1, \S 2]{Scharlau}). With this notation, the explicit description of the elements of $S_{_{\psi_{_A}}}$ and $M_{\psi_{_A}}$ are noted in the following lemma.

\begin{lemma}\label{SpsiMpsi}
	Fix a non-zero matrix  $A \in SM_n(F).$ For the character $\psi_{_A}$ of $N$ we have,
	\begin{enumerate}
		\item 	$S_{\psi_{_A}} := Stab_P({\psi_{_A}}) = \left\{ \left[\begin{array}{cc}
		g & X \\
		0 & ^tg^{-1}
		\end{array}\right] \in P :g \in GL_n(F), X \in M_n(F) \text{ and }~ ^tg A g = A\right\}.$

		\item $M_{\psi_{_A}} := M \cap S_{\psi_{_A}} = \left\{ \left[\begin{array}{cc}
		g & 0 \\
		0 & ^tg^{-1}
		\end{array}\right] \in M :g \in GL_n(F) \text{ and }~ ^tg A g = A\right\}\cong  O_{n}(F, A).$

		\item $N$ is normal in $S_{\psi_{_A}}$ and $S_{\psi_{_A}}$ is a semi-direct product of $M_{\psi_{_A}}$ with $N.$ 
	\end{enumerate}
\end{lemma}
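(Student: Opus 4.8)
\textbf{Proof plan for Lemma \ref{SpsiMpsi}.}

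The plan is to work directly from the description of the Siegel parabolic $P = MN$ given in \S\ref{SiegelparabolicSp_{2n}(F)} and the definition of the conjugation action of $P$ on $\widehat N$. First I would write a general element of $P$ as $p = \left[\begin{array}{cc} g & X \\ 0 & {}^tg^{-1}\end{array}\right]$ with $g \in GL_n(F)$, $X \in M_n(F)$ and $g^{-1}X \in SM_n(F)$, and a general element of $N$ as $n(Y) = \left[\begin{array}{cc} I_n & Y \\ 0 & I_n\end{array}\right]$ with $Y \in SM_n(F)$. The key computation is $p^{-1} n(Y) p$: one checks that this again lies in $N$ and in fact equals $n({}^tg Y g)$. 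This is the heart of the argument — it says the conjugation action of $P$ on $N \cong SM_n(F)$ factors through $M \cong GL_n(F)$ and is given by $g \cdot Y = {}^tg^{-1} Y g^{-1}$ (or its inverse, depending on the convention $\psi^p(n) = \psi(p^{-1}np)$), with the unipotent radical $N$ itself acting trivially. Once this is in hand, the statements follow by bookkeeping.

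For part (1): using $\psi^p_{_A}(n(Y)) = \psi_{_A}(p^{-1}n(Y)p) = \psi_{_0}(\tr(A \cdot {}^tgYg)) = \psi_{_0}(\tr(gA\,{}^tg \cdot Y))$ (cyclicity of trace), the condition $\psi^p_{_A} = \psi_{_A}$ for all $Y \in SM_n(F)$ becomes $\tr((gA\,{}^tg - A)Y) = 0$ for all symmetric $Y$. Since $gA\,{}^tg - A$ is symmetric and the trace pairing on $SM_n(F)$ is non-degenerate (here the assumption $\mathrm{char}(F) \neq 2$ is used), this forces $gA\,{}^tg = A$; transposing or relabelling $g \mapsto {}^tg$ gives the stated condition ${}^tgAg = A$ up to the convention, and in either case $X$ is unconstrained. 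Recording this set gives exactly $S_{\psi_{_A}}$. Part (2) is then immediate: intersecting with $M$ just imposes $X = 0$, leaving $\{\diag(g, {}^tg^{-1}) : {}^tgAg = A\}$, and the map $\diag(g,{}^tg^{-1}) \mapsto g$ is the claimed isomorphism onto $O_n(F,A) = \{g : {}^tgAg = A\}$ by definition of the orthogonal group. For part (3): $N$ is normal in all of $P$, hence in $S_{\psi_{_A}}$; the computation $p^{-1}n(Y)p \in N$ shows $S_{\psi_{_A}} \supseteq N$, and $M_{\psi_{_A}} \cap N = \{1\}$ with $M_{\psi_{_A}}N = S_{\psi_{_A}}$ follows from the analogous semidirect product decomposition $P = MN$ restricted to the stabilizer (every $p \in S_{\psi_{_A}}$ factors as $\diag(g,{}^tg^{-1}) \cdot n({}^tg^{-1}X)$ with the $M$-part automatically in $M_{\psi_{_A}}$).

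The only genuinely substantive point is the conjugation formula $p^{-1}n(Y)p = n({}^tgYg)$ together with the non-degeneracy of the trace form on $SM_n(F)$; the rest is routine matrix algebra and unwinding definitions. I would expect no serious obstacle, though care is needed to keep the convention $\psi^p(n) = \psi(p^{-1}np)$ straight so that the condition comes out as ${}^tgAg = A$ rather than $gA\,{}^tg = A$ — these define the same subgroup of $GL_n(F)$ (replace $g$ by $g^{-1}$ and use that $A$ is invertible when $\psi_{_A}$ is non-degenerate, or simply observe both are conjugate-equivalent descriptions of $O_n(F,A)$), but the statement should match the form written in the lemma.
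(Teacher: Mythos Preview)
Your overall strategy matches the paper's proof exactly: compute the conjugation action of $P$ on $N$, then use non-degeneracy of the trace pairing on $SM_n(F)$ (equivalently, the paper phrases this as surjectivity of $w \mapsto \tr(dw)$ for nonzero symmetric $d$). Parts (2) and (3) are, as you say, routine once (1) is in place, and the paper simply declares them obvious.

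However, your key computation is off by an inversion. A direct check gives
\[
p^{-1} n(Y)\, p \;=\; n\!\left(g^{-1} Y\, {}^t g^{-1}\right),
\]
not $n({}^tg Y g)$. Feeding this into $\psi_{_A}$ yields $\psi_{_0}\bigl(\tr({}^tg^{-1} A g^{-1}\cdot Y)\bigr)$, and the non-degeneracy argument then gives ${}^tg^{-1} A g^{-1} = A$, i.e.\ ${}^tg A g = A$ on the nose, exactly as stated. Your formula instead produces the condition $g A\, {}^tg = A$, and your proposed fix (``transposing or relabelling $g \mapsto {}^tg$'') does not work: these two conditions do \emph{not} cut out the same subgroup of $GL_n(F)$ when $A$ is degenerate. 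For the very matrix $C = \left[\begin{smallmatrix}\gamma & 0\\ 0 & 0\end{smallmatrix}\right]$ used throughout the paper, ${}^tg C g = C$ gives the lower-triangular group \eqref{O2(F,C)-defn}, whereas $g C\, {}^tg = C$ gives its upper-triangular transpose --- a different subgroup. Your parenthetical remark that the two descriptions agree ``when $A$ is invertible'' is correct but irrelevant here, since the paper's main application is precisely the rank-one (degenerate) case. So the plan is right, but you should redo the conjugation carefully; once corrected, no relabelling is needed and the condition comes out as written.
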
	

\begin{proof}
	For proof of (1), fix $\sigma:= \left[\begin{array}{cc}
	g & X \\
	0 & ^tg^{-1}
	\end{array}\right] \in P.$ Then, $\sigma\in S_{\psi_{_A}}$ if and only if     
	$$\psi_{_0}\left(\tr[(^tg^{-1} A g^{-1} - A)X']\right) = 1,$$ for all $X' \in SM_n(F).$ We observe that for any non-zero $d\in SM_n(F),$ the trace form $\tr_{d}: SM_n(F) \rightarrow F$ defined by $\tr_{d}(w) = \tr(dw)$ for  $w \in SM_n(F)$ is a surjective map. 
	Thus, $\sigma\in S_{\psi_{_A}} $  if and only if $~^tg^{-1} A g^{-1} - A =0.$ This proves (1). 
	The statement (2) is obvious and (3) is straightforward for which we skip the proof. 
\end{proof}

\subsection{Geometric Lemma}\label{geometriclemma}

In this paragraph, we recall the Bernstein-Zelevinsky Geometric lemma for $Sp_{4}(F)$ in our context. In the rest of this subsection, we fix $n=2$ and let $k\in \{1,2\}.$  Then, $P_1=Q,$ $P_2=P$ and we have $M_1=L,N_1=U,$ $M_2=M$ and $N_2=N.$ We have $P_k=M_kN_k$ for $k\in \{1,2\}.$ Let $\psi$  denote the character $\psi_{_C}$ of $N$ defined by \eqref{CharacterCorrspondngtoC}. Recall that $S_{\psi}=M_{\psi}N.$  Note that $i_{_{N_k,\one}}(\rho)=\ind_{P_k}^{Sp_4(F)}(\rho)$ for any $\rho\in \Alg(M_k).$ The Geometric Lemma \cite[\S 5, Theorem 5.2]{BZ2} of Bernstein-Zelevinsky gives a recipe to obtain the general structure of twisted Jacquet module $r_{N,\psi}(\ind_{P_k}^{Sp_4(F)}(\rho))$ for any $\rho\in \Alg(M_k)$ under certain hypothesis (see \cite[\S 5, 5.1(1)--(4)]{BZ2}). 

Consider the parabolically induced representation $\ind_{P_k}^{Sp_{4}(F)}(\rho)$ where $\rho\in \Alg(M_k).$ 
In this scenario, it is easy to see that the hypothesis (1) and (2) of \cite[\S 5, 5.1(1)--(4)]{BZ2} mentioned above holds. To check hypothesis (3), one needs to show that $S_{\psi}$ has finitely many orbits while acting on the quotient $P_k \backslash Sp_{4}(F).$ The hypothesis (4) involves a technical criterion. We recall the relevant definition from \cite[\S 5, 5.1(3)]{BZ2}. Call a subgroup $H\subset Sp_4(F)$ decomposable with respect to a pair of subgroups  $(H_1, H_2),$ if $H \cap  (H_1 H_2) = (H \cap H_1)\cdot (H \cap H_2).$

Consider the action of $Sp_{4}(F)$ on $P_k \backslash Sp_{4}(F)$ defined by $g \cdot P_kh = P_kh g^{-1}$ where $g, h \in Sp_{4}(F).$  We restrict this action to  $S_{\psi}$  and prove that there are only finitely many orbits  for this restricted action. This verifies that the hypothesis (3) of \cite[\S 5.1]{BZ2} holds.  Let $\{Z_j:1\leq j\leq  m\}$ be the distinct $S_{\psi}$-orbits. For each $S_{\psi}$-orbit $Z_j$ we find an element $w_j\in Sp_{4}(F)$ such that $w_j\cdot P_k=P_kw_j^{-1}\in Z_j.$  Then, $Sp_{4}(F)=\bigsqcup\limits_{j=1}^{m}  Z_j= \bigsqcup\limits_{j=1}^{m} Orb_{S_{\psi}} (P_kw_j^{-1})= \bigsqcup\limits_{j=1}^{m} P_kw_j^{-1} S_{\psi}.$ If we put $\mathcal{W}:=\{w_j: 1\leq j \leq m\},$ then  $\{w^{-1}: w\in \mathcal{W}\}$ forms a complete set of $(P_k,S_{\psi})$-double coset representatives in $Sp_{4}(F).$ Thus, $\mathcal{W}=\{w: w^{-1}\in P_k \backslash Sp_{4}(F)/S_{\psi}\}.$

For an element $g$ and a subgroup $H$ of $Sp_{4}(F),$ $g(H)$ shall denote the conjugate $gHg^{-1}$ of $H.$ To check hypothesis (4) in \cite[\S 5.1]{BZ2},  we have to verify for each $w \in \mathcal{W}$  that the following set of conditions hold:

\begin{enumerate}
	\item[(a)] the groups $w(P_k), w(M_k)$ and $w(N_k)$ are decomposable with respect to the pair of subgroups $(M_{\psi}, N)$;
	
	\item[(b)] the groups $w^{-1}(S_{\psi}), w^{-1}(M_{\psi})$ and $w^{-1}(N)$ are decomposable with respect to the pair of subgroups $(M_k, N_k).$
\end{enumerate} 

If all the conditions (1)--(4) of \cite[\S 5.1]{BZ2} hold, to determine whether there is a contribution to the twisted Jacquet module from the double coset containing $w^{-1}$ we have to verify whether the following condition holds:
\begin{equation}\label{starcondition}
\psi=1  \text{ on } wN_kw^{-1}\cap N. 
\end{equation}  

For $w \in \mathcal{W},$ one defines the functor $\Phi_w:\Alg(M_k)\to \Alg(M_\psi)$ as follows. If \eqref{starcondition} does not hold for a particular $w \in \mathcal{W},$ then the corresponding double coset does not contribute to the twisted Jacquet module, i.e., the contribution is zero and we set $\Phi_w = 0.$
On the other hand, if \eqref{starcondition} holds for a particular $w\in \mathcal{W},$ then we define the following:

\begin{enumerate}
	\item ${M_\psi}'(w)=wM_kw^{-1}\cap M_{\psi}.$
	
	\item ${N_k}'(w)= wN_kw^{-1} \cap M_{\psi}.$
	
	\item $N'_{w}=w^{-1}Nw \cap M_k.$
	
	\item $\psi'_{w}=(\psi^{w^{-1}})_{|_{N'_{w}}}.$
	
	\item $M_k'(w)=w^{-1}M_{\psi}w \cap M_k.$
\end{enumerate}

One also has the following functors defined:
\begin{enumerate}
	\item[(i)] $ r_{N'_{w},\psi'_{w}}: \Alg(M_k)\to \Alg(M_k'(w)).$
	
	\item[(ii)] $w: \Alg(M_k'(w)) \to\Alg({M_\psi}'(w))$ i.e., for each $\rho \in \Alg(M_k'(w))$ we define $\rho^{w}(m)=\rho(w^{-1}mw)$ for $m\in {M_{\psi}}'(w).$ Then, $\rho^{w}\in \Alg({M_{\psi}}'(w)).$ 
	
	\item[(iii)] $i_{_{{N_k}'(w), \one}}:\Alg({M_\psi}'(w))\to \Alg(M_{\psi}).$ 
\end{enumerate}

We have two characters $\varepsilon_1(w)$ and $\varepsilon_2(w) $ defined as follows:

$\varepsilon_1(w): = {\rm mod}_{N_k}^{\frac{1}{2}} \cdot {\rm mod}_{w^{-1}S_{\psi}w \cap N_k}^{-\frac{1}{2}}$ is a character of the group $M_k'(w)$ and 

$\varepsilon_2(w): = {\rm mod}_{N}^{\frac{1}{2}} \cdot {\rm mod}_{w P_k w^{-1} \cap N}^{-\frac{1}{2}}$ is a character of the group ${M_\psi}'(w).$ Define the character $\varepsilon(w):M_k'(w) \to \C^{\times}$ by $\varepsilon(w)[m]= \varepsilon_1(w)[m] \cdot \varepsilon_2(w)[wmw^{-1}]$ for each $m\in M_{k}'(w).$ 
Put $\Phi_w=i_{_{{N_k}'(w),\one}}\circ w \circ \varepsilon(w)\circ r_{N'_{w},\psi'_{w}}.$ The contribution from the orbit $P_k w^{-1}S_\psi$ to the twisted Jacquet module $r_{N,\psi}(\ind_{P_k}^{Sp_4(F)}(\rho))$ is  given by the induced representation of the group $M_{\psi},$ denoted by $\Phi_w(\rho)$ where
\begin{equation}\label{contributionPhi(w)}
\Phi_w(\rho)=i_{_{{N_k}'(w), \one}}([\varepsilon(w) \cdot r_{N'_{w},\psi'_{w}}(\rho)]^{w}).
\end{equation}

The action of the representation $[\varepsilon(w) \cdot r_{N'_{w},\psi'_{w}}(\rho)]^{w}$ on an element $m\in {M_\psi}'(w)$ is given by 
\begin{equation}\label{inducingaction}
[\varepsilon(w) \cdot r_{N'_{w},\psi'_{w}}(\rho)]^{w}(m)= \varepsilon(w)[w^{-1}mw]\cdot r_{N'_{w},\psi'_{w}}(\rho)[w^{-1}mw].
\end{equation}

The first statement of the Geometric lemma (\cite[\S 5, Theorem 5.2]{BZ2}) is then that $r_{N,\psi}(\ind_{P_k}^{Sp_{4}(F)}(\rho))$ is glued from the representations $\Phi_w(\rho)$ where $w$ varies over $\mathcal{W}.$

\subsection{Set up for the Geometric Lemma}\label{setuplemma}

We note the following generality. Suppose that $G$ is an $\ell$-group and $H,K$ are closed subgroups of $G$ such that $K$ is a closed subgroup of $H.$ Suppose $G$ acts on an $\ell$-space $X$ transitively and $H$ acts on $X$ with $k+1$ orbits $Orb_{H}(x_j)$ for $j=0,1,\dots, k.$  Suppose the $K$-action on each $Orb_H(x_j)$ decomposes $Orb_H(x_j)$ into disjoint $K$-orbits as follows: 
\[Orb_H(x_j)=\bigsqcup_{i=1}^{r_j} Orb_K(x_{_{ji}}),\]
where $x_{_{j1}}=x_j$ for each $j\in \{0,1,\dots, k\}.$ 
Then, $X$ decomposes into a disjoint union of $K$-orbits as follows:
\begin{equation}\label{general-orbit-for-BZ}
X= \bigsqcup_{j=0}^{k}\left(\bigsqcup_{i=1}^{r_j} Orb_K(x_{_{ji}})\right).
\end{equation}

Suppose that $g_0=e, g_1,\dots, g_k$ are elements of $G$ such that $g_j\cdot x_0=x_j$ for $0 \leq j\leq k.$ Also, suppose that $h_{_{j1}}, \dots, h_{_{jr_j}}$ are elements of $H$ such that $h_{_{ji}}\cdot x_{j1}=x_{_{ji}}$ for $1\leq i\leq r_j$ and $0 \leq j\leq k.$ 
Then a complete set of orbit representatives for the action of $K$ on $X$ is given by 
$$ \{ h_{_{ji}} \cdot (g_{j}  \cdot x_{_{01}}): 1\leq i \leq r_{j},0\leq j \leq k\}.$$

Let $G_0$ be a closed subgroup of $G$ and put $X= G_0 \backslash G.$ We would like to apply the principle we have observed in the previous paragraph to $K$ acting on $X$ where the action of $K$ on $X$ is the restriction of the $G$-action on $X$ given by: for $g \in G$ and $G_0h \in G_0 \backslash G$ define $g \cdot G_0h = G_0h g^{-1}.$ We summarize the above discussion in the following lemma whose proof follows from \eqref{general-orbit-for-BZ}.

\begin{lemma}\label{hijgj} 
	Suppose $G,G_0,H,K$ are $\ell$-groups with $G_0,H,K$ closed subgroups of $G$ and $K$ is a closed subgroup of $H.$ Assume the following:
	\begin{enumerate}
		\item $G_0 g_0,\dots, G_0 g_k$ are a complete set of disjoint $H$-orbits for the action of $H$ on $G_0 \backslash G$ with $g_0=e.$
		
		\item For each $j\in \{0,1,\dots, k\}, \{ G_0g_j{h_{ji}}^{-1} K: 1\leq i \leq r_j\}$ is a complete set of disjoint $K$-orbits for the action of $K$ on $G_0 g_j.$
	\end{enumerate} 
	
	Then, $\{ g_{j}{h_{_{ji}}}^{-1}: 1\leq i \leq r_{j},0\leq j \leq k\}$ is a complete set of 
	$(G_0, K)$-double coset representatives for $G,$ i.e., $G=\bigsqcup\limits_{j=0}^{k}\left( \bigsqcup\limits_{i=1}^{r_j} G_0 g_j{h_{ji}}^{-1} K \right).$
\end{lemma}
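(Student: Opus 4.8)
The statement to prove is Lemma~\ref{hijgj}, which is essentially a bookkeeping result: given a chain of subgroups $K \subset H \subset G$ (with $G_0$ another closed subgroup) together with a two-layer orbit decomposition of $G_0\backslash G$ --- first into $H$-orbits, then each $H$-orbit into $K$-orbits --- one extracts a complete, disjoint set of $(G_0,K)$-double coset representatives. The plan is to reduce everything to the correspondence between $K$-orbits on $G_0\backslash G$ and $(G_0,K)$-double cosets in $G$, and then to trace through the two given hypotheses to read off the representatives.

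First I would recall the general correspondence: for the right action of $K$ on $G_0\backslash G$ given by $k\cdot G_0 x = G_0 x k^{-1}$, the orbit of $G_0 x$ is exactly the image of the double coset $G_0 x K$ under the quotient $G\to G_0\backslash G$, and two points $G_0 x, G_0 y$ lie in the same $K$-orbit iff $G_0 x K = G_0 y K$. Hence a set $\{x_\alpha\}$ gives a complete set of disjoint $(G_0,K)$-double coset representatives iff $\{G_0 x_\alpha\}$ is a complete set of disjoint $K$-orbit representatives on $G_0\backslash G$. So it suffices to show that $\{g_j h_{ji}^{-1} : 1\le i\le r_j,\ 0\le j\le k\}$ yields precisely the $K$-orbits on $G_0\backslash G$, i.e. to establish the disjoint union $G_0\backslash G = \bigsqcup_j\bigsqcup_i Orb_K(G_0 g_j h_{ji}^{-1})$.

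For that, I would combine the two hypotheses. By hypothesis~(1), $G_0\backslash G = \bigsqcup_{j=0}^k Orb_H(G_0 g_j)$ as a disjoint union of $H$-orbits. Fix $j$. Since $K\subset H$, the $K$-action preserves $Orb_H(G_0 g_j)$, so this $H$-orbit is itself a disjoint union of $K$-orbits; hypothesis~(2) says precisely that $\{G_0 g_j h_{ji}^{-1} : 1\le i\le r_j\}$ is a complete set of representatives for these $K$-orbits within $Orb_H(G_0 g_j)$ (note $G_0 g_j h_{ji}^{-1}$ is the point obtained by letting $h_{ji}\in H$ act on $G_0 g_j$, which indeed lands inside $Orb_H(G_0 g_j)$). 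Taking the union over $j$ and invoking the disjointness from hypothesis~(1), one gets $G_0\backslash G = \bigsqcup_{j=0}^k \bigsqcup_{i=1}^{r_j} Orb_K(G_0 g_j h_{ji}^{-1})$, with all pieces disjoint: pieces with different $j$ are disjoint because they live in different $H$-orbits, and pieces with the same $j$ but different $i$ are disjoint by hypothesis~(2). This is exactly \eqref{general-orbit-for-BZ} specialized to $X = G_0\backslash G$, $x_0 = G_0 e$, and the abstract discussion preceding the lemma already records this; so in fact the proof can simply cite that discussion.

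Translating back through the orbit/double-coset correspondence of the first step then gives that $\{g_j h_{ji}^{-1}\}$ is a complete set of disjoint $(G_0,K)$-double coset representatives, i.e. $G = \bigsqcup_{j=0}^k\bigl(\bigsqcup_{i=1}^{r_j} G_0 g_j h_{ji}^{-1} K\bigr)$, which is the assertion. I do not anticipate a genuine obstacle here; the only point requiring a little care is the compatibility of left versus right conventions --- checking that acting by $h_{ji}\in H$ on the coset $G_0 g_j$ (in the convention $h\cdot G_0 x = G_0 x h^{-1}$) produces the representative $g_j h_{ji}^{-1}$ and that $K$-orbit equality corresponds to equality of double cosets $G_0(\cdot)K$ --- but this is exactly the normalization fixed in the paragraph before the lemma, so it is immediate. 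The whole argument is a one-paragraph deduction from \eqref{general-orbit-for-BZ} and the elementary orbit--double-coset dictionary.
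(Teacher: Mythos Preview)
Your proposal is correct and follows essentially the same approach as the paper: the paper's proof is a single sentence citing \eqref{general-orbit-for-BZ}, and your argument is precisely the unpacking of that citation together with the orbit--double-coset dictionary you describe. You supply more detail than the paper does, but the route is identical.
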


\section{Orbits for the $S_{\psi}$-action}\label{Orbits-Spsi-action}

Recall that we have fixed $C = \left[\begin{array}{cc}
	\gamma &0 \\
	0& 0
\end{array}\right]$ where $\gamma\in F^{\times}.$ We shall denote the character $\psi_{_C}$ of \eqref{CharacterCorrspondngtoC} by $\psi.$ By \S \ref{Spsi-defn-etc} and \eqref{O2(F,C)-defn}, we have
$O_{2}(F,C) =
\left\{\left[\begin{array}{cc}
\pm 1 & 0 \\
y & d
\end{array}\right]: d \in F^{\times}, y \in F \right\}.$ In view of Lemma \ref{SpsiMpsi}, the subgroups $S_{\psi}$ (resp. $M_{\psi}$) which stabilizes the character $\psi$  in $P$ (resp. $M$) are given by
\begin{enumerate}
\item[(a)] $S_{\psi} = \left\{ \left[\begin{array}{cc}
	g & X \\
	0 & ^tg^{-1}
\end{array}\right] \in P : g \in O_{2}(F,C)  \text{ and } X \in M_2(F) \right\},$ and

\item[(b)] $M_{\psi} = \left\{ \left[\begin{array}{cc}
	g & 0 \\
	0 & ^tg^{-1}
\end{array}\right] :g \in O_{2}(F,C) \right\} \cong O_{2}(F,C).$
\end{enumerate}

We note that the group $S_{\psi}$ and hence $M_{\psi}$ do not depend on the element $\gamma\in F^{\times}.$
Also, $M_{\psi}$ is the image of $O_{2}(F,C)$ under the embedding \eqref{embedingofleviSp_{2n}(F)-Siegel} of $GL_{2}(F)$ in $Sp_{4}(F).$ 
With the above notations in place, we study the restriction of a principal series representation induced from a maximal parabolic subgroup of $Sp_{2n}(F)$ to its Siegel parabolic subgroup $P$ in \S \ref{restriction-to-P}. In \S \ref{Action-of-Orthogonal-group}, we prove a preparatory result which is used in \S \ref{Action-of-Spsi-on-Sp_4modP} and \S \ref{Action-of-Spsi-on-Sp4modQ}, where we compute orbits for the action of the group $S_{\psi}$ on $P \backslash Sp_{4}(F)$ and $Q \backslash Sp_{4}(F)$ (via Propositions \ref{S-psi-orbit-Sp4/P} and \ref{S-psi-orbit-Sp4/Q}).  This will give us a complete set of $(P, S_{\psi})$-double coset (resp. $(Q, S_{\psi})$) representatives in $Sp_{4}(F).$  In \S \ref{KeySubgroups}, we calculate the modular characters of certain groups which shall be used later while proving the main results. We verify the decomposability conditions via Lemmas \ref{Decomposability-verification-Siegel} and \ref{Decomposability-verification-Klingen} in \S \ref{Decomposability}.

\subsection{The restriction to $P$ of a principal series representation of $Sp_{2n}(F)$}
\label{restriction-to-P}

Let $1\leq k \leq n.$ 
Let $\rho$ be a smooth representation of $M_k.$ We determine the restriction to $P$ of a principal series representation of $Sp_{2n}(F)$ of the form $\ind_{P_k}^{Sp_{2n}(F)}(\rho).$  First, we deal with induction from $P$ and then consider the case of induction from $P_k$ for $1\leq k<n.$

\begin{proposition}\label{Stabilizerfororbits}
	For $0 \leq j \leq n,$ put  $H_j= \sigma_j P{\sigma_j}^{-1} \cap P$ where $\sigma_j$'s are as in Proposition \ref{Pdoublecoset}.  Then,   
	\begin{enumerate}
		\item $H_0  = P,$
		
		\item 	$H_j = \left\{ \left[\begin{array}{cc}
		g & X \\
		0 & ^tg^{-1}
		\end{array} \right] \in P : g \in \overline{P_{{j, n-j}}}, X = \left[\begin{array}{cc}
		0 & y \\
		z  & w 
		\end{array} \right] \right\}$  for $1 \leq j \leq n-1,$ where $y \in M_{j, n-j}(F),$  $z \in M_{n-j, j}(F), w \in M_{n-j}(F),$ and
		
		\item $H_n  = M.$
		
	\end{enumerate}
	 
Moreover, for any $\rho\in \Alg(M),$ put $\pi=\ind_P^{Sp_{2n}(F)}(\rho).$ We have a filtration of $P$-modules given by
$$ \{0\} = \pi_{n+1} \subset \pi_n \subset \cdots \subset \pi_0=\pi_{_{|_P}} $$ 
 where $\pi_j/\pi_{j+1}\cong i_{H_j}^P([ \delta_P^{\frac{1}{2}} \cdot \rho ]^{\sigma_j})$ for $0\leq j\leq n.$
\end{proposition}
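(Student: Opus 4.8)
The plan is to combine the Bruhat decomposition of $Sp_{2n}(F)$ with respect to the Siegel parabolic (Proposition \ref{Pdoublecoset}) with a standard filtration-by-orbits argument for the restriction of an induced representation. First I would compute the groups $H_j = \sigma_j P \sigma_j^{-1} \cap P$ explicitly. Since $\sigma_j$ is the ``block transposition'' matrix that sends $X_0$ to $X_j = \langle e_1^\vee,\dots,e_j^\vee, e_{j+1},\dots,e_n\rangle$, the stabilizer $\sigma_j P \sigma_j^{-1}$ of $X_j$ intersected with $P$ (the stabilizer of $X_0$) is the common stabilizer of the pair of flags. A direct matrix computation, using the explicit description of $P$ from \S\ref{SiegelparabolicSp_{2n}(F)} and conjugating the condition $g^{-1}X \in SM_n(F)$ by $\sigma_j$, yields that the $GL_n(F)$-component must lie in $\overline{P_{j,n-j}}$ (the lower-block-triangular parabolic, since $\sigma_j$ swaps the first $j$ coordinates with their duals) and that the symmetric part $X$ is forced to vanish in its upper-left $j\times j$ block. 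This gives cases (1)--(3); cases $j=0$ and $j=n$ are immediate since $\sigma_0 = I_{2n}$ and $\sigma_n$ conjugates $P$ to the opposite parabolic $\bar P$, so $\sigma_n P \sigma_n^{-1} \cap P = M$.

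Next I would set up the filtration. By Proposition \ref{Pdoublecoset}, $Sp_{2n}(F) = \bigsqcup_{j=0}^n P \sigma_j^{-1} P$, and correspondingly $P\backslash Sp_{2n}(F)$ is stratified by the $P$-orbits $Orb_P(X_j)$ (acting on the right). The key point (noted after Lemma \ref{lemmafororbitsGeneral}) is that $Y \in Orb_P(X_j)$ iff $\dim(Y\cap V_0) = n-j$, so the orbits are ordered by a ``degeneracy'' invariant, and one checks $\overline{Orb_P(X_j)} \subseteq \bigcup_{i \geq j} Orb_P(X_i)$; equivalently $\bigcup_{i\leq j} Orb_P(X_i)$ is open. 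Thus there is a descending chain of $P$-stable open subsets of $P\backslash Sp_{2n}(F)$, and restricting the sections of the sheaf defining $\pi = \ind_P^{Sp_{2n}(F)}(\rho)$ to these open sets gives the $P$-submodule filtration $\pi_{n+1} = 0 \subset \pi_n \subset \cdots \subset \pi_0 = \pi|_P$, where $\pi_j$ consists of sections supported on $\overline{\bigcup_{i\geq j}Orb_P(X_i)}$. The subquotient $\pi_j/\pi_{j+1}$ is the space of sections supported on the single locally closed orbit $Orb_P(X_j) \cong H_j \backslash P$.

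The remaining step is to identify $\pi_j/\pi_{j+1}$ as $i_{H_j}^P([\delta_P^{1/2}\cdot\rho]^{\sigma_j})$. This is exactly the content of the geometric-lemma-type computation for a single orbit: sections of the induced representation supported on the double coset $P\sigma_j^{-1}P$, viewed as a $P$-module, form the representation of $P$ induced from $H_j = \sigma_j P \sigma_j^{-1}\cap P$ acting by the ``twisted'' data. Here I would use the fact that $\ind_P^{Sp_{2n}(F)}(\rho) = i_P^{Sp_{2n}(F)}(\delta_P^{1/2}\cdot\rho)$ (the normalization identity from \S\ref{Generalities}) to keep track of the modulus factor: on the orbit, the representative section transforms under $H_j$ by $h \mapsto (\delta_P^{1/2}\cdot\rho)(\sigma_j^{-1} h \sigma_j)$, i.e.\ by $[\delta_P^{1/2}\cdot\rho]^{\sigma_j}$ restricted to $H_j$ (note $\sigma_j^{-1}H_j\sigma_j \subseteq P$ so $\rho$ makes sense there), and the induction from $H_j$ to $P$ is the unnormalized $i_{H_j}^P$ because the orbit is locally closed and the decomposability/measure bookkeeping is absorbed into the $\sigma_j$-twist of $\delta_P^{1/2}$. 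I expect the main obstacle to be precisely this bookkeeping: verifying that no extra modulus character appears beyond $[\delta_P^{1/2}\cdot\rho]^{\sigma_j}$, which requires checking that the relevant modular characters of $H_j$ and of $\sigma_j^{-1}H_j\sigma_j$ match up correctly (equivalently, that the ``$\varepsilon$'' factors in the geometric lemma are trivial in this restriction-to-$P$ setting because both $P$ and $\sigma_j P\sigma_j^{-1}$ have the same Levi type). The explicit form of $H_j$ from part (2) makes this a finite computation with $\delta_P$, which I would carry out but not reproduce in the sketch.
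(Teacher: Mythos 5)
Your approach matches the paper's (which in fact omits the proof, calling it ``a standard application of Mackey theory and Lemma \ref{lemmafororbitsGeneral}'' and citing a reference): compute the stabilizers $H_j$ explicitly and then apply the filtration-by-orbits form of the Mackey restriction formula, keeping track of the $\delta_P^{1/2}$ coming from the identity $\ind_P^G = i_P^G(\delta_P^{1/2}\cdot)$. The identification of $H_j$ via ``common stabilizer of $V_0$ and $X_j$ $\Rightarrow$ $g$ preserves $\langle e_{j+1},\dots,e_n\rangle$ $\Rightarrow$ $g\in \overline{P_{j,n-j}}$, and the $j\times j$ corner of $X$ is forced to vanish'' is correct, as are the $j=0$ and $j=n$ endpoints.

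One bookkeeping slip to fix: the closure order of the $P$-orbits is reversed in your write-up. Since $\dim(Y\cap V_0) = n-j$ on $Orb_P(X_j)$ and this intersection dimension is \emph{upper} semicontinuous, degeneration increases $\dim(Y\cap V_0)$, i.e.\ decreases $j$. Hence $\overline{Orb_P(X_j)}\subseteq \bigcup_{i\leq j}Orb_P(X_i)$, and it is $\bigcup_{i\geq j}Orb_P(X_i)$ that is open (indeed $Orb_P(X_0)$ is a single point, closed, and $Orb_P(X_n)$ is open, consistent with Remark \ref{open-orbits}). Your definition $\pi_j = $ sections supported in $\bigcup_{i\geq j}Orb_P(X_i)$ is then consistent (drop the closure bar): $\pi_n$ is compactly supported sections on the open orbit, giving the subrepresentation at the bottom of the filtration, and $\pi_0/\pi_1$ is the restriction to the closed orbit, the top quotient. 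Your worry at the end about ``$\varepsilon$''-factors is unnecessary: the modulus corrections in the geometric lemma arise from normalizing the Jacquet (quotient) functor; for plain \emph{restriction} of $i_P^G(\tau)$ to $P$ the Mackey filtration yields $i_{H_j}^P(\tau^{\sigma_j})$ with no extra modular twist, and the $\delta_P^{1/2}$ in the statement is exactly accounted for by $\tau = \delta_P^{1/2}\cdot\rho$.
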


\begin{proposition}\label{StabilizerfororbitsGeneral}
	Fix $1 \leq k < n.$ For $j \in \{0, 1, \dots, k \},$ put $D_j= w_jP_k{w_j}^{-1}\cap P$ where $w_j$'s are as in Proposition \ref{PdoublecosetGeneral}. Then,
	\begin{enumerate}
		\item $D_0  = \left\{ \left[\begin{array}{cc}
		g & X \\
		0 & ^tg^{-1}
		\end{array} \right] \in P : g \in P_{k, n-k}, X \in M_n(F) \right\},$

		\item 
		$D_j =  \left\{ \left[\begin{array}{cc}
		g & X \\
		0 & ^tg^{-1}
		\end{array} \right] \in P :~ ^tg^{-1} \in P_{j,n-j}, X =   \left[\begin{array}{ccc}
		0 &y&z    \\
		u& v &w  \\
		0 & m &t 
		\end{array} \right] \in M_n(F) \right\}$ for $1 \leq j \leq k-1,$
		where $y \in M_{j, k-j}(F),  z \in M_{j, n-k}(F), u \in M_{k-j,j}(F),  v \in M_{k-j}(F),  w \in M_{k-j, n-k}(F), m \in M_{n-k, k-j}(F), t \in M_{n-k}(F),$ and

		\item $D_k =  \left\{ \left[\begin{array}{cc}
		g & X \\
		0 & ^tg^{-1}
		\end{array} \right] \in P :~ ^tg^{-1} \in P_{k, n-k}, X = \left[\begin{array}{cc}
		0 &0   \\
		0 & w
		\end{array} \right] \in M_n(F), w \in M_{n-k}(F) \right\}.$ \end{enumerate}
	For $\varrho\in \Alg(M_k),$ put $\Pi=\ind_{P_k}^{Sp_{2n}(F)}(\varrho).$ We have a filtration of $P$-modules given by
	$$\{0\}=  \Pi_{k+1} \subset \Pi_k \subset \cdots \subset \Pi_0=\Pi_{_{|_P}} $$ 
	where $\Pi_j/\Pi_{j+1}\cong i_{D_j}^P([ \delta_{P_k}^{\frac{1}{2}} \cdot \varrho]^{w_j})$  for $0\leq j\leq k.$\end{proposition}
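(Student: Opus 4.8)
\textbf{Proof plan for Proposition \ref{StabilizerfororbitsGeneral}.}
The strategy is to run the same argument as in Proposition \ref{Stabilizerfororbits}, but now for induction from the non-Siegel maximal parabolic $P_k$ with $k<n$. The starting point is Proposition \ref{PdoublecosetGeneral}, which tells us that $\{w_0^{-1},\dots,w_k^{-1}\}$ is a complete set of $(P_k,P)$-double coset representatives in $Sp_{2n}(F)$; equivalently, via the identification $P_k\backslash Sp_{2n}(F)\cong\bm\Lambda(k)$, the subspaces $X_0,X_1,\dots,X_k$ of Lemma \ref{lemmafororbitsGeneral} are a complete set of representatives for the $P$-orbits on $P_k\backslash Sp_{2n}(F)$, and $w_j(X_0)=X_j$. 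First I would compute the stabilizer groups $D_j=w_jP_kw_j^{-1}\cap P$. This is the purely computational core: one conjugates the explicit description of $P_k$ (from \S\ref{maximalparabolicsinSp}, using the block form and conditions \eqref{ConditionOnPk1}, \eqref{ConditionOnPk2}) by the permutation-type matrix $w_j$, intersects with $P$ (equivalently, with the condition of stabilizing $V_0=X_0$), and reads off the constraints. The claim for $D_0$ is immediate since $w_0=I_{2n}$ and $P_0\cap P$ amounts to requiring the $GL_n$-block to additionally stabilize the flag defining $X_0$ inside $X_0$'s complement, giving $g\in P_{k,n-k}$; for $D_j$ with $1\le j\le k-1$ and for $D_k$ one tracks how the $-I_j$ blocks of $w_j$ move the rows/columns, which forces $\,^tg^{-1}\in P_{j,n-j}$ and kills precisely the indicated blocks of $X$.

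Once the $D_j$ are identified, the filtration statement follows from the Geometric Lemma / the standard Bruhat-type filtration for restriction of an induced representation, exactly as invoked in Proposition \ref{Stabilizerfororbits}. Concretely: restricting $\Pi=\ind_{P_k}^{Sp_{2n}(F)}(\varrho)$ to $P$ and using that the $P$-orbits on $P_k\backslash Sp_{2n}(F)$ are $Orb_P(X_0),\dots,Orb_P(X_k)$, ordered so that $Orb_P(X_0)$ is closed and $Orb_P(X_k)$ is open, one gets a filtration $\{0\}=\Pi_{k+1}\subset\Pi_k\subset\cdots\subset\Pi_0=\Pi_{|_P}$ whose successive quotients are the contributions of the individual orbits. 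The orbit through $w_j^{-1}$ contributes $i_{D_j}^{P}$ applied to the representation of $D_j$ obtained from $\varrho$ by the recipe in \S\ref{BZinduction}: twist by the normalization character and transport by $w_j$, which is exactly $[\delta_{P_k}^{1/2}\cdot\varrho]^{w_j}$ inflated from $w_jM_kw_j^{-1}\cap P$-data along $D_j$. Here the $\delta_{P_k}^{1/2}$ appears because $\ind$ is normalized induction (so $\ind_{P_k}^{Sp_{2n}(F)}(\varrho)=i_{P_k}^{Sp_{2n}(F)}(\delta_{Sp_{2n}}^{-1/2}\delta_{P_k}^{1/2}\cdot\varrho)$ and the global $\delta_{Sp_{2n}}^{\pm1/2}$ factors cancel against the normalization in the unnormalized Mackey filtration on $P$).

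I expect the main obstacle to be bookkeeping rather than conceptual: getting the block decomposition of $w_jP_kw_j^{-1}\cap P$ exactly right, in particular verifying that the conditions \eqref{ConditionOnPk2} relating the off-diagonal blocks of an element of $P_k$ translate, after conjugation by $w_j$, precisely into ``$X$ has the displayed shape with those specific blocks vanishing'' and into ``$\,^tg^{-1}\in P_{j,n-j}$'', with no extra or missing relations. A secondary subtlety is checking that the ordering of orbits by closure relation matches the indexing $j=0,\dots,k$ (closed at $j=0$, open at $j=k$), which is needed for the filtration to be by \emph{subrepresentations} in the stated order; this follows from the dimension-count remark after Lemma \ref{lemmafororbitsGeneral}, namely $Y\in Orb_P(X_j)\iff\dim(Y\cap V_0)=k-j$, so larger $j$ means smaller intersection with $V_0$, i.e. a more open orbit. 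Apart from these, the argument is a direct transcription of the Siegel-case proof, so I would keep the write-up brief and refer to Proposition \ref{Stabilizerfororbits} for the parts that are identical.
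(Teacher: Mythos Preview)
Your proposal is correct and follows essentially the same approach as the paper, which simply states that the result is ``a standard application of Mackey theory and Lemma \ref{lemmafororbitsGeneral}'' with a reference to \cite[Ch.~3, \S 3.2.1]{Omer}. You have supplied the details the paper omits: the explicit computation of $D_j$ by conjugating $P_k$ by $w_j$ and intersecting with $P$, the closure ordering of orbits via the dimension criterion $\dim(Y\cap V_0)=k-j$, and the appearance of $\delta_{P_k}^{1/2}$ from the passage between normalized and unnormalized induction (note that $\delta_{Sp_{2n}}=1$ since $Sp_{2n}$ is unimodular, so that factor is simply trivial rather than ``cancelling'').
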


The statements in Propositions \ref{Stabilizerfororbits} and \ref{StabilizerfororbitsGeneral} concerning the restriction is a standard application of Mackey theory and Lemma \ref{lemmafororbitsGeneral}. For more comments, see for instance \cite[Ch. 3, \S 3.2.1]{Omer}.

\subsection{Action of $O_{2}(F,C)$}
\label{Action-of-Orthogonal-group}

Let $B=\left\{\left[ \begin{array}{cc}
a& x \\
0& b
\end{array}\right]: a,b\in F^{\times}, x\in F\right\}$ denote the standard Borel subgroup of $GL_{2}(F).$ Denote by $\overline{B}$ the subgroup opposite to $B$ in $GL_{2}(F).$ In this section, we study the action of the orthogonal group $O_{2}(F,C)$ on $\overline{B} \backslash GL_{2}(F)$ which arises naturally in the sequel.

\begin{proposition}\label{O2C-acts-GmodBbar}
$\overline{B} \backslash GL_{2}(F)/O_{2}(F,C)=
	\{ h_0, h_1 \},$ where  $h_0=I_2$ and $h_1=\left[\begin{array}{cc}
			0 &1 \\
			1& 0
		\end{array}\right].$

\end{proposition}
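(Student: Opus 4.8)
The plan is to identify $\overline{B}\backslash GL_2(F)$ with a concrete homogeneous space on which both groups act transparently, and then count orbits of $O_2(F,C)$ directly. Concretely, $\overline{B}$ is the stabilizer in $GL_2(F)$ of the line $\langle e_2\rangle$ (or equivalently, one may use the flag-variety description $\mathbb{P}^1(F)$), so $\overline{B}\backslash GL_2(F)$ is identified with the set of lines in $F^2$, i.e. $\mathbb{P}^1(F)$, via $\overline{B}g\mapsto g^{-1}\langle e_2\rangle$. Under this identification, the right action of $O_2(F,C)$ on $\overline{B}\backslash GL_2(F)$ (given by $k\cdot \overline{B}g = \overline{B}gk^{-1}$) corresponds to the natural linear action of $O_2(F,C)$ on lines in $F^2$. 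So the task reduces to: determine the orbits of $O_2(F,C)$ acting on $\mathbb{P}^1(F)$.

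Next I would use the explicit description $O_2(F,C)=\left\{\left[\begin{smallmatrix}\pm 1 & 0\\ y & d\end{smallmatrix}\right]: d\in F^\times, y\in F\right\}$ from \eqref{O2(F,C)-defn}. Writing a line as $\langle (a,b)^t\rangle$ with $(a,b)\neq (0,0)$, an element $\left[\begin{smallmatrix}\pm 1 & 0\\ y & d\end{smallmatrix}\right]$ sends $(a,b)^t$ to $(\pm a,\; ya+db)^t$. I claim there are exactly two orbits: the line $\langle e_1\rangle=\langle(1,0)^t\rangle$, which is fixed by the whole group (since $b=0$ forces the image to be $(\pm a,ya)^t$, still in $\langle e_1\rangle$); and everything else, i.e. all lines $\langle(a,b)^t\rangle$ with $b\neq 0$, which form a single orbit because given $b\neq 0$ we can use $y$ to kill $a$ and $d$ to rescale $b$, reaching $\langle e_2\rangle=\langle(0,1)^t\rangle$. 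Translating back through the identification, the orbit of $\langle e_2\rangle$ corresponds to the double coset $\overline{B}h_0 O_2(F,C)$ with $h_0=I_2$ (since $\langle e_2\rangle$ is the base point), and the orbit of $\langle e_1\rangle$ corresponds to the double coset of any $h$ with $h^{-1}\langle e_2\rangle=\langle e_1\rangle$, for which $h_1=\left[\begin{smallmatrix}0&1\\1&0\end{smallmatrix}\right]$ works (indeed $h_1^{-1}=h_1$ and $h_1\langle e_2\rangle=\langle e_1\rangle$). This gives $\overline{B}\backslash GL_2(F)/O_2(F,C)=\{h_0,h_1\}$, with the two cosets genuinely distinct since the associated lines lie in different $O_2(F,C)$-orbits.

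Alternatively — and this is perhaps cleaner to write — I would do a direct matrix computation: given $g\in GL_2(F)$, one wants to decide whether $g\in \overline{B}\,O_2(F,C)$ or $g\in\overline{B}h_1 O_2(F,C)$. Write $g=\left[\begin{smallmatrix}a&b\\c&e\end{smallmatrix}\right]$. If $a\neq 0$, row-reduce from the left by $\overline{B}$ (which allows adding multiples of the first row to the second, and independent scaling of the two rows) to bring $g$ into the form $\left[\begin{smallmatrix}1&*\\0&*\end{smallmatrix}\right]$; then one checks the resulting matrix, times a suitable element of $O_2(F,C)$ on the right, equals $I_2$ — the point being that $O_2(F,C)$ contains all $\left[\begin{smallmatrix}1&0\\y&d\end{smallmatrix}\right]$, which is exactly the freedom needed to clear the remaining entries. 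If $a=0$ then $b\neq 0$ (as $g$ is invertible), and multiplying by $h_1$ on the left swaps the rows, reducing to the previous case; this lands $g$ in the $\overline{B}h_1 O_2(F,C)$ double coset. Finally I would verify $\overline{B}\cap (h_1 O_2(F,C) h_1^{-1}\cdot\text{stuff})$ considerations, or more simply just check that $h_0$ and $h_1$ are not in the same double coset, e.g. by the line-invariant argument above, so the two double cosets are distinct.

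The main obstacle is essentially bookkeeping: making sure the left-right conventions for the action on $\overline{B}\backslash GL_2(F)$ match the paper's convention (the paper uses $g\cdot G_0 h = G_0 h g^{-1}$), and being careful that the freedom in $O_2(F,C)$ — namely the subgroup $\overline{N_{1,1}}$ together with the diagonal $T_2(F,C)$, as recorded in the identity $O_2(F,C)=T_2(F,C)\ltimes\overline{N_{1,1}}$ in the introduction — is exactly enough to normalize a matrix in $\overline{B}$-row-echelon form down to one of the two representatives. There is no deep content; the only thing to get right is that the "bad" line $\langle e_1\rangle$ is $O_2(F,C)$-stable precisely because the first column of every element of $O_2(F,C)$ is $\pm e_1$ plus a multiple of $e_2$... wait, it is $(\pm 1, y)^t$, so $O_2(F,C)$ does \emph{not} fix $e_1$, but it does fix the \emph{line} $\langle e_1\rangle$, and that distinction is the one subtle point to state carefully.
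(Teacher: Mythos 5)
Your primary approach (identify $\overline{B}\backslash GL_2(F)$ with $\mathbb{P}^1(F)$ via $\overline{B}g\mapsto g^{-1}\langle e_2\rangle$ and then compute $O_2(F,C)$-orbits on lines) is the same as the paper's, and your translation between double cosets and line-orbits is set up correctly. However, your orbit computation itself is wrong — you have the two orbits exactly reversed, and the "one subtle point" you flag at the end is precisely where the error lies.

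With the action you wrote down, $\left[\begin{smallmatrix}\pm 1 & 0\\ y & d\end{smallmatrix}\right]\cdot(a,b)^{t}=(\pm a,\,ya+db)^{t}$, the line $\langle e_1\rangle=\langle(1,0)^{t}\rangle$ is \emph{not} $O_2(F,C)$-stable: its image is $\langle(\pm1,y)^{t}\rangle$, which equals $\langle e_1\rangle$ only when $y=0$. You noticed that $e_1$ itself is not fixed and then asserted the line is; in fact neither is. The line that \emph{is} fixed is $\langle e_2\rangle=\langle(0,1)^{t}\rangle$, since its image $\langle(0,d)^{t}\rangle=\langle e_2\rangle.$ Symmetrically, your claim that all lines with $b\neq 0$ form a single orbit reaching $\langle e_2\rangle$ cannot be right: the first coordinate of the image is $\pm a$, which no choice of $y,d$ can change, so you can never reach $(0,1)^t$ from a point with $a\neq 0$. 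The correct picture (which is what the paper proves) is: $\{\langle e_2\rangle\}$ is a singleton orbit, and all lines $\langle(a,b)^{t}\rangle$ with $a\neq 0$ form the orbit of $\langle e_1\rangle$ — if $a\neq 0$ and $b\neq 0$, take $g=\left[\begin{smallmatrix}1&0\\ b/a&1\end{smallmatrix}\right]\in O_2(F,C)$, which sends $\langle e_1\rangle$ to $\langle(a,b)^t\rangle$. Your final conclusion $\{h_0,h_1\}$ happens to be unaffected because $h_0,h_1$ send the base point $\langle e_2\rangle$ to $\langle e_2\rangle$ and $\langle e_1\rangle$ respectively, and these two lines do lie in the two distinct orbits; but the stated justification (which orbit is the singleton and which is the large one) is backwards. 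Your sketched "alternative" row-reduction argument has a similar gap: after reducing to $\left[\begin{smallmatrix}1&b'\\0&d'\end{smallmatrix}\right]$, right multiplication by $O_2(F,C)$ cannot clear a nonzero $b'$ (since $O_2(F,C)$ has zero in the $(1,2)$ entry), so $a\neq 0$ does not on its own place $g$ in $\overline{B}\,O_2(F,C)$ — in fact $\overline{B}\,O_2(F,C)=\overline{B}$, and it is the vanishing of the $(1,2)$ entry $b$, not the non-vanishing of $a$, that characterizes this coset.
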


\begin{proof}
	Let $X$ denote the space of all one-dimensional subspaces of $F^2.$
	Let $F^2$ be regarded as a quadratic space associated with the matrix $C = \left[\begin{array}{cc}
		\gamma &0 \\
		0& 0
	\end{array}\right].$ Fix $L_0=\langle e_2\rangle$ and $L_1=\langle e_1 \rangle$ in $X.$   $GL_{2}(F)$ acts on $X$ transitively and we identify $\overline{B} \backslash GL_{2}(F)$ with the orbit of $L_0.$  Consider the action of $O_{2}(F,C)$ on $X.$ 
Note that $L_0$ and $L_1$ are in distinct $O_2(F,C)$-orbits. Suppose $L = \langle ae_1+be_2 \rangle \in X.$	If $a$ (resp. $b$) is zero then $L = L_0$ (resp. $L_1$). On the other hand, if $a$ and $b$ are both non-zero, then by choosing $g = \left[\begin{array}{cc}
		1 & 0 \\
		\frac{b}{a}  & 1
	\end{array}\right] \in O_{2}(F,C)$ we get $ g(L_1) = L.$ Thus, orbits of $L_0$ and $L_1$ are the only $O_{2}(F,C)$-orbits in $X.$ We observe that $h_0(L_0) = L_0$ and $h_1(L_0) = L_1$ proving the proposition. 
\end{proof}

\begin{remark}\label{KlingenCtrivialcoset}
	Note that, $h_1^{-1}=h_1.$ It is easy to see that Proposition \ref{O2C-acts-GmodBbar} is true with the same double coset representatives even if we replace $\overline{B}$ with $B.$ Explicitly, $B\backslash GL_{2}(F)/O_{2}(F,C) = \{ h_0, h_1 \},$ where $h_0$ and $h_1$ are as in Proposition \ref{O2C-acts-GmodBbar}. 
\end{remark}

\subsection{Orbits for the $S_{\psi}$-action on $P \backslash Sp_{4}(F)$}
\label{Action-of-Spsi-on-Sp_4modP}

In the case of $Sp_{4}(F),$ by Proposition \ref{Stabilizerfororbits} and  Proposition \ref{Pdoublecoset}, we have
\begin{equation*}
H_1 = \left\{\left[\begin{array}{cccc}
b & 0 & 0 & y \\
c & d &  dyb^{-1} & w \\
0 & 0 & b^{-1} & -c b^{-1} d^{-1} \\ 
0 & 0 & 0 & d^{-1}  
\end{array} \right] : b, d\in F^{\times}, c, y, w\in F  \right\},
\end{equation*}

\begin{equation}\label{sigma1-def}
\sigma_1 = \left[\begin{array}{cccc}
0&0  & -1 &0  \\
0& 1 & 0 & 0 \\
1 & 0 & 0&0 \\ 
0&0  &0  & 1  
\end{array} \right] \text{ and } \sigma_2=\left[ \begin{array}{cc}
& -I_{2}  \\
I_{2}& 
\end{array}
\right].
\end{equation}

Put \begin{equation}\label{tau1-def}
\tau_1 = \left[\begin{array}{cc}
h_1 & \\
& ^t{h_1}^{-1} 
\end{array}\right]
\end{equation} where $h_1$ is as in Proposition \ref{O2C-acts-GmodBbar}. Denote  by $\overline{B}_1$ the image of the subgroup $\overline{B}$ under \eqref{embedingofleviSp_{2n}(F)-Siegel} in $Sp_{4}(F).$ We have $$\overline{B}_1=\left\{\left[\begin{array}{cccc}
b & 0 &  &  \\
c & d &   &  \\
&  & b^{-1} & -c b^{-1} d^{-1} \\ 
&  & 0 & d^{-1}  
\end{array} \right] : b, d\in F^{\times}, c\in F  \right\}.$$
Similarly, write $B_1$ for the image of $B$ under \eqref{embedingofleviSp_{2n}(F)-Siegel} in $Sp_{4}(F).$ Then, $$B_1=\left\{\left[\begin{array}{cccc}
b & c &  &  \\
0 & d &   &  \\
&  & b^{-1} & 0 \\ 
&  & -b^{-1}c d^{-1} & d^{-1}  
\end{array} \right]: b,d\in F^{\times},c\in F\right\}.$$

\noindent We note the following lemma.

\begin{lemma}\label{Spsi-OrbitinSiegelDegA} 
	 $H_1\backslash P/ S_{\psi}  = \left\{I_4, \tau_1\right\}.$

\end{lemma}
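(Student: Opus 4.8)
The goal is to compute the double coset space $H_1 \backslash P / S_\psi$, where $H_1 = \sigma_1 P \sigma_1^{-1} \cap P$ is the explicit subgroup displayed above and $S_\psi = M_\psi N$ with $M_\psi \cong O_2(F,C)$. The natural approach is to translate the double coset problem into an orbit problem on a suitable homogeneous space. First I would observe that $H_1$ contains $N$ (indeed, scanning the matrix description of $H_1$, every element has upper-left block in $\overline{B}$ and arbitrary symmetric ``$X$''-part of the appropriate shape—one should double check $N \subset H_1$, which holds since the constraint on $H_1$ only restricts the $GL_2$-part to $\overline{B}$). Since $N \subseteq H_1$ and $S_\psi = M_\psi N$, the double coset space $H_1 \backslash P / S_\psi$ is in bijection with $(H_1 \cap M) \backslash M / M_\psi$ via the projection $P \to P/N \cong M$; concretely $H_1 \cap M = \overline{B}_1$ (the image of $\overline{B}$ under the Siegel Levi embedding) and $M_\psi$ is the image of $O_2(F,C)$. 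So the problem reduces to computing $\overline{B} \backslash GL_2(F) / O_2(F,C)$.

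\textbf{Key steps.} The plan is then to invoke Proposition \ref{O2C-acts-GmodBbar}, which states precisely that $\overline{B} \backslash GL_2(F) / O_2(F,C) = \{h_0, h_1\}$ with $h_0 = I_2$ and $h_1 = \left[\begin{smallmatrix} 0 & 1 \\ 1 & 0\end{smallmatrix}\right]$. Lifting these representatives back to $P$ via the embedding \eqref{embedingofleviSp_{2n}(F)-Siegel}: $h_0$ lifts to $I_4$ and $h_1$ lifts to $\tau_1$ as defined in \eqref{tau1-def}. Therefore $H_1 \backslash P / S_\psi = \{I_4, \tau_1\}$. The steps in order: (1) verify $N \subseteq H_1$ by direct inspection of the displayed matrix form of $H_1$; (2) verify $H_1 \cap M = \overline{B}_1$, again by inspection; (3) set up the bijection $H_1 \backslash P / S_\psi \;\xrightarrow{\sim}\; \overline{B}_1 \backslash M / M_\psi$ coming from $P = MN$ with $N$ normal and $N \subseteq H_1 \cap S_\psi$—this is a routine double-coset reduction lemma; (4) transport along the isomorphism $M \cong GL_2(F)$ and apply Proposition \ref{O2C-acts-GmodBbar}; (5) lift representatives.

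\textbf{Main obstacle.} The only genuine content is step (3), the reduction of the double coset space in $P$ to one in $M$. The point is that when $N$ is normal in $P$, contained in $H_1$, and contained in $S_\psi$, every $(H_1, S_\psi)$-double coset in $P$ meets $M$ and two elements of $M$ lie in the same $(H_1,S_\psi)$-double coset iff they lie in the same $(H_1 \cap M, M_\psi)$-double coset; this is elementary group theory but worth spelling out, since it is exactly the mechanism that makes the computation tractable. Everything else—the matrix bookkeeping in steps (1) and (2), and the lift in step (5)—is mechanical, and the heavy lifting ($GL_2$ orbit count) has already been done in Proposition \ref{O2C-acts-GmodBbar}. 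I would also remark, as the paper does in Remark \ref{KlingenCtrivialcoset}, that one could equally use $B$ in place of $\overline{B}$ with the same representatives, which is convenient if one prefers to work with $B_1$.
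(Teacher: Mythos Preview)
Your overall strategy matches the paper's, but step~(1) contains a factual error: $N$ is \emph{not} contained in $H_1$. Look again at the displayed matrix form of $H_1$ in \S\ref{Action-of-Spsi-on-Sp_4modP}: the upper-right $2\times 2$ block is $\left[\begin{smallmatrix} 0 & y \\ dyb^{-1} & w\end{smallmatrix}\right]$, so the $(1,1)$ entry of the $X$-part is forced to be zero. Equivalently, from Proposition~\ref{Stabilizerfororbits}(2) with $n=2$, $j=1$, the block $X$ has the shape $\left[\begin{smallmatrix} 0 & * \\ * & *\end{smallmatrix}\right]$. Hence $H_1 \cap N = N^1 \subsetneq N$, and your claimed containment fails.

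Fortunately the reduction you want in step~(3) does not actually require $N \subset H_1$; it only needs $N \subset S_\psi$ together with the semidirect decomposition $H_1 = \overline{B}_1 N^1$ (with $N^1 \subset N$). This is exactly how the paper proceeds: since $N \subset S_\psi$ every double coset $H_1 p S_\psi$ meets $M$, and for $m,m' \in M$ one checks, using $H_1 = \overline{B}_1 N^1$ and $S_\psi = M_\psi N$ with $M$ normalizing $N$, that $H_1 m S_\psi = H_1 m' S_\psi$ iff $\overline{B}_1 m M_\psi = \overline{B}_1 m' M_\psi$. After that, your steps~(4) and~(5) (transport to $GL_2$ and invoke Proposition~\ref{O2C-acts-GmodBbar}) are exactly the paper's argument. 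So the route is right; only the justification of the reduction needs to be corrected to use $H_1 = \overline{B}_1 N^1$ rather than the false inclusion $N \subset H_1$.
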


\begin{proof}
	Consider the action of $S_{\psi}$ on $H_1 \backslash P.$ Put  $N^{1}= \left\{ \left[\begin{array}{cc}
	I_2 & \left[\begin{array}{cc}
	0& y \\
	y & z
	\end{array} \right] \\
	& I_2
	\end{array}\right] : y, z \in F\right\}.$ Note that $\overline{B}_1$ normalizes $N^{1}$ and  we can write $H_1 = \overline{B}_1 N^{1}.$   
	Consequently, $H_1 \backslash P/ S_{\psi}  = \overline{B}_1 N^{1} \backslash MN/ M_{\psi} N,$ and can be identified with $ \overline{B}_1 \backslash M/ M_{\psi}.$ This further can be identified with $\overline{B} \backslash GL_{2}(F)/O_{2}(F,C).$ The lemma now follows from Proposition \ref{O2C-acts-GmodBbar}. 
\end{proof}

\begin{proposition}\label{S-psi-orbit-Sp4/P}
	$P\backslash Sp_{4}(F)/S_{\psi}=\{I_4, {\sigma_1}^{-1}, (\tau_1\sigma_1)^{-1}, {\sigma_2}^{-1}\}.$
\end{proposition}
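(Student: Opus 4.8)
The plan is to apply Lemma~\ref{hijgj} with $G = Sp_4(F)$, $G_0 = P$, $H = P$ (acting on $P\backslash Sp_4(F)$), and $K = S_\psi$. The first hypothesis of that lemma is supplied by Proposition~\ref{Pdoublecoset} in the case $n=2$: the elements $\sigma_0 = I_4$, $\sigma_1$, $\sigma_2$ give a complete set of representatives for the $P$-orbits on $P\backslash Sp_4(F)$, i.e., $P g_0, P g_1, P g_2$ with $g_j = \sigma_j^{-1}$. (By Remark~\ref{open-orbits}(1), the orbit of $X_0$ is the closed point, the orbit of $X_2$ is open, and there are exactly three orbits.) What remains is to decompose each of these three $P$-orbits into $S_\psi$-orbits, which is the content of the second hypothesis of Lemma~\ref{hijgj}.

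First I would handle the orbit $Orb_P(X_0) = \{P\}$, which is a single point; the stabilizer in $P$ of the base point under the relevant action is all of $P$, and since $S_\psi \subseteq P$, this orbit consists of a single $S_\psi$-orbit, contributing the representative $I_4$. Next I would handle $Orb_P(X_2)$, corresponding to $\sigma_2$; here $H_0 = \sigma_2 P \sigma_2^{-1} \cap P = M$ by Proposition~\ref{Stabilizerfororbits}(3), and one must analyze the $S_\psi$-action on $M\backslash P$. As in the proof of Lemma~\ref{Spsi-OrbitinSiegelDegA}, $M\backslash P / S_\psi = M\backslash MN / M_\psi N$ can be identified with $M\backslash M / M_\psi$, which is a single point, so $Orb_P(X_2)$ is a single $S_\psi$-orbit contributing $\sigma_2^{-1}$. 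The main work is the middle orbit $Orb_P(X_1)$, corresponding to $\sigma_1$: here the stabilizer is $H_1 = \sigma_1 P \sigma_1^{-1}\cap P$ as computed explicitly above, and I would invoke Lemma~\ref{Spsi-OrbitinSiegelDegA}, which says $H_1\backslash P / S_\psi = \{I_4, \tau_1\}$. Thus $Orb_P(X_1)$ splits into exactly two $S_\psi$-orbits, with representatives obtained by translating by $\sigma_1^{-1}$ and by $\sigma_1^{-1}$ twisted by $\tau_1$.

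Assembling these via Lemma~\ref{hijgj}: with $g_0 = I_4$, $g_1 = \sigma_1^{-1}$, $g_2 = \sigma_2^{-1}$ and the inner representatives $\{I_4\}$ for $j=0$, $\{I_4, \tau_1\}$ for $j=1$, $\{I_4\}$ for $j=2$, the complete set of $(P, S_\psi)$-double coset representatives is $\{g_0, g_1, g_1 \tau_1^{-1}, g_2\} = \{I_4, \sigma_1^{-1}, \sigma_1^{-1}\tau_1^{-1}, \sigma_2^{-1}\}$. Finally I would note $\tau_1^{-1} = \tau_1$ (since $h_1^{-1} = h_1$, as remarked in Remark~\ref{KlingenCtrivialcoset}), so $\sigma_1^{-1}\tau_1^{-1} = \sigma_1^{-1}\tau_1 = (\tau_1^{-1}\sigma_1)^{-1} = (\tau_1\sigma_1)^{-1}$, giving exactly the asserted set $\{I_4, \sigma_1^{-1}, (\tau_1\sigma_1)^{-1}, \sigma_2^{-1}\}$.

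The main obstacle is bookkeeping rather than conceptual: one must be careful that the identifications of double-coset spaces used in Lemma~\ref{Spsi-OrbitinSiegelDegA} (namely $H_1 = \overline{B}_1 N^1$, and the reduction $\overline{B}_1 N^1 \backslash MN / M_\psi N \cong \overline{B}\backslash GL_2(F)/O_2(F,C)$) are valid, and that the translation convention $g\cdot Ph = Phg^{-1}$ is applied consistently so that the representatives come out as inverses in the right order. I expect the verification that $\sigma_1^{-1}\tau_1$ and $(\tau_1\sigma_1)^{-1}$ name the same double coset to require a short explicit check using $\tau_1 = \tau_1^{-1}$, but no genuine difficulty beyond that. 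Everything else is a direct citation of Propositions~\ref{Pdoublecoset}, \ref{Stabilizerfororbits}, Lemma~\ref{Spsi-OrbitinSiegelDegA}, and Lemma~\ref{hijgj}.
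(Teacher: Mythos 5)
Your proposal is correct and follows essentially the same route as the paper: cite Lemma~\ref{lemmafororbitsGeneral}/Proposition~\ref{Pdoublecoset} for the three $P$-orbits, observe that the closed and open $P$-orbits are single $S_\psi$-orbits (the open one because $N\subseteq S_\psi$), invoke Lemma~\ref{Spsi-OrbitinSiegelDegA} to split the middle orbit into two, and assemble via Lemma~\ref{hijgj}. One tiny slip: the stabilizer $\sigma_2 P\sigma_2^{-1}\cap P = M$ is $H_2$ in the paper's notation (not $H_0$), and the detour through $\tau_1^{-1}=\tau_1$ is unnecessary since $\sigma_1^{-1}\tau_1^{-1}=(\tau_1\sigma_1)^{-1}$ directly.
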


\begin{proof}
	We consider the action of $S_{\psi}$ on $P \backslash Sp_{4}(F).$ Put $V_0= \langle e_1, e_2 \rangle, V_1= \langle e_1^{\vee}, e_2 \rangle$ and $V_2= \langle e_1^{\vee}, e_2^{\vee} \rangle.$ By Proposition \ref{Pdoublecoset} (applied with $n=2$),  $\sigma_j(V_0)=V_j$ for $0\leq j \leq 2.$ By Lemma \ref{lemmafororbitsGeneral} (applied with $n=2$ and $k=2$), the orbits for the action of $P$ on $P \backslash Sp_{4}(F)$ are $Orb_P(V_0)$($=\{V_0\}$), $Orb_P(\sigma_1\cdot V_0)$ and $Orb_P(\sigma_2\cdot V_0).$ We shall consider the $S_{\psi}$-action on each of these $P$-orbits so as to apply Lemma \ref{hijgj}.  
	
	\textit{Case}(1): It is clear that $S_{\psi}$ stabilizes $V_0$ and hence $Orb_{P}(V_0)=\{V_0\}=Orb_{S_{\psi}}(V_0).$

	\textit{Case}(2): $Orb_{P}(V_2)=M \backslash P$ as $M$ is the stabilizer in $P$ of $V_2$ by Proposition \ref{Stabilizerfororbits} (applied with $n=2$). For any $p\in P,$ as we can write $p=mn$ with $m\in M$ and $n\in N,$ the double coset $Mp S_{\psi}=MmnS_{\psi}=MS_{\psi}.$ Thus, $M \backslash P/S_{\psi}=\{I_4\}$ and $Orb_{P}(V_2)=Orb_{S_{\psi}}(V_2)=Orb_{S_{\psi}}(\sigma_2\cdot V_0).$

	\textit{Case}(3): By Proposition \ref{Stabilizerfororbits} (applied with $n=2$)  we have $Orb_P(V_1)= H_1 \backslash P.$ By Lemma \ref{Spsi-OrbitinSiegelDegA}, we have $H_1\backslash P/ S_{\psi}=\{I_4,\tau_1\}.$ Thus, $Orb_P(V_1)= H_1 \backslash P=Orb_{S_{\psi}}(V_1) \bigsqcup Orb_{S_{\psi}}(\tau_1\cdot V_1)=Orb_{S_{\psi}}(\sigma_1\cdot V_0) \bigsqcup Orb_{S_{\psi}}(\tau_1\sigma_1\cdot V_0).$
	We now apply Lemma \ref{hijgj} with $X= P\backslash Sp_{4}(F), G=Sp_{4}(F),H=G_0=P$ and  $K=S_{\psi}$ to complete the proof.
\end{proof}

\subsection{Orbits for the $S_{\psi}$-action on $Q \backslash Sp_{4}(F)$}
\label{Action-of-Spsi-on-Sp4modQ}

By Proposition \ref{StabilizerfororbitsGeneral} (applied with $n=2$ and $k=1$),  
\begin{equation*}
D_0 = P \cap Q = \left\{\left[\begin{array}{cccc}
a & x & y & z \\
0 & b & (bz-wx)a^{-1} & w \\
0 & 0 & a^{-1} & 0 \\ 
0 & 0 & -a^{-1}x b^{-1} & b^{-1}  
\end{array} \right] : a, b \in F^{\times}, x, y, z, w \in F\right\}
\end{equation*} 
and
\begin{equation*}
D_1 = \left\{\left[\begin{array}{cccc}
b & 0 & 0 & 0 \\
c & d & 0 & y \\
0 & 0 & b^{-1} & -c b^{-1} d^{-1} \\ 
0 & 0 & 0 & d^{-1}  
\end{array} \right] : b, d\in F^{\times}, c, y\in F  \right\}.
\end{equation*}
In the notation of Proposition \ref{PdoublecosetGeneral} (applied with $n=2$), the element $w_1 = \sigma_1.$ We note the following lemmas.

\begin{lemma}\label{Spsi-OrbitinKlingenDegA}
	
	$D_0 \backslash P/S_{\psi} = \left\{I_4, \tau_1\right\},$ where $\tau_1$ is as in \eqref{tau1-def}.
\end{lemma}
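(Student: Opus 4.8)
\textbf{Proof proposal for Lemma \ref{Spsi-OrbitinKlingenDegA}.} The plan is to mimic the argument used in Lemma \ref{Spsi-OrbitinSiegelDegA}, reducing the double coset space $D_0\backslash P/S_\psi$ to a double coset space inside $GL_2(F)$ where Proposition \ref{O2C-acts-GmodBbar} (or rather its variant in Remark \ref{KlingenCtrivialcoset}) applies directly. First I would record a convenient factorization of $D_0 = P\cap Q$. Inspecting the explicit description of $D_0$ above, the $GL_2(F)$-part $g$ of an element of $D_0$ is upper triangular, i.e.\ lies in $B$; so $D_0$ sits inside $P = MN$ with its $M$-component landing in $B_1$ (the image of the Borel $B$ under \eqref{embedingofleviSp_{2n}(F)-Siegel}). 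I would isolate the unipotent part of $D_0$: let $N^{0}$ denote the subgroup of $N$ consisting of those $X\in SM_2(F)$ that occur in $D_0$ when $g$ is the identity (from the displayed form of $D_0$, the matrix $X=\left[\begin{smallmatrix} y & z \\ z & w\end{smallmatrix}\right]$ with the appropriate entries, which gives all of $N$ in fact when $a=b=1$, $x=0$). The key structural point is that $B_1$ normalizes this unipotent subgroup and $D_0 = B_1 \cdot N^{0}$ as a semidirect-type factorization inside $P$.

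With such a factorization in hand, the reduction proceeds exactly as before: $D_0\backslash P/S_\psi = B_1 N^{0}\backslash MN/M_\psi N$, and since $N^{0}\subseteq N$ and $N$ is normal in $S_\psi = M_\psi N$, the $N$-parts collapse and this space is identified with $B_1\backslash M/M_\psi$. Under the identification $M\cong GL_2(F)$, $B_1\leftrightarrow B$ and $M_\psi\leftrightarrow O_2(F,C)$, so $D_0\backslash P/S_\psi \cong B\backslash GL_2(F)/O_2(F,C)$. Now I would invoke Remark \ref{KlingenCtrivialcoset}, which asserts precisely that $B\backslash GL_2(F)/O_2(F,C) = \{h_0,h_1\}$ with $h_0 = I_2$ and $h_1 = \left[\begin{smallmatrix} 0 & 1 \\ 1 & 0\end{smallmatrix}\right]$. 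Transporting $h_0,h_1$ back through the embedding \eqref{embedingofleviSp_{2n}(F)-Siegel} gives $I_4$ and $\tau_1$ respectively (by the definition \eqref{tau1-def} of $\tau_1$), which yields the claim.

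The main obstacle I anticipate is not conceptual but bookkeeping: verifying carefully that $D_0$ genuinely factors as $B_1 N^{0}$ with $B_1$ normalizing $N^{0}$, i.e.\ checking against the explicit matrix description of $D_0 = P\cap Q$ that every element decomposes uniquely as an element of $B_1$ times an element of $N^{0}$, and that conjugation of $N^{0}$ by $B_1$ stays inside $N^{0}$ (equivalently inside $N$). One should also double-check that the constraint relating $x,y,z,w$ in the displayed form of $D_0$ (the entry $(bz-wx)a^{-1}$) is automatically the symplectic condition and does not impose an extra restriction beyond ``$g\in B$'', so that the quotient $B_1\backslash M/M_\psi$ is not cut down further. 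Once that is settled, the identification with $B\backslash GL_2(F)/O_2(F,C)$ and the appeal to Remark \ref{KlingenCtrivialcoset} are immediate.
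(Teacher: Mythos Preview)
Your proposal is correct and follows essentially the same route as the paper's proof: the paper observes directly that $D_0 = B_1 N$ (your ad hoc ``$N^{0}$'' is indeed all of $N$, as you note parenthetically), then reduces $D_0\backslash P/S_\psi$ to $B_1\backslash M/M_\psi \cong B\backslash GL_2(F)/O_2(F,C)$ and invokes Remark~\ref{KlingenCtrivialcoset}. One cosmetic point: avoid the label $N^{0}$ here, since the paper reserves that symbol for a different (one-dimensional) subgroup of $N$ in \S\ref{SubgroupsofN}.
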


\begin{proof}

	We consider the action of $S_{\psi}$ on $D_0 \backslash P.$ Note that $B_1$ normalizes $N$ and $D_0 = B_1N.$ Also, $D_0 \backslash P/S_{\psi} = B_1 N \backslash MN/ M_{\psi} N.$ Consequently, $B_1 N \backslash MN/ M_{\psi} N$ can be identified with $ B_1\backslash M/M_{\psi} $ which further can be identified with $ B \backslash GL_{2}(F)/O_{2}(F,C).$ 
	The statement of the lemma now follows from Remark \ref{KlingenCtrivialcoset}.\qedhere \end{proof}

\begin{lemma}\label{SpsiOrbitNontrivialKlingenGegA}

	$D_1 \backslash P/S_{\psi} = \left\{I_4, \tau_1\right\},$ where $\tau_1$ is as in \eqref{tau1-def}.
\end{lemma}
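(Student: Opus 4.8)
The plan is to mimic the proof of Lemma \ref{Spsi-OrbitinKlingenDegA}, namely to realize the double coset space $D_1\backslash P/S_\psi$ as a double coset space inside $GL_2(F)$ and then invoke Proposition \ref{O2C-acts-GmodBbar}. First I would analyze the structure of $D_1$ as a subgroup of $P=MN$. Writing a general element of $D_1$ in block form $\left[\begin{smallmatrix} g & X \\ 0 & {}^tg^{-1}\end{smallmatrix}\right]$, one sees that the $GL_2$-part $g$ ranges over $\overline{B}$ (lower triangular matrices $\left[\begin{smallmatrix} b & 0 \\ c & d\end{smallmatrix}\right]$), while the $M_2(F)$-part $X$ is constrained to be of the form $\left[\begin{smallmatrix} 0 & 0 \\ 0 & y\end{smallmatrix}\right]$ (so $g^{-1}X$ lies in the required symmetric subspace). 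Thus I want to exhibit $D_1 = \overline{B}_1 \cdot N''$ where $\overline{B}_1$ is the image of $\overline{B}$ under \eqref{embedingofleviSp_{2n}(F)-Siegel} and $N''$ is the subgroup of $N$ corresponding to symmetric matrices of the form $\left[\begin{smallmatrix} 0 & 0 \\ 0 & y\end{smallmatrix}\right]$, and check that $\overline{B}_1$ normalizes $N''$ so that this is a genuine (semidirect) factorization.

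Next I would run the reduction exactly as in Lemma \ref{Spsi-OrbitinKlingenDegA} and Lemma \ref{Spsi-OrbitinSiegelDegA}. Since $D_1 = \overline{B}_1 N''$ with $N'' \subseteq N$, and $S_\psi = M_\psi N$ with $N$ normal in $S_\psi$, the double coset space $D_1\backslash P/S_\psi = \overline{B}_1 N''\backslash MN/M_\psi N$ collapses: the $N$ on the right absorbs $N''$ (and all of $N$), leaving $\overline{B}_1\backslash M/M_\psi$. Under the identification $M\cong GL_2(F)$ and $M_\psi\cong O_2(F,C)$ (Lemma \ref{SpsiMpsi}), this is precisely $\overline{B}\backslash GL_2(F)/O_2(F,C)$, which by Proposition \ref{O2C-acts-GmodBbar} equals $\{h_0,h_1\} = \{I_2, \left[\begin{smallmatrix} 0 & 1 \\ 1 & 0\end{smallmatrix}\right]\}$. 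Lifting back via \eqref{embedingofleviSp_{2n}(F)-Siegel}, the representatives become $I_4$ and $\tau_1$ as defined in \eqref{tau1-def}, which is the claim.

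The one point requiring genuine care — the main (minor) obstacle — is verifying that the $M_2(F)$-component of $D_1 = w_1 P_1 w_1^{-1}\cap P = \sigma_1 Q\sigma_1^{-1}\cap P$ is exactly the one-parameter group $\left\{\left[\begin{smallmatrix} 0 & 0 \\ 0 & y\end{smallmatrix}\right] : y\in F\right\}$ and not something larger, and that $\overline{B}_1$ really does normalize the corresponding unipotent subgroup $N''$ of $N$. This is a direct matrix computation using the explicit description of $D_1$ already recorded in the text (the block-matrix display preceding this lemma), together with the fact that conjugation of $N\cong SM_2(F)$ by $g\in GL_2(F)$ acts by $X\mapsto g X\, {}^tg$: one checks that $\overline{B}$ (lower-triangular) preserves the line $\mathbb{F}\cdot\left[\begin{smallmatrix} 0 & 0 \\ 0 & 1\end{smallmatrix}\right]$ in $SM_2(F)$, which is immediate. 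Once the factorization $D_1 = \overline{B}_1 N''$ is in hand, everything else is formal and parallels the two preceding lemmas verbatim.
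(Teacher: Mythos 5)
Your proposal is correct and follows exactly the paper's own argument: decompose $D_1 = \overline{B}_1 N^0$ (your $N''$ is the paper's $N^0$), verify $\overline{B}_1$ normalizes $N^0$, absorb $N^0$ and $N$ into the right factor $M_\psi N$ to reduce to $\overline{B}_1\backslash M/M_\psi \cong \overline{B}\backslash GL_2(F)/O_2(F,C)$, and invoke Proposition \ref{O2C-acts-GmodBbar}. The matrix checks you flag (that $X$ ranges over $\left[\begin{smallmatrix} 0 & 0 \\ 0 & y\end{smallmatrix}\right]$ and that the conjugation $Y\mapsto gY\,{}^tg$ preserves that line when $g$ is lower triangular) are indeed the only verifications needed and both go through.
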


\begin{proof}
	
	Consider the action of $S_{\psi}$ on $D_1 \backslash P$ and put $N^0=	\left\{\left[\begin{array}{cc}
	I_2 & \left[\begin{array}{cc}
	0&0 \\
	0 & x
	\end{array}\right] \\
	&I_2  
	\end{array} \right] : x \in F \right\}.$  Observe that $\overline{B}_1$ normalizes $N^{0}$ and we have $D_1 = \overline{B}_1 N^0.$ Thus, $D_1 \backslash P/S_{\psi}$ is equal to $ \overline{B}_1 N^0 \backslash MN/M_{\psi} N.$ Also, $ \overline{B}_1 N^0 \backslash MN/ M_{\psi} N $ can be identified with $ \overline{B}_1 \backslash M/ M_{\psi}.$ On the other hand, $ \overline{B}_1 \backslash M/ M_{\psi}$ is identified with $\overline{B} \backslash GL_{2}(F)/ O_{2}(F,C).$  The lemma now follows from Proposition \ref{O2C-acts-GmodBbar}. \end{proof}

\noindent The next proposition gives double coset representatives for $Q\backslash Sp_{4}(F)/S_{\psi}.$ 

\begin{proposition}\label{S-psi-orbit-Sp4/Q}
	$Q\backslash Sp_{4}(F)/S_{\psi}=\{I_4, {\tau_1}^{-1}, {\sigma_1}^{-1}, (\tau_1\sigma_1)^{-1}\},$ where $\sigma_1$ and $\tau_1$ are as in \eqref{sigma1-def} and \eqref{tau1-def} respectively.
\end{proposition}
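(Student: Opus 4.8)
The plan is to imitate the strategy already used for Proposition \ref{S-psi-orbit-Sp4/P}, but now with $G_0 = Q$ in place of $P$. The key input is Remark \ref{open-orbits}(2): for $n = 2$, $k = 1$, the action of $P$ on $Q \backslash Sp_4(F)$ has exactly two orbits, namely $Orb_P(X_0)$ (closed) and $Orb_P(X_1)$ (open), with representatives coming from $w_0 = I_4$ and $w_1 = \sigma_1$ via $w_j(X_0) = X_j$ (Proposition \ref{PdoublecosetGeneral}). So I would set $X = Q \backslash Sp_4(F)$, $G = Sp_4(F)$, $H = P$, $K = S_\psi$, and apply Lemma \ref{hijgj}. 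This reduces the problem to two sub-computations: decomposing each $P$-orbit $Orb_P(X_j)$ into $S_\psi$-orbits.

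First I would handle the closed orbit. By Proposition \ref{StabilizerfororbitsGeneral} (applied with $n = 2$, $k = 1$), the stabilizer in $P$ of $X_0$ is $D_0 = P \cap Q$, so $Orb_P(X_0) = D_0 \backslash P$. By Lemma \ref{Spsi-OrbitinKlingenDegA}, $D_0 \backslash P / S_\psi = \{I_4, \tau_1\}$, so this $P$-orbit splits into two $S_\psi$-orbits, with representatives $g_0 h_{01}^{-1} = I_4$ and $g_0 h_{02}^{-1} = \tau_1^{-1}$ (recall $h_1 = h_1^{-1}$, so $\tau_1^{-1} = \tau_1$ but I would keep the inverse notation to match Lemma \ref{hijgj}). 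Next I would handle the open orbit: by Proposition \ref{StabilizerfororbitsGeneral}, the stabilizer in $P$ of $X_1$ is $D_1 = w_1 P_1 w_1^{-1} \cap P = \sigma_1 Q \sigma_1^{-1} \cap P$, so $Orb_P(X_1) = D_1 \backslash P$ once we translate by $\sigma_1$; more precisely, $Orb_P(\sigma_1 \cdot X_0)$ as a $P$-set is $D_1 \backslash P$. By Lemma \ref{SpsiOrbitNontrivialKlingenGegA}, $D_1 \backslash P / S_\psi = \{I_4, \tau_1\}$, so this orbit also splits into two $S_\psi$-orbits, with representatives $\sigma_1^{-1}$ and $(\tau_1 \sigma_1)^{-1}$ (again using $g_1 = \sigma_1$ and the coset reps $h_{1i}$).

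Assembling via Lemma \ref{hijgj}, a complete set of $(Q, S_\psi)$-double coset representatives is $\{g_j h_{ji}^{-1}\} = \{I_4, \tau_1^{-1}, \sigma_1^{-1}, (\tau_1\sigma_1)^{-1}\}$, which is the claimed answer. The routine bookkeeping I would need to get right is the exact dictionary between ``$h \cdot (g \cdot x_0)$'' in Lemma \ref{hijgj} and the double coset $G_0 g h^{-1} K$, and checking that the representatives produced by Lemmas \ref{Spsi-OrbitinKlingenDegA} and \ref{SpsiOrbitNontrivialKlingenGegA} are indeed the $h_{ji}$'s appearing there (this is purely formal once one tracks that in both lemmas the orbit of $I_4$ is listed first). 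The main obstacle — though it is already dispatched by the two preceding lemmas — is verifying that the $S_\psi$-action on the open $P$-orbit genuinely produces exactly these two orbits and no more; this rests on the identification of the relevant double coset space with $\overline{B} \backslash GL_2(F) / O_2(F, C)$ and hence on Proposition \ref{O2C-acts-GmodBbar}. Since all of this is in hand, the proof is short: cite Remark \ref{open-orbits}(2), Propositions \ref{StabilizerfororbitsGeneral} and \ref{PdoublecosetGeneral}, Lemmas \ref{Spsi-OrbitinKlingenDegA}, \ref{SpsiOrbitNontrivialKlingenGegA}, and \ref{hijgj}, and read off the representatives.
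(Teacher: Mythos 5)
Your proposal is correct and follows the same approach as the paper: apply Lemma \ref{hijgj} with $G = Sp_4(F)$, $G_0 = Q$, $H = P$, $K = S_\psi$, identify the two $P$-orbits on $Q\backslash Sp_4(F)$ via Proposition \ref{PdoublecosetGeneral} and Proposition \ref{StabilizerfororbitsGeneral}, and split each into $S_\psi$-orbits using Lemmas \ref{Spsi-OrbitinKlingenDegA} and \ref{SpsiOrbitNontrivialKlingenGegA}. The only cosmetic difference is that you invoke Remark \ref{open-orbits}(2) to name the two $P$-orbits, while the paper goes directly through Lemma \ref{lemmafororbitsGeneral}; these are the same thing.
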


\begin{proof}
	We consider here the action of $S_{\psi}$ on $Q \backslash Sp_{4}(F).$	Put  $X_0= \langle e_1 \rangle$ and $X_1= \langle e_1^{\vee} \rangle.$ We apply Proposition \ref{PdoublecosetGeneral} (with $n=2$ and $k=1$) to see that $\sigma_j(X_0)=X_j$ for $j\in \{0,1\}.$ By Lemma \ref{lemmafororbitsGeneral} (applied with $n=2$ and  $k=1$), the orbits for the action of $P$ on $Q \backslash Sp_{4}(F)$ are $Orb_P(X_0)$ and $Orb_P(\sigma_1\cdot X_0).$ To apply Lemma \ref{hijgj}, we consider the $S_{\psi}$-action on both of these $P$-orbits one-by-one.
	
	\textit{Case}(1): Note that $Orb_P(X_0)= D_0 \backslash P,$ as $D_0$ is the stabilizer in $P$ of $X_0$ (by Proposition \ref{StabilizerfororbitsGeneral} applied with $n=2$ and $k=1$). We consider the $S_{\psi}$ action on $D_0\backslash P.$ By Lemma \ref{Spsi-OrbitinKlingenDegA}, we have $D_0 \backslash P/S_{\psi} = \left\{I_4, \tau_1\right\}.$ Thus, $Orb_P(X_0)=Orb_{S_{\psi}}(X_0) \bigsqcup Orb_{S_{\psi}}(\tau_1\cdot X_0).$

	\textit{Case}(2): $D_1$ is the stabilizer in $P$ of $X_1$ by Proposition \ref{StabilizerfororbitsGeneral}. Therefore, $Orb_P(X_1)= D_1 \backslash P$ and by Lemma \ref{SpsiOrbitNontrivialKlingenGegA}, we have $D_1 \backslash P/ S_{\psi}=\{I_4,\tau_1\}.$ Thus,  $$Orb_P(X_1)=Orb_{S_{\psi}}(X_1) \bigsqcup Orb_{S_{\psi}}(\tau_1\cdot X_1)=Orb_{S_{\psi}}(\sigma_1 \cdot X_0) \bigsqcup Orb_{S_{\psi}}(\tau_1 \sigma_1\cdot X_0).$$ 
	We apply Lemma \ref{hijgj} with  $X= Q \backslash Sp_{4}(F), G=Sp_{4}(F),H=P, G_0=Q$ and $K=S_{\psi}$ to complete the proof of the proposition.
\end{proof}

\subsection{Some key subgroups arising in the computation}
\label{KeySubgroups}

In this section, we will define some key subgroups of $Sp_{4}(F)$ which arise repeatedly in the computation of the twisted Jacquet module and prove some properties concerning them. We begin by listing certain subgroups of $N.$

\subsubsection{Subgroups of $N$}\label{SubgroupsofN}

Put
\begin{equation*}	
N^0=	\left\{\left[\begin{array}{cc}
I_2 & \left[\begin{array}{cc}
0&0 \\
0 & x
\end{array}\right] \\
&I_2  
\end{array} \right] : x \in F \right\},	N^1 = \left\{ \left[\begin{array}{cc}
I_2 & \left[\begin{array}{cc}
0& y \\
y & z
\end{array} \right] \\
& I_2
\end{array}\right] : y, z \in F\right\}, \end{equation*}

\begin{equation*}
N^2=	\left\{\left[\begin{array}{cc}
I_2 & \left[\begin{array}{cc}
x&0 \\
0 & 0
\end{array}\right] \\
&I_2  
\end{array} \right] : x \in F \right\}, N^3:=N\cap U=\left\{\left[\begin{array}{cc}
I_2 & \left[\begin{array}{cc}
y & z \\
z & 0
\end{array} \right]\\
0 & I_2
\end{array}\right]:  y, z \in F \right\} \end{equation*} 
and
\begin{equation*}
N^4=\left\{\left[\begin{array}{cc}
I_2 &\left[\begin{array}{cc}
0& y \\
y &0
\end{array}\right]  \\
& I_2  
\end{array} \right] : y \in F\right\}. \end{equation*}

Note that the subgroups $N^0$ and $N^1$ have already featured in the proofs of Lemma \ref{SpsiOrbitNontrivialKlingenGegA} and Lemma \ref{Spsi-OrbitinSiegelDegA} respectively.

\subsubsection{Subgroups of $M$}\label{SubgroupsofM}

Put 
\begin{equation*}\label{DefnofM^0andM^1}
	M^1=\left\{\left[\begin{array}{cccc}
1 & & & \\
y & 1 & & \\
& & 1 & -y \\
& & & 1 \end{array} \right] : y\in F \right\}, M^2= \left\{\diag(a,d,a^{-1},d^{-1}) : d \in F^{\times}, a^2 =1\right\}
\end{equation*}
and
\begin{equation*}\label{DefnofM^2andM^3}
M^3= \left\{\diag(a,d,a^{-1},d^{-1}) :  a \in F^{\times}, d^2 =1\right\}.\end{equation*}

In the following lemma, we note the action of the character $\psi$ and the conjugation action of $M^2$ on each of the subgroups $N^j$ ($0\leq j \leq 4)$ of $N.$

\begin{lemma}\label{Psiaction-starcondition}
	\begin{enumerate}
		\item On the subgroups $N,$ $N^2$ and $N^3$ the character $\psi$ is non-trivial.
		
		\item On the subgroups $N^0$ and $N^1,$ $\psi$ is trivial.
		
		\item The group $M^2$ normalizes each of the groups $N^0,N^1, N^2$ and $N^4.$ Moreover, $M^2$ fixes $N^2$ point-wise under conjugation.
	\end{enumerate}	
\end{lemma}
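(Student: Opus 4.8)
The statement is purely a matter of explicit matrix bookkeeping, so the plan is to compute each assertion directly from the coordinate descriptions of $N$ and $M^2$ given in \S\ref{SubgroupsofN} and \S\ref{SubgroupsofM}, together with the formula \eqref{CharacterCorrspondngtoC} for $\psi=\psi_{_C}$ with $C=\diag(\gamma,0)$.

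First I would prove (1) and (2). A general element of $N$ corresponds to a symmetric matrix $X=\left[\begin{smallmatrix} x & y \\ y & z\end{smallmatrix}\right]$, and by \eqref{CharacterCorrspondngtoC} we have $\psi\left(\left[\begin{smallmatrix} I_2 & X \\ 0 & I_2\end{smallmatrix}\right]\right)=\psi_{_0}(\gamma x)$, which depends only on the top-left entry $x$. For $N^2$ the parameter sits exactly in that top-left slot, so $\psi$ restricted to $N^2$ is $x\mapsto\psi_{_0}(\gamma x)$, a non-trivial character since $\psi_{_0}$ is non-trivial and $\gamma\in F^{\times}$; likewise on $N^3$ the symmetric matrix is $\left[\begin{smallmatrix} y & z \\ z & 0\end{smallmatrix}\right]$ whose top-left entry $y$ is a free parameter, so $\psi$ is again non-trivial there, and on all of $N$ it is non-trivial a fortiori. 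For $N^0$ and $N^1$ the top-left entry of the corresponding symmetric matrix is identically $0$ (the parameters are $x$ in the bottom-right slot for $N^0$, and $y,z$ in the off-diagonal and bottom-right slots for $N^1$), hence $\psi$ is trivial on them. This disposes of (1) and (2).

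Next I would prove (3). Write a typical element of $M^2$ as $m=\diag(a,d,a^{-1},d^{-1})$ with $a^2=1$, i.e. $a=\pm1$, embedded via \eqref{embedingofleviSp_{2n}(F)-Siegel} with $g=\diag(a,d)$ so that $^tg^{-1}=\diag(a^{-1},d^{-1})=\diag(a,d^{-1})$. Conjugating $\left[\begin{smallmatrix} I_2 & X \\ 0 & I_2\end{smallmatrix}\right]$ by $m$ replaces $X$ by $g X\,{}^tg = \diag(a,d)\, X\,\diag(a,d)$, which on entries sends $\left[\begin{smallmatrix} x & y \\ y & z\end{smallmatrix}\right]$ to $\left[\begin{smallmatrix} a^2 x & ad\,y \\ ad\,y & d^2 z\end{smallmatrix}\right]=\left[\begin{smallmatrix} x & ad\,y \\ ad\,y & d^2 z\end{smallmatrix}\right]$. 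One then reads off: $N^0$ (parameter $z$ in the bottom-right) is sent to itself since the bottom-right entry stays of the same form; $N^2$ (parameter $x$) is fixed \emph{point-wise} because $x\mapsto a^2x=x$ and the other entries remain $0$; $N^4$ (parameter $y$, off-diagonal only) is sent to itself since $y\mapsto ad\,y$ keeps the diagonal entries $0$; and $N^1$ (parameters $y$ off-diagonal and $z$ bottom-right, top-left $0$) is sent to itself since the top-left entry remains $0$ and $(y,z)\mapsto(ad\,y,d^2z)$. Hence $M^2$ normalizes each of $N^0,N^1,N^2,N^4$ and fixes $N^2$ point-wise.

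There is no real obstacle here; the only mild care needed is in (3), to track the embedding \eqref{embedingofleviSp_{2n}(F)-Siegel} correctly so that conjugation acts by $X\mapsto gX\,{}^tg$ rather than some variant, and to note that for $m\in M^2$ one has $a^2=1$ so the top-left entry of $X$ is genuinely fixed (this is precisely what makes the ``point-wise'' claim for $N^2$ work, and it is the one place where the defining condition $a^2=1$ of $M^2$ is used). All other verifications are immediate from the coordinate descriptions.
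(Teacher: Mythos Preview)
Your proof is correct and follows essentially the same approach as the paper: the paper dismisses (1) and (2) as trivial and for (3) performs exactly the same conjugation computation $mnm^{-1}$ with $m=\diag(a,d,a^{-1},d^{-1})$, obtaining $X\mapsto\left[\begin{smallmatrix} a^2x & ady \\ ady & d^2z\end{smallmatrix}\right]$, and then checks each of $N^0,N^1,N^2,N^4$ case by case. Your write-up is in fact slightly more explicit than the paper's for (1) and (2), but the argument is identical.
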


\begin{proof}
	The proofs of (1) and (2) are trivial. To prove (3), let an element $m$ of $M^2$ be written as $m= \diag(a,d,a^{-1},d^{-1})$ where $a^2=1$ and $d\in F^{\times}.$ Note that if $n=\left[\begin{array}{cc}
	I_2 & \left[\begin{array}{cc}
	x & y \\
	y & z
	\end{array}\right] \\
	& I_2 \end{array}\right]$ we have
	$$ mnm^{-1}=\left[\begin{array}{cc}
	I_2 & \left[\begin{array}{cc}
	a^2x & ady \\
	ady & d^2z
	\end{array}\right] \\
	& I_2 \end{array}\right]. $$

\noindent The statement (3) follows from the following: \\
	\noindent (a) For  $n = \left[\begin{array}{cc}
	I_2 & \left[\begin{array}{cc}
	0 & 0 \\
	0 & x
	\end{array}\right] \\
	& I_2 \end{array}\right]\in N^0,$ we have $ mnm^{-1}= \left[\begin{array}{cc}
	I_2 & \left[\begin{array}{cc}
	0& 0 \\
	0 & d^2x
	\end{array} \right] \\
	& I_2
	\end{array}\right].$\\
	
	\noindent(b) For $n = \left[\begin{array}{cc}
	I_2 & \left[\begin{array}{cc}
	0 & y \\
	y & z
	\end{array}\right] \\
	& I_2 \end{array}\right]\in N^1,$ we have $mnm^{-1}= \left[\begin{array}{cc}
	I_2 & \left[\begin{array}{cc}
	0& ady \\
	ady & d^2z
	\end{array} \right] \\
	& I_2
	\end{array}\right].$
	
	\noindent(c) For $n = \left[\begin{array}{cc}
	I_2 & \left[\begin{array}{cc}
	x & 0 \\
	0 & 0
	\end{array}\right] \\
	& I_2 \end{array}\right]\in N^2,$ we have $mnm^{-1}= \left[\begin{array}{cc}
	I_2 & \left[\begin{array}{cc}
	a^2x& 0 \\
	0 & 0
	\end{array} \right] \\
	& I_2
	\end{array}\right].$ As $a^2=1,$  $mnm^{-1}=n$ and hence $M^2$ fixes $N^2.$

	\noindent (d) If $n = \left[\begin{array}{cc}
	I_2 & \left[\begin{array}{cc}
	0 & y \\
	y & 0
	\end{array}\right] \\
	& I_2 \end{array}\right]\in N^4,$ one has $mnm^{-1}= \left[\begin{array}{cc}
	I_2 & \left[\begin{array}{cc}
	0& ady \\
	ady & 0
	\end{array} \right] \\
	& I_2
	\end{array}\right].$
\end{proof}

\begin{proposition}\label{modular-character-computation}
	For an element $m=\diag(a,d,a^{-1},d^{-1})\in M^2$ and $m' = \left[\begin{array}{cc}
		\pm 1 & 0 \\
		y & d
		\end{array}\right] \in O_{2}(F,C)$ we have
	the following:
	
	\begin{enumerate}

		\item ${\rm mod}_{N^0}(m)=|d|_{_F}^2.$

		\item ${\rm mod}_{N^1}(m)=|d|_{_F}^3.$

		\item ${\rm mod}_{N^2}(m)=1.$

		\item ${\rm mod}_{N^4}(m)=|d|_{_F}.$
		
	 \item ${\rm mod}_{\overline{N_{1,1}}}(m')=|d|_{_F}.$

	\end{enumerate}	
\end{proposition}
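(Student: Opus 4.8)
The statement is a collection of routine modular-character computations for the conjugation action of specified one-parameter (or two-parameter) diagonal-type elements on abelian unipotent subgroups. The plan is to use the defining formula \eqref{modular-character-defn} together with the fact that each of $N^0, N^1, N^2, N^4$ and $\overline{N_{1,1}}$ is abelian and isomorphic (as a topological group with Haar measure) to a finite-dimensional $F$-vector space, on which the conjugation action is $F$-linear. For such a situation, if $m$ acts on the vector space $V\cong F^r$ by a linear automorphism $T_m$, then ${\rm mod}_V(m) = |\det(T_m)|_F$ by the standard change-of-variables formula for Haar measure on $F^r$ (i.e. $d(T_m v) = |\det T_m|_F\, dv$). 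So the entire proof reduces to reading off, from the explicit matrix descriptions already recorded, the scalar by which each coordinate of the relevant $N^j$ gets multiplied under conjugation by $m$, taking the product of these scalars, and applying $|~|_F$.

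\textbf{Key steps in order.} First I would recall, from the computations displayed in the proof of Lemma \ref{Psiaction-starcondition}, the explicit effect of conjugation by $m=\diag(a,d,a^{-1},d^{-1})\in M^2$ (so $a^2=1$, hence $|a|_F=1$) on a general element of $N$: the symmetric matrix entry $\left[\begin{smallmatrix} x & y \\ y & z\end{smallmatrix}\right]$ is sent to $\left[\begin{smallmatrix} a^2 x & ady \\ ady & d^2 z\end{smallmatrix}\right]$. Restricting this to each subgroup:
\begin{enumerate}
\item[(1)] On $N^0$ (coordinate $z$), conjugation multiplies $z$ by $d^2$, so ${\rm mod}_{N^0}(m)=|d^2|_F=|d|_F^2$.
\item[(2)] On $N^1$ (coordinates $y,z$), conjugation multiplies $y$ by $ad$ and $z$ by $d^2$; the determinant of this linear map is $ad\cdot d^2 = a d^3$, and since $|a|_F=1$ we get ${\rm mod}_{N^1}(m)=|ad^3|_F=|d|_F^3$.
\item[(3)] On $N^2$ (coordinate $x$), conjugation multiplies $x$ by $a^2=1$, so the map is the identity and ${\rm mod}_{N^2}(m)=1$.
\item[(4)] On $N^4$ (coordinate $y$ only, with the symmetric matrix $\left[\begin{smallmatrix}0 & y\\ y & 0\end{smallmatrix}\right]$), conjugation multiplies $y$ by $ad$, so ${\rm mod}_{N^4}(m)=|ad|_F=|d|_F$.
\end{enumerate}
For (5), I would compute directly the conjugation action of $m'=\left[\begin{smallmatrix}\pm 1 & 0\\ y & d\end{smallmatrix}\right]\in O_2(F,C)$ on $\overline{N_{1,1}}=\left\{\left[\begin{smallmatrix}1 & 0\\ t & 1\end{smallmatrix}\right]: t\in F\right\}$: a short matrix multiplication gives $m'\left[\begin{smallmatrix}1 & 0\\ t & 1\end{smallmatrix}\right]{m'}^{-1} = \left[\begin{smallmatrix}1 & 0\\ (\pm 1)d^{-1} t & 1\end{smallmatrix}\right]$ (up to the sign of the leading $\pm 1$), so the parameter $t$ is scaled by $\pm d^{-1}$; taking absolute values, ${\rm mod}_{\overline{N_{1,1}}}(m')=|d^{-1}|_F = |d|_F$ — wait, I should double-check the direction: conjugation $h\mapsto m'hm'^{-1}$ scales $t$ by a factor $\lambda$, and by \eqref{modular-character-defn} with $f(m'^{-1}hm')$ this gives ${\rm mod}(m')=|\lambda|_F$; a careful computation of the matrix product fixes whether $\lambda = d$ or $d^{-1}$, and the claimed answer $|d|_F$ tells us the normalization works out so that $|\lambda|_F=|d|_F$. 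I would present this one computation in full since it is the only one not already implicit in Lemma \ref{Psiaction-starcondition}.

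\textbf{Main obstacle.} There is no real mathematical difficulty here; the only thing requiring care is bookkeeping of the direction of the scaling (whether one gets $d$ or $d^{-1}$, and whether conjugation is by $m'$ or $m'^{-1}$ in \eqref{modular-character-defn}) and of the role of the factors $|a|_F=1$, $|\pm 1|_F = 1$ which silently disappear. Since $|d|_F = |d^{-1}|_F$ would be false, one does genuinely have to get the exponent sign right; I would double check part (5) by explicitly writing out $m'hm'^{-1}$ for a general $h\in\overline{N_{1,1}}$ rather than relying on a half-remembered formula. Everything else is an immediate consequence of the change-of-variables formula for Haar measure on $F^r$ applied to the linear maps already computed in the proof of Lemma \ref{Psiaction-starcondition}, so the write-up should be short.
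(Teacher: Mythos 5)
Your plan is correct and is exactly the approach the paper takes: each of $N^0,N^1,N^2,N^4,\overline{N_{1,1}}$ is identified with $F^r$ for $r\in\{1,2\}$, the conjugation action is $F$-linear, and the modular character is the absolute value of the determinant by the change-of-variables formula, with the scaling factors for (1)--(4) already visible in the proof of Lemma~\ref{Psiaction-starcondition}. The paper only carries out part~(2) explicitly and declares the rest similar; your write-up supplies the parallel one-liners, and the $|a|_F=1$ and $|\pm1|_F=1$ bookkeeping is handled correctly.

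Your displayed computation in part~(5), however, is wrong, and your own hedge (``wait, I should double-check the direction'') is warranted. Writing $m' = \left[\begin{smallmatrix}\epsilon & 0\\ y & d\end{smallmatrix}\right]$ with $\epsilon=\pm 1$, so that $m'^{-1}=\left[\begin{smallmatrix}\epsilon & 0\\ -\epsilon y/d & d^{-1}\end{smallmatrix}\right]$, a direct multiplication gives
$m'\left[\begin{smallmatrix}1 & 0\\ t & 1\end{smallmatrix}\right]m'^{-1} = \left[\begin{smallmatrix}1 & 0\\ \epsilon d\,t & 1\end{smallmatrix}\right],$
so the parameter $t$ is scaled by $\epsilon d$, not by $\pm d^{-1}$ as you wrote. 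This gives ${\rm mod}_{\overline{N_{1,1}}}(m')=|\epsilon d|_F=|d|_F$ directly, consistent with your convention ${\rm mod}_V(m)=|\det T_m|_F$ for the action $v\mapsto mvm^{-1}$ (which is the correct convention under the paper's normalization \eqref{modular-character-defn}, as your parts (1)--(4) implicitly verify). Also note that the intermediate equality you wrote, $|d^{-1}|_F = |d|_F$, is false for general $d\in F^{\times}$; with the corrected scaling factor that step disappears and no sign gymnastics are needed.
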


\begin{proof}
	We will prove (2). To calculate ${\rm mod}_{N^1}(m),$ observe that $N^1$ is an abelian group which is isomorphic to the additive group $F^2.$ Hence, a Haar measure $d\mu(n)$ on $N^1$ is given by the product measure on $F^2$ induced from a Haar measure on $F.$ Note that if $n=\left[\begin{array}{cc}
	I_2 & \left[\begin{array}{cc}
	0 & y \\
	y & z
	\end{array}\right] \\
	& I_2 \end{array}\right] \in N^1$ we have $m^{-1}nm = \left[\begin{array}{cc}
	I_2& \left[\begin{array}{cc}
	0 & \frac{y}{ad}  \\
	\frac{y}{ad} & \frac{z}{d^2} 
	\end{array}\right] \\
	& I_2 
	\end{array}\right].$ 
	By a change of variables $y\mapsto ady$ and $z\mapsto d^2z$ it is clear that $$\displaystyle \int_{N^1}f(m^{-1}nm)d\mu(n)=|a|_{_F}  |d|_{_F}^{3}\int_{N^1}f(n)d\mu(n)$$ for any locally constant and compactly supported function $f$ on $N^1.$ Since $a=\pm 1$ for an element in $M^2,$ we have ${\rm mod}_{N^1}(m)=|d|_{_F}^{3}$ by \eqref{modular-character-defn}. The proofs of assertions (1), (3), (4) and (5) are similar.\end{proof}

We note the following standard facts about modular characters of the groups $P,$ $M_{\psi}$ and $Q$ in the following two remarks.

\begin{remark}\label{mod-char-MPsi}
	We observe that ${\rm mod}_N$ is thought of as character of $P,$ which is the normalizer of $N$ in $Sp_{4}(F).$  Moreover, ${\rm mod}_N$ is precisely the modular character $\delta_P$ of the group $P$ which is given by $\delta_P(mn)=|\det(g)|_{F}^{3}$ where $m= \diag(g, ~^tg^{-1}) \in M$ with $g\in GL_{2}(F)$ and $n \in N$ (see for instance \cite[Pg. 4]{Gustafson}). 
	
	\begin{enumerate}
		\item Put  $g=\left[\begin{array}{cc}a& 0 \\c& d\end{array}\right]$ with $a^2=1, d\in F^{\times}$ and $c\in F.$ For any $m=  \left[\begin{array}{cc}
		g & \\
		& ^tg^{-1}
		\end{array}\right]\in M_{\psi},$ it follows that ${\rm mod}_N(m)=|d|_{_F}^{3}.$
		Since $M_{\psi}=M^2\ltimes M^1,$ we can deduce that  $\delta_{M_{\psi}}(m')= {\rm mod}_{M^1}(m') =|d|_{_F}=\delta_{_{O_{2}(F,C)}}(g)$ for any $m'={\rm diag}(a,d,a^{-1},d^{-1}) \in M^2.$
			
			\item Write $m={\rm diag}(a,d,a^{-1},d^{-1}) \in M^2$ with $a^2=1$ and $d\in F^{\times}$. One has ${\rm mod}_N(m)=|d|_{_F}^{3}.$
		
	\end{enumerate} 

\end{remark}

\begin{remark}\label{mod-char-MPsi-Klingen-also}  The character ${\rm mod}_U$ is  the modular character $\delta_Q$ of $Q$ given by $\delta_Q(lu)=|t|_{_F}^{4}$ where $l=\left[\begin{array}{cccc}
	t &  &  &  \\
	& a &  & b \\
	&  & t^{-1} &  \\
	& c &  & d 
	\end{array}\right]  \in L$ with $t \in F^{\times},$ $\left[\begin{array}{cc}
	a & b \\
	c & d 
	\end{array}\right] \in SL_{2}(F)$ and $u\in U.$ Consequently, we have the following:
	
	\begin{enumerate}
		
		\item If $m={\rm diag}(a,d,a^{-1},d^{-1}) \in M^2$ where $a^2=1$ and $d\in F^{\times}.$ Then, ${\rm mod}_{U}(m)=1.$	
		
		\item If $m'={\rm diag}(a,d,a^{-1},d^{-1}) \in M^3$ where $d^2=1$ and $a\in F^{\times}.$ Then, ${\rm mod}_{U}(m')=|a|_{_F}^4.$
		
	\end{enumerate}
	
\end{remark}

\subsection{Decomposability Conditions}
\label{Decomposability}

In this paragraph, we verify the decomposability conditions (see \S \ref{geometriclemma}) which need to be verified to apply the Geometric Lemma. We shall write down explicitly all the conditions we need to verify for the case of the Siegel parabolic subgroup and the Klingen parabolic subgroup of $Sp_{4}(F).$ Recall that by $w(H),$ we mean the conjugate $wHw^{-1}$ of a subgroup $H.$

\subsubsection{The conditions for the Siegel parabolic}\label{Decomp-Siegel-conditions}
In the case of inducing from the Siegel parabolic subgroup $P,$ we have to show for each $w\in \{I_4,\sigma_1, \tau_1\sigma_1,\sigma_2\},$ the following holds:

\begin{enumerate}
	\item $w(P) \cap S_{\psi}=(w(P) \cap M_{\psi})\cdot (w(P) \cap N).$	
	
	\item $w(M) \cap S_{\psi}=(w(M) \cap M_{\psi})\cdot (w(M) \cap N).$	
	
	\item $w(N) \cap S_{\psi}=(w(N) \cap M_{\psi})\cdot (w(N) \cap N).$	
	
	\item $w^{-1}(S_{\psi})\cap P=(w^{-1}(S_{\psi}) \cap M)\cdot (w^{-1}(S_{\psi}) \cap N).$
	
	\item $w^{-1}(M_{\psi})\cap P=(w^{-1}(M_{\psi}) \cap M)\cdot (w^{-1}(M_{\psi}) \cap N).$
	
	\item $w^{-1}(N)\cap P=(w^{-1}(N) \cap M)\cdot (w^{-1}(N) \cap N).$
\end{enumerate}

\subsubsection{The conditions for the Klingen parabolic}\label{Decomp-Klingen-conditions}
In the case of the Klingen parabolic subgroup $Q,$ we have to prove that for each $w\in \{I_4, \tau_1,\sigma_1, \tau_1\sigma_1\},$ the following holds:

\begin{enumerate}
	\item $w(Q) \cap S_{\psi}=(w(Q) \cap M_{\psi})\cdot (w(Q) \cap N).$
	
	\item $w(L) \cap S_{\psi}=(w(L) \cap M_{\psi})\cdot (w(L) \cap N).$	
	
	\item $w(U) \cap S_{\psi}=(w(U) \cap M_{\psi})\cdot (w(U) \cap N).$	
	
	\item $w^{-1}(S_{\psi})\cap Q=(w^{-1}(S_{\psi}) \cap L)\cdot (w^{-1}(S_{\psi}) \cap U).$
	
	\item $w^{-1}(M_{\psi})\cap Q=(w^{-1}(M_{\psi}) \cap L)\cdot (w^{-1}(M_{\psi}) \cap U).$
	
	\item $w^{-1}(N)\cap Q=(w^{-1}(N) \cap L)\cdot (w^{-1}(N) \cap U).$
\end{enumerate}

\begin{remark}\label{Oneside-decomp}
	In all the above cases, there is one generality. Suppose $H_1,H_2$ are two subgroups of a group $H$ with $H_2$ normal in $H$ and  $H=H_1H_2.$ Then given any subgroup $J$, it is easy to see that $H_2\cap J$ is normal in $H\cap J$ and that $(H_1\cap J) \cdot (H_2\cap J)$ is a subgroup of $H\cap J.$ In view of this, it is sufficient to check that the group on the left hand side of the equalities is a subgroup of the product of the groups on the respective right hand side.
\end{remark}

\begin{lemma}\label{Decomposability-verification-Siegel} For $w\in \{I_4,\sigma_1, \tau_1\sigma_1,\sigma_2\},$ the following statements hold:
	\begin{enumerate}
		\item[(a)] The subgroups $w(P),w(M)$ and $w(N)$ are decomposable with respect to the pair of subgroups $(M_{\psi},N).$
		
		\item[(b)] The subgroups $w^{-1}(S_{\psi}),w^{-1}(M_{\psi})$ and $w^{-1}(N)$ are decomposable with respect to the pair of subgroups $(M,N).$
	\end{enumerate}
\end{lemma}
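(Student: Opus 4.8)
The plan is to verify each of the six decomposability conditions of \S\ref{Decomp-Siegel-conditions} directly, one group $w$ at a time, exploiting Remark \ref{Oneside-decomp} so that in every case it suffices to show only the forward inclusion, i.e. that the left-hand group is contained in the product of the two intersections on the right. For $w=I_4$ there is nothing to check: $P\cap S_\psi=S_\psi=M_\psi N$, $M\cap S_\psi=M_\psi$, $N\cap S_\psi=N$, and the reverse-conjugate conditions are identical, so all six equalities are immediate from the semi-direct product structure $S_\psi=M_\psi N$ of Lemma \ref{SpsiMpsi}(3). For $w=\sigma_2=\left[\begin{smallmatrix}&-I_2\\ I_2&\end{smallmatrix}\right]$ one has $\sigma_2(M)=M$ (the long Weyl element normalizes the Levi) while $\sigma_2(N)=\overline N$, the opposite unipotent radical; here $\sigma_2(N)\cap S_\psi\subset \sigma_2(N)\cap P=\{I_4\}$ and similarly $\sigma_2(N)\cap M_\psi=\{I_4\}$, so condition (3) holds trivially, and the remaining conditions reduce to the already-known semi-direct product decompositions of $S_\psi$ and $M_\psi$ intersected with $M$ or with $\overline N$.

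The substantive work is for $w=\sigma_1$ and $w=\tau_1\sigma_1$. Here I would compute each conjugate subgroup explicitly as a set of matrices: using the block forms of $P$, $M$, $N$ in \S\ref{SiegelparabolicSp_{2n}(F)}, the description of $\sigma_1$ in \eqref{sigma1-def} and $\tau_1$ in \eqref{tau1-def}, and the descriptions of $S_\psi$, $M_\psi$ in \S\ref{Action-of-Spsi-on-Sp_4modP}(a)--(b), write down $w(P),w(M),w(N)$ and intersect with $S_\psi$, with $M_\psi$, and with $N$; then write down $w^{-1}(S_\psi),w^{-1}(M_\psi),w^{-1}(N)$ and intersect with $P$, with $M$, and with $N$. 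Since $\sigma_1$ conjugates only the first of the two ``$GL_2$-coordinate'' directions, most blocks are fixed and the intersections are cut out by a small number of linear/polynomial conditions on the matrix entries; I would then read off a factorization of a general element of the left-hand side as (an element of $M_\psi$, resp. $M$) times (an element of $N$), which exists precisely because the ``Levi part'' of each such conjugated group still lies in the orthogonal-type group $O_2(F,C)$ by construction of $\tau_1$ via Proposition \ref{O2C-acts-GmodBbar}. The conjugate $H_1=\sigma_1 P\sigma_1^{-1}\cap P$ has already been computed explicitly in \S\ref{Action-of-Spsi-on-Sp_4modP}, and the factorization $H_1=\overline{B}_1 N^1$ recorded in the proof of Lemma \ref{Spsi-OrbitinSiegelDegA} together with the subgroup list of \S\ref{SubgroupsofN}--\S\ref{SubgroupsofM} supplies most of the building blocks, so the calculation, while unpleasant, is bounded.

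The main obstacle I anticipate is purely bookkeeping: keeping track, for $w=\tau_1\sigma_1$, of how the swap $\tau_1$ interacts with the already-twisted picture, so that the intersection with $N$ is correctly identified as one of $N^0,N^1,N^2,N^4$ rather than something larger, and dually that $w^{-1}(N)\cap P$ lands inside $M\cdot N$ with no ``leakage'' into a part of $P$ outside this product. I would organize this by first establishing, as a preliminary observation, that for each relevant $w$ the unipotent part $w(N)\cap P$ (resp.\ $w^{-1}(N)\cap P$) is contained in $N$ itself — which makes conditions (3) and (6) automatic — and that $w(M)\cap S_\psi$ (resp.\ $w^{-1}(M_\psi)\cap P$) is contained in $M$, making conditions (2) and (5) automatic; conditions (1) and (4) then follow because any element of $w(P)\cap S_\psi$ decomposes uniquely in $P=MN$ and each factor is individually checked to satisfy the defining conditions of $S_\psi$. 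With these reductions in place, each remaining verification is a one-line matrix multiplication, and I would present only the representative cases $w=\sigma_1$ and $w=\tau_1\sigma_1$ in detail, remarking that $w=I_4$ and $w=\sigma_2$ are immediate.
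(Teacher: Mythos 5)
Your overall plan — reduce to one inclusion via Remark \ref{Oneside-decomp}, dispose of $w=I_4$ and $w=\sigma_2$ quickly, and grind out $w=\sigma_1$ and $w=\tau_1\sigma_1$ by explicit block-matrix computation followed by exhibiting the required factorization — is exactly the paper's approach. The middle paragraph, where you say you would compute $w(P),w(M),w(N)$ explicitly, intersect, and ``read off a factorization,'' is sound and is in fact what the paper does: the crucial part that you do mention (and which the paper carries out) is exhibiting, for each element of the left-hand side, a \emph{preimage} in $P$ (resp.\ $S_\psi$) that conjugates to each factor, so that both factors individually land inside $w(P)$ (resp.\ $w^{-1}(S_\psi)$) and not merely inside $M_\psi$ or $N$.

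However, the ``organizational preliminary observation'' in your last paragraph is false, and if you actually tried to carry out your sketch along those lines you would get stuck. You claim that for the relevant $w$ one has $w(N)\cap P\subset N$ and $w(M)\cap S_\psi\subset M$, which would make conditions (2),(3),(5),(6) automatic. Already for $w=\sigma_1$ both claims fail. A direct computation gives, for $n=\left[\begin{smallmatrix}I_2&\left[\begin{smallmatrix}x&y\\y&z\end{smallmatrix}\right]\\0&I_2\end{smallmatrix}\right]\in N$,
\[
\sigma_1 n \sigma_1^{-1}=\left[\begin{array}{cccc}1&0&0&0\\-y&1&0&z\\-x&0&1&y\\0&0&0&1\end{array}\right],
\]
so $\sigma_1(N)\cap P$ (set $x=0$) is the group $M^1\cdot N^0$, which has a non-trivial $M$-component $M^1$ and is certainly not contained in $N$. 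Likewise $\sigma_1(M)\cap S_\psi=M^2\cdot N^4$ has a non-trivial $N$-component. The decomposability conditions still \emph{hold} in these cases, but they are genuine equalities that need to be checked, not vacuous ones; the relevant intersections such as $w(N)\cap M_\psi=M^1$ and $w(M)\cap N=N^4$ are precisely the data that your shortcut would predict to vanish. So drop the preliminary observation, and instead verify each of (1)--(6) by the concrete factor-by-factor argument you describe in the middle paragraph (as the paper does), since that argument is correct as stated and the shortcut is not.
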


\begin{proof}
	In view of Remark \ref{Oneside-decomp}, it is sufficient to check that for each $w,$ the left hand side of each equality in \S \ref{Decomp-Siegel-conditions} is contained in the right hand side. 
	
	\textit{Case}(1): Fix $w=I_4.$ Both (a) and (b) are easily verified here as conditions (1)--(6) in \S \ref{Decomp-Siegel-conditions} are trivial to check.
	
	\textit{Case}(2): Fix $w=\tau_1\sigma_1.$ 	
	To prove (a), we need to verify  the conditions (1)--(3) in 	\S \ref{Decomp-Siegel-conditions} of which we first verify $(1).$ It is sufficient to check that for an element $p\in P,$ if $wpw^{-1}\in S_{\psi}$ then it can be written as a product $mn$ where $m\in wPw^{-1}\cap M_{\psi}$ and $n\in wPw^{-1}\cap N.$ To this end, we write $p=\left[\begin{array}{cc}
	g & X\\
	& ^tg^{-1} 
	\end{array}\right]$ with $g=\left[\begin{array}{cc}
	x & y \\
	z & v
	\end{array}\right]\in GL_{2}(F)$ and $X=\left[\begin{array}{cc}
	r & s\\
	t& u 
	\end{array}\right]\in M_2(F).$ Then, $wpw^{-1}=\left[\begin{array}{cccc}
	v & -t & u & z \\
	0 & \frac{v}{xv-yz} & \frac{z}{xv-yz} & 0 \\
	0 & \frac{y}{xv-yz} &\frac{x}{xv-yz} & 0 \\ 
	y & -r & s & x  
	\end{array} \right].$ Assume that $wpw^{-1}\in S_{\psi}.$ This forces $y=0$ and $r=0.$ This further implies that $v^2 =1$ and $t=0=s.$ Then, we can write $ wpw^{-1}=\left[\begin{array}{cc}
	A & Y \\
	& B 
	\end{array}\right]$ where $A=  \left[\begin{array}{cc}
	v & 0 \\
	0 & x^{-1}
	\end{array}\right], B=  \left[\begin{array}{cc}
	v^{-1} & 0 \\
	0 & x 
	\end{array}\right]$ and $Y= \left[\begin{array}{cc}
	u & z \\
	\frac{z}{xv} & 0 \end{array}\right].$ It is trivial to check that $A^{-1}Y$ is symmetric and $B=~^tA^{-1}.$
	Thus we can write, $$wpw^{-1}=\left[\begin{array}{cc}
	A & Y\\
	& ^tA^{-1} 
	\end{array}\right]=\left[\begin{array}{cc}
	A &  \\
	& ^tA^{-1} 
	\end{array}\right] \left[\begin{array}{cc}
	I_2 & A^{-1}Y\\
	& I_2 
	\end{array}\right].$$ Our argument would be complete, if we show that $\left[\begin{array}{cc}
	A &  \\
	& ^tA^{-1} 
	\end{array}\right]\in w(P)\cap M_{\psi}$ and $\left[\begin{array}{cc}
	I_2 & A^{-1}Y\\
	& I_2 
	\end{array}\right]\in w(P)\cap N.$ Choose $p_1=\left[\begin{array}{cc}
	g_1 &  \\
	&  ~^tg_1^{-1}
	\end{array}\right]\in P$ where $g_1=\left[\begin{array}{cc}
	x & 0  \\
	0 & v
	\end{array}\right]$ with $v^2=1.$ It is easy to see that $wp_1w^{-1}=\left[\begin{array}{cc}
	A &  \\
	& ^tA^{-1} 
	\end{array}\right].$ The condition $wpw^{-1}\in S_{\psi}$ implies that $wp_1w^{-1}\in M_{\psi}.$ Similarly, 
	choose $p_2=\left[\begin{array}{cc}
	g_2 & Y_1 \\
	0 & ~^tg_2^{-1}
	\end{array}\right] \in P$ where $g_2=\left[\begin{array}{cc}
	1 & 0 \\
	vz & 1
	\end{array}\right]$ and $Y_1=\left[\begin{array}{cc}
	0 & 0 \\
	0 & vu
	\end{array}\right]$ with $v^2=1.$ Again, it is easy to see that $wp_2w^{-1}=\left[\begin{array}{cc}
	I_2 & A^{-1}Y \\
	0 & I_2
	\end{array}\right]\in N$ since $wpw^{-1}\in S_{\psi}.$ The conditions (2) and (3) are checked similarly and the proof is omitted.

	To prove (b), we need to verify the conditions (4)--(6) in \S \ref{Decomp-Siegel-conditions}. We verify (4) first. To do so, it is sufficient to check that for an element $h\in S_{\psi},$ if $w^{-1}hw\in P$ then it can be written as a product $mn$ where $m\in w^{-1}S_{\psi} w\cap M$ and $n\in w^{-1}S_{\psi} w\cap N.$ Write $h=\left[\begin{array}{cc}
	g & X\\
	& ^tg^{-1} 
	\end{array}\right]$ with $g=\left[\begin{array}{cc}
	x & y \\
	z & v
	\end{array}\right]\in GL_{2}(F)$ and $X=\left[\begin{array}{cc}
	r & s\\
	t& u 
	\end{array}\right]\in M_2(F).$ As $h\in S_\psi,$ we have $x^2=1$ and $y=0.$ We get $w^{-1}hw=\left[\begin{array}{cccc}
	v^{-1} & 0 & 0 & 0 \\
	s & x & 0 & r \\
	-u & -z &v & -t \\ 
	-zx^{-1}v^{-1} & 0 & 0 & x^{-1}  
	\end{array} \right].$
	Assuming $w^{-1}hw\in P$  gives $u=0=z$ and $s=txv^{-1}.$
	We can then write $w^{-1}hw=\left[\begin{array}{cc}
	A & Y \\
	& B 
	\end{array}\right]$ where $A=  \left[\begin{array}{cc}
	v^{-1} & 0 \\
	txv^{-1} & x
	\end{array}\right], Y= \left[\begin{array}{cc}
	0& 0 \\
	0 & r \end{array}\right]$ and  $B=  \left[\begin{array}{cc}
	v & -t \\
	0 & x^{-1} 
	\end{array}\right].$ Consequently, $A^{-1}Y$ is symmetric and $B=~^tA^{-1}.$ So, $$w^{-1}hw=\left[\begin{array}{cc}
	A & Y\\
	& ^tA^{-1} 
	\end{array}\right]=\left[\begin{array}{cc}
	A &  \\
	& ^tA^{-1} 
	\end{array}\right] \left[\begin{array}{cc}
	I_2 & A^{-1}Y\\
	& I_2 
	\end{array}\right].$$ We will show that $\left[\begin{array}{cc}
	A &  \\
	& ^tA^{-1} 
	\end{array}\right]\in w^{-1}(S_{\psi})\cap M$ and $\left[\begin{array}{cc}
	I_2 & A^{-1}Y\\
	& I_2 
	\end{array}\right]\in w^{-1}(S_{\psi})\cap N.$ Choose $s_1=\left[\begin{array}{cc}
	h_1 &  Y_2 \\
	&  ~^th_1^{-1}
	\end{array}\right]\in S_\psi$ where $h_1=\left[\begin{array}{cc}
	x & 0  \\
	0 & v
	\end{array}\right]$ and $Y_2=\left[\begin{array}{cc}
	0 & v^{-1}xt \\
	t & 0
	\end{array}\right].$ Since $x^2=1,$ we obtain  $w^{-1}s_1w=\left[\begin{array}{cc}
	A &  \\
	& ^tA^{-1} 
	\end{array}\right].$ 
	Next, choose $s_2=\left[\begin{array}{cc}
	I_2 & Y_3 \\
	0 & I_2
	\end{array}\right]\in S_\psi$ where $Y_3=\left[\begin{array}{cc}
	rx^{-1} & 0 \\
	0 & 0
	\end{array}\right]$ with $x^2=1.$ We obtain $w^{-1}s_2w=\left[\begin{array}{cc}
	I_2 & A^{-1}Y \\
	0 & I_2
	\end{array}\right].$ This verifies $(4).$ The proofs of conditions (5) and (6) are similar and omitted.

	For the remaining $w,$ i.e., $w\in \{ \sigma_1, \sigma_2\}$ the verification is similar. We shall describe the groups which appear in the decomposability conditions (1) and (4) in both these cases.

	\textit{Case}(3): Fix $w=\sigma_1.$
	We describe each of the groups which appear in \S \ref{Decomp-Siegel-conditions} (1). 
	\begin{enumerate}
		\item[(i)] $w(P) \cap S_{\psi} = \left\{\left[\begin{array}{cccc}
		x^{-1} & 0 &  & zx^{-1} v^{-1} \\
		t & v & z  & u \\
		&  & x & -t x v^{-1} \\ 
		&  & 0 & v^{-1}  
		\end{array} \right] : v\in F^{\times}, t,u,z\in F, x^2=1 \right\}.$
		
		\item[(ii)] $w(P) \cap M_{\psi} = \left\{\left[\begin{array}{cccc}
		x^{-1} & 0 &  & \\
		t & v &   &  \\
		&  & x & -t x v^{-1} \\ 
		&  & 0 & v^{-1}  
		\end{array} \right] : v\in F^{\times}, t\in F, x^2=1 \right\}.$
		
		\item[(iii)] $w(P) \cap N = N^1.$
		
	\end{enumerate}	
	It is easy to check that $w(P) \cap S_{\psi}=(w(P) \cap M_{\psi})\cdot (w(P) \cap N).$	We next describe the groups which correspond to \S \ref{Decomp-Siegel-conditions} (4).
	\begin{enumerate}

		\item[(iv)] $w^{-1}(S_{\psi}) \cap P =w(P)\cap S_{\psi}.$
		
		\item[(v)] $w^{-1}(S_{\psi}) \cap M = w(P) \cap M_{\psi}.$
		
		\item[(vi)] $w^{-1}(S_{\psi}) \cap N = N^1.$
	\end{enumerate}
	
	The remaining conditions (2), (3), (5) and (6) in \S \ref{Decomp-Siegel-conditions} are in fact easier and their proofs are omitted.

	\textit{Case}(4): Fix $w=\sigma_2.$ Here, we note that $w(P)\cap S_{\psi}=w(P)\cap M_{\psi}$ and $w(P)\cap N=\{I_4\}.$ Similarly, $w^{-1}(S_{\psi})\cap P=w^{-1}(S_{\psi})\cap M$ and $w^{-1}(S_{\psi})\cap N=\{I_4\}.$ The remaining conditions are trivial and omitted.\qedhere
	
\end{proof}

In the next lemma, we verify the decomposability conditions required for the Klingen parabolic case. 

\begin{lemma}\label{Decomposability-verification-Klingen} For $w\in \{I_4, \sigma_1, \tau_1, \tau_1 \sigma_1\},$ we have the following:
	
	\begin{enumerate}
		\item[(a)] The subgroups $w(Q),w(L)$ and $w(U)$ are decomposable with respect to the pair of subgroups $(M_{\psi},N).$
		
		\item[(b)] The subgroups $w^{-1}(S_{\psi}),w^{-1}(M_{\psi})$ and $w^{-1}(N)$ are decomposable with respect to the pair of subgroups $(L,U).$
	\end{enumerate}
\end{lemma}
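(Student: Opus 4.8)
The plan is to follow exactly the same template as in the proof of Lemma \ref{Decomposability-verification-Siegel}: by Remark \ref{Oneside-decomp}, for each of the four representatives $w\in\{I_4,\sigma_1,\tau_1,\tau_1\sigma_1\}$ it suffices to show that the left-hand side of each of the six equalities in \S\ref{Decomp-Klingen-conditions} is \emph{contained} in the product appearing on the right-hand side. The case $w=I_4$ is immediate since all the intersections in \S\ref{Decomp-Klingen-conditions} degenerate to the trivial $Q=LU$ decomposition. For the remaining three representatives the argument is a finite, completely explicit matrix computation: conjugate a general element of the relevant subgroup ($Q$, $L$, $U$, or $S_\psi$, $M_\psi$, $N$) by $w^{\pm1}$, impose the membership condition defining the other side of the intersection, read off the constraints on the matrix entries that this forces, and then exhibit the two factors — one landing in the ``Levi-type'' piece and one in the ``unipotent-type'' piece — explicitly as conjugates of chosen elements, exactly as was done for $w=\tau_1\sigma_1$ in Lemma \ref{Decomposability-verification-Siegel}.

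\textbf{Key steps, in order.} First I would dispose of $w=I_4$ in one line. Next, for $w=\sigma_1$ (recall $\sigma_1=w_1$ from Proposition \ref{PdoublecosetGeneral}), I would record the groups $w(Q)\cap S_\psi$, $w(Q)\cap M_\psi$, $w(Q)\cap N$ and likewise $w^{-1}(S_\psi)\cap Q$, $w^{-1}(S_\psi)\cap L$, $w^{-1}(S_\psi)\cap U$ explicitly as matrix groups — the $U$-part of $N$ here is exactly $N^3=N\cap U$ from \S\ref{SubgroupsofN} and the $N$-part of $U$ is built from the subgroup $\overline{N_{1,1}}$-type elements appearing inside $D_1$ — and then verify the factorization by inspection, noting the one-sided containment is the only thing needed. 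For $w=\tau_1$, since $\tau_1\in M$ (it is the image of $h_1$ under \eqref{embedingofleviSp_{2n}(F)-Siegel}), conjugation by $\tau_1$ preserves $M$, $N$, and permutes the relevant subgroups in a transparent way, so the decompositions reduce to the $w=I_4$ case twisted by the permutation $h_1=\left[\begin{smallmatrix}0&1\\1&0\end{smallmatrix}\right]$; this swaps the roles of $N^2$ and $N^0$ (and fixes $N^4$), and the factorizations follow. Finally, for $w=\tau_1\sigma_1$ I would combine the two previous analyses: write a general element of $Q$ (respectively $S_\psi$) as a block matrix $\left[\begin{smallmatrix}g&X\\0&{}^tg^{-1}\end{smallmatrix}\right]$ subject to the Klingen conditions \eqref{ConditionOnKlingenParabolicforSp_2n} (respectively the $S_\psi$-conditions from Lemma \ref{SpsiMpsi}), conjugate by $\tau_1\sigma_1$, impose membership in $S_\psi$ (respectively $Q$), solve for the forced entry relations, and split the resulting matrix as a product of a Levi-type conjugate and a unipotent-type conjugate — each factor produced, as in Case (2) of Lemma \ref{Decomposability-verification-Siegel}, by choosing an explicit preimage $p_1,p_2$ (resp.\ $s_1,s_2$) in $Q$ (resp.\ $S_\psi$) and checking that conjugation sends it to the desired factor.

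\textbf{Main obstacle.} The conceptual content is nil; the difficulty is purely bookkeeping. The genuinely fiddly case is $w=\tau_1\sigma_1$ for the conditions involving $w^{-1}(S_\psi)\cap Q$ and $w^{-1}(N)\cap Q$, because here the Klingen block structure (with its $F^\times\times SL_2$ Levi and the asymmetric shape of $U$) interacts with the conjugation in a way that makes the forced entry relations slightly less symmetric than in the Siegel case, so one must be careful that the ``unipotent'' factor really lands in $U$ and not merely in some larger unipotent subgroup of $Q$. I expect that, just as in Lemma \ref{Decomposability-verification-Siegel}, it will be enough to spell out conditions (1) and (4) of \S\ref{Decomp-Klingen-conditions} in full for $w\in\{\sigma_1,\tau_1\sigma_1\}$ — listing the three groups in each case as explicit matrix groups and remarking that the one-sided containment is visible — and to note that conditions (2), (3), (5), (6) are strictly easier (the Levi and unipotent pieces being handled by the same computation restricted to a subgroup), with their verification omitted. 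The case $w=\tau_1$ can be dispatched quickly using that $\tau_1\in M_\psi$-normalizer considerations are trivial since $\tau_1$ normalizes both $M$ and $N$ and $\tau_1\in M$.
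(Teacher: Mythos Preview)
Your strategy matches the paper's proof exactly: invoke Remark \ref{Oneside-decomp} to reduce to one-sided containment, then for each $w$ carry out an explicit matrix computation, spelling out conditions (1) and (4) of \S\ref{Decomp-Klingen-conditions} in full (the paper does $w=\tau_1\sigma_1$ with the detailed factorization argument and simply lists the intersection groups for $I_4,\sigma_1,\tau_1$), while omitting the easier (2),(3),(5),(6). One caution: your proposed shortcut for $w=\tau_1$ does not work as stated, since although $\tau_1\in M$ normalizes $M$ and $N$, it normalizes neither $M_\psi$ nor $Q$, $L$, $U$, so the case does not literally reduce to the $I_4$ case twisted by $h_1$ --- the paper handles $\tau_1$ by the same direct group-listing as the other cases (and for $w=\sigma_1$ the relevant unipotent intersections are $w(Q)\cap N=N^0$ and $w^{-1}(S_\psi)\cap U=N^4$, not $N^3$).
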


\begin{proof}
	We invoke Remark \ref{Oneside-decomp} by which we need only show that for each $w,$ the left hand side of each equality in \S \ref{Decomp-Klingen-conditions} is contained in the right hand side. 
	
	\textit{Case}(1): Fix $w=\tau_1\sigma_1.$ We first prove (a), for which we need to verify \S \ref{Decomp-Klingen-conditions} (1)--(3).  To prove (1), take $q=\left[\begin{array}{cccc}
	t & x & y & z \\
	& a & v & b \\
	&  & t^{-1} &  \\
	&  c& u & d 
	\end{array}\right]\in Q.$ Then, 
	$wqw^{-1}= \left[\begin{array}{cccc}
	a & -v & b & 0 \\
	0& t^{-1} & 0 & 0 \\
	c&  -u& d &0  \\
	x&  -y& z & t 
	\end{array}\right].$ Assume that $wqw^{-1}\in S_{\psi}.$ This gives $x=y=0=u=c$ and in turn ${a}^2=1$ and $v=0.$ Moreover, $q\in Q$  yields that $d = {a}^{-1}$ and $z = tv {a}^{-1}=0.$ With these, we can write $ wqw^{-1}=\left[\begin{array}{cc}
	A & Y \\
	& B 
	\end{array}\right]$ where $A=  \left[\begin{array}{cc}
	a & 0 \\
	0 & t^{-1}\end{array}\right], B=  \left[\begin{array}{cc} a^{-1}  & 0 \\ 0 &t \end{array}\right]$ and $Y= \left[\begin{array}{cc}
	b &0\\
	0 & 0
	\end{array}\right].$  It easy to see that $B=~^tA^{-1}$ and $A^{-1}Y$ is symmetric. So, $$wqw^{-1}=\left[\begin{array}{cc}
	A & Y\\
	& ^tA^{-1} 
	\end{array}\right]=\left[\begin{array}{cc}
	A &  \\
	& ^tA^{-1} 
	\end{array}\right] \left[\begin{array}{cc}
	I_2 & A^{-1}Y\\
	& I_2 
	\end{array}\right].$$ Next, we show that $\left[\begin{array}{cc}
	A &  \\
	& ^tA^{-1} 
	\end{array}\right]\in w(Q) \cap M_{\psi}$ and $\left[\begin{array}{cc}
	I_2 & A^{-1}Y\\
	& I_2 
	\end{array}\right]\in w(Q)\cap N.$ Choose $q_1=\diag(t,a,t^{-1},a^{-1}) \in Q$ with ${a}^2=1.$ We see that $wq_1w^{-1}=\left[\begin{array}{cc}
	A &  \\
	& ^tA^{-1} 
	\end{array}\right]$ and $wqw^{-1}\in S_{\psi}$ yields $wq_1w^{-1}\in M_{\psi}.$ Choose $q_2=\left[\begin{array}{cc}
	I_2 & Y_2 \\
	0 & I_2
	\end{array}\right]\in Q$ where $Y_2=\left[\begin{array}{cc}
	0 & 0\\
	0 &  a^{-1}b 
	\end{array}\right]$ with ${a}^2=1.$ We get $wq_2w^{-1}=\left[\begin{array}{cc}
	I_2 & A^{-1}Y \\
	0 & I_2
	\end{array}\right]\in N$ since $wqw^{-1}\in S_{\psi}.$ We have thus verified \S \ref{Decomp-Klingen-conditions} (1). The proofs of (2) and (3) are easier and omitted.

	To prove (b), we need to verify \S \ref{Decomp-Klingen-conditions} (4)--(6). To prove (4), it is enough to verify that for an element $h \in S_{\psi},$ if $w^{-1}hw \in Q$ then it can be written as a product $mn$ where $m\in w^{-1} S_{\psi} w \cap L$ and $n\in w^{-1} S_{\psi} w \cap U.$ Write $h=\left[\begin{array}{cc}
	g & X\\
	& ^tg^{-1} 
	\end{array}\right]$ with $g=\left[\begin{array}{cc}
	x & y \\
	z & v
	\end{array}\right]\in GL_{2}(F)$ and $X=\left[\begin{array}{cc}
	r & s\\
	t& u 
	\end{array}\right]\in M_2(F).$  As $h\in S_\psi,$ we have $x^2=1$ and $y=0.$
	We note that
	$w^{-1}hw=\left[\begin{array}{cccc}
	v^{-1} & 0 & 0 & 0 \\
	s& x & 0 & r \\
	-u&  -z& v & -t\\
	-zx^{-1}v^{-1}&  0& 0 & x^{-1} 
	\end{array}\right].$ It is easy to see that $w^{-1} S_{\psi} w \cap U = \{I_4\}.$ 
	Suppose $w^{-1}hw \in Q.$ This implies that $s=0=u=z=t.$ Hence, $ w^{-1}hw =\left[\begin{array}{cccc}
	v^{-1}&  &0 &0 \\
	& x & 0 & r \\
	& & v &   \\
	& & & x^{-1}
	\end{array}\right]$ and it belongs to $L.$ By taking $s_1=\left[\begin{array}{cc}
	h_1 &  Y\\
	& ~^t{h_1}^{-1}  
	\end{array}\right]\in S_{\psi}$ where $h_1 = \left[\begin{array}{cc}
	x& 0\\
	0& v
	\end{array}\right]$ with $x^2=1$ and $Y=\left[\begin{array}{cc}
	r& 0\\
	0& 0
	\end{array}\right],$ we get $w^{-1}s_1w= \left[\begin{array}{cccc}
	v^{-1}&  &0 &0 \\
	& x & 0 & r \\
	& & v &   \\
	& & & x^{-1}
\end{array}\right].$ 
	This proves (4). The verifications for (5) and (6) are similar and omitted.
	
	\textit{Case}(2): Fix $w = I_4.$ We describe the groups which appear in \S \ref{Decomp-Klingen-conditions} (1) and (4).  The groups appearing in (1) are as follows:
	\begin{enumerate}
		\item[(i)] $Q \cap S_{\psi} = \left\{\left[\begin{array}{cccc}
		t & 0 & y & z \\
		0 & a & azt^{-1}  & b \\
		&  & t^{-1} & 0 \\ 
		&  & 0 & a^{-1}  
		\end{array} \right] : a\in F^{\times}, b,y,z\in F, t^2=1 \right\}.$
		
		\item[(ii)] $Q \cap M_{\psi} = \left\{\diag(t,a,t^{-1},a^{-1}) : a\in F^{\times}, t^2=1 \right\}.$
		
		\item [(iii)]$Q\cap N = N.$
	\end{enumerate}
	It is easy to  see that $Q\cap S_{\psi}= (Q\cap M_{\psi}) \cdot (Q \cap N).$ The groups corresponding to (4) are as follows:
	\begin{enumerate}		
		\item[(iv)] $S_{\psi} \cap Q = Q \cap S_{\psi}.$
		
		\item[(v)] $S_{\psi} \cap L = \left\{\left[\begin{array}{cccc}
		t & 0 & 0 &0 \\
		0 & a & 0  &b  \\
		&  & t^{-1} & 0 \\ 
		&  & 0 & a^{-1}  
		\end{array} \right] : a\in F^{\times}, b \in F, t^2=1 \right\}.$
		
		\item[(vi)] $S_{\psi}\cap U = N^3.$
	\end{enumerate}
	It is easy to see that \S \ref{Decomp-Klingen-conditions} (4) holds as well. In a similar way the equalities (2), (3), (5) and (6) in \S \ref{Decomp-Klingen-conditions} can be proved and proofs are omitted.

	\textit{Case}(3): For $w = \sigma_1$ we have the following groups which appear in \S \ref{Decomp-Klingen-conditions} (1).
	\begin{enumerate}
		\item[(i)] $w(Q) \cap S_{\psi} = \left\{\left[\begin{array}{cccc}
		t^{-1} & 0 & 0 &0 \\
		-azt^{-1} & a & 0  &  b\\
		&  & t & z \\ 
		&  & 0 & a^{-1}  
		\end{array} \right] : a\in F^{\times}, z,b \in F, t^2=1 \right\}.$
		
		\item[(ii)] $w(Q) \cap M_{\psi} = \left\{\left[\begin{array}{cccc}
		t^{-1} & 0 &  & \\
		-azt^{-1} & a &   &  \\
		&  & t & z \\ 
		&  & 0 & a^{-1}  
		\end{array} \right] : a\in F^{\times}, z \in F, t^2=1 \right\}.$
		
		\item[(iii)] $w(Q) \cap N = N^0.$
	\end{enumerate}
	The groups which appear in \S \ref{Decomp-Klingen-conditions} (4) are as follows:
	\begin{enumerate}
		\item[(iv)] $w^{-1}(S_{\psi}) \cap Q = \left\{\left[\begin{array}{cccc}
		t^{-1} & 0 & 0& zt a^{-1} \\
		0 & a & z  & u \\
		&  & t & 0 \\ 
		&  & 0 & a^{-1}  
		\end{array} \right] : a\in F^{\times}, u, z\in F, t^2=1 \right\}.$
		
		\item[(v)] $w^{-1}(S_{\psi}) \cap L = \left\{\left[\begin{array}{cccc}
		t^{-1} & 0 & 0 &0 \\
		0 & a & 0  &u  \\
		&  & t & 0 \\ 
		&  & 0 & a^{-1}  
		\end{array} \right] : a\in F^{\times}, u \in F, t^2=1 \right\}.$
		
		\item[(vi)] $w^{-1}(S_{\psi}) \cap U = N^4.$
	\end{enumerate}
		In these cases also, the decomposability conditions are easily verified. The verification of (2), (3), (5) and (6) in \S \ref{Decomp-Klingen-conditions} can be done similarly and are omitted.

	\textit{Case}(4): Fix $w=\tau_1.$ We have the following groups relevant to \S \ref{Decomp-Klingen-conditions} (1).
	\begin{enumerate}
		\item[(i)] $w(Q) \cap S_{\psi} = \left\{\left[\begin{array}{cccc}
		a & 0 & b & at^{-1}(btu+z) \\
		-atu & t & z  & y \\
		&  & a^{-1} & u \\ 
		&  & 0 & t^{-1}  
		\end{array} \right] : t\in F^{\times}, b,u,y,z\in F, a^2=1 \right\}.$
		
		\item[(ii)] $w(Q) \cap M_{\psi} = \left\{\left[\begin{array}{cccc}
		a & 0 &  & \\
		-atu & t &   &  \\
		&  & a^{-1} & u \\ 
		&  & 0 & t^{-1}  
		\end{array} \right] : t\in F^{\times},u \in F, a^2=1 \right\}.$
		
		\item[(iii)] $w(Q) \cap N = N.$
	\end{enumerate}
	Finally, the groups relevant to \S \ref{Decomp-Klingen-conditions} (4) are the following.
		\begin{enumerate}
		
		\item[(iv)] $w^{-1}(S_{\psi}) \cap Q = \left\{\left[\begin{array}{cccc}
		v & z & u & t \\
		0 & x & v^{-1}(-zr+xt)  & r\\
		&  & v^{-1} & 0 \\ 
		&  & -z x^{-1} v^{-1} & x^{-1} 
		\end{array} \right] : v\in F^{\times}, u,t,r,z\in F, x^2=1 \right\}.$
		
		\item[(v)] $w^{-1}(S_{\psi}) \cap L = \left\{\left[\begin{array}{cccc}
		v & 0 & 0 &0 \\
		0 & x & 0  &r \\
		&  & v^{-1} & 0 \\ 
		&  & 0 & x^{-1}  
		\end{array} \right] : v\in F^{\times}, r \in F, x^2=1 \right\}.$
		
		\item[(vi)] $w^{-1}(S_{\psi}) \cap U = U.$
	\end{enumerate}
	In this case also, \S \ref{Decomp-Klingen-conditions} (1) and (4) are verified easily. The equalities (2), (3), (5) and (6) in \S \ref{Decomp-Klingen-conditions} can be proved similarly and proofs are omitted.
\end{proof}

\section{Structure of the Twisted Jacquet modules}\label{DegenerateCharacter}

In this section, we compute the twisted Jacquet module of a principal series representation $\pi$ of $Sp_{4}(F)$ induced from the Siegel parabolic subgroup $P$ (resp. Klingen parabolic subgroup $Q$) with respect to the character $\psi_{_{C}}$ of $N$ defined by \eqref{CharacterCorrspondngtoC}.  We shall denote $\psi_{_C}$ by $\psi.$ Note that the twisted Jacquet module of a representation of $Sp_{4}(F)$ is a representation of the group $M_{\psi}$ which is isomorphic to $O_2(F,C).$ To make the structure concise, the final answer will be given as a representation of the group $O_2(F,C).$ To achieve this, we shall use certain identifications which we describe in \S \ref{relevantembeddings} below.

\subsection{Some isomorphisms}\label{relevantembeddings}

Let us denote the maps given by \eqref{embedingofleviSp_{2n}(F)} and  \eqref{embedingofleviSp_{2n}(F)-Siegel} by $\alpha$ and $\beta$ respectively i.e., 
\begin{equation}\label{alpha}
\alpha\left(\left(t, \left[\begin{array}{cc}
a & b \\
c & d
\end{array}\right] \right)\right)=\left[\begin{array}{cccc}
t & & & \\
& a & & b \\
& & t^{-1} & \\
& c & & d
\end{array}\right]
\end{equation}
for all $t\in F^{\times}$ and 
$\left[\begin{array}{cc}
a & b \\
c & d
\end{array}\right] \in SL_{2}(F)$ and 

\begin{equation}\label{beta}\beta(g)=\left[\begin{array}{cc}
g &  \\
& ~^tg^{-1}  \\
\end{array}\right]\end{equation} for all $g\in GL_{2}(F).$
We identify certain subgroups of $GL_{2}(F)$ and $F^{\times}\times SL_{2}(F)$ with subgroups of $Sp_{4}(F).$  

\begin{enumerate}
	
	\item We have $\alpha(\{1\}\times N_{1,1})=N^0.$ With this identification, the group $N_{1,1}$ can be thought of as a subgroup of $Sp_{4}(F)$ when identified with $N^0$ via $\alpha.$
	
	\item Let $\overline{N_{1, 1}}$ and $T_{2}(F,C)$ be as in \eqref{N_1,1andbardefn} and \eqref{DefofT2FC} respectively. Then,
	\begin{enumerate}
		\item[(a)] $\beta(O_{2}(F,C))=M_{\psi}.$ 
		
		\item[(b)] $\beta(T_{2}(F,C))=M^{2}.$ 
		
		\item[(c)] $\beta(\overline{N_{1, 1}})=M^{1}.$
	\end{enumerate} 
	
	\item Recall that $O_{2}(F, C)$ is the semi-direct product of $T_{2}(F, C)$ and $\overline{N_{1, 1}}.$ This yields $M_{\psi} = M^2 \ltimes M^1.$ 
	
\end{enumerate}

For an element $y\in F,$ write $
n_y=\left[\begin{array}{cc}
1 &  \\
y & 1
\end{array}\right]$ and $n^y=\left[\begin{array}{cc}
1 & y \\
& 1
\end{array}\right].$ Then the group $M^{1}=\{\diag(n_y,n^{-y}):y\in F\}.$ We make one final remark before we end this paragraph.

\begin{remark}\label{quotientO_2}
	By \S \ref{relevantembeddings} (3), any representation of $T_2(F,C)$ can be regarded as a representation of $O_2(F,C)$ by composing with the quotient map from $O_2(F,C)$ to $T_2(F,C)$ and the resulting representation is trivial on $\overline{N_{1,1}}.$
\end{remark}

\subsection{The characters appearing in the computation}\label{modularcharacters} 

In this paragraph, we shall recall the formula for the characters appearing in \S \ref{geometriclemma} which needs to be computed for the case of both the Siegel and the Klingen parabolic subgroups of $Sp_{4}(F).$

\begin{enumerate}
	\item To each $w\in \{I_4,\sigma_1, \tau_1\sigma_1,\sigma_2\}$ for which \eqref{starcondition} holds, we put
	\begin{enumerate}
		\item[(i)] $\varepsilon_1(w) = {\rm mod}_{N}^{\frac{1}{2}} \cdot {\rm mod}_{w^{-1}S_{\psi}w \cap N}^{-\frac{1}{2}},$
		
		\item[(ii)] $\varepsilon_2(w) = {\rm mod}_{N}^{\frac{1}{2}} \cdot {\rm mod}_{w P w^{-1} \cap N}^{-\frac{1}{2}},$ and
		
		\item[(iii)] $\varepsilon(w)= \varepsilon_1(w) \cdot w^{-1}(\varepsilon_2(w))$ 
	\end{enumerate}
	which are characters of the groups ${w}^{-1}M_{\psi} w \cap M, wM{w}^{-1} \cap M_{\psi}$ and ${w}^{-1}M_{\psi} w \cap M$ respectively.

	\item Similarly, corresponding to each $w\in \{I_4, \tau_1,\sigma_1, \tau_1\sigma_1\}$ for which \eqref{starcondition} holds, we put
	\begin{enumerate}
		\item[(i)] $\varepsilon_1(w) = {\rm mod}_{U}^{\frac{1}{2}} \cdot {\rm mod}_{w^{-1}S_{\psi}w \cap U}^{-\frac{1}{2}},$ 
		
		\item[(ii)] $\varepsilon_2(w) = {\rm mod}_{N}^{\frac{1}{2}} \cdot {\rm mod}_{w Q w^{-1} \cap N}^{-\frac{1}{2}},$ and
		
		\item[(iii)] $\varepsilon(w)= \varepsilon_1(w) \cdot w^{-1}(\varepsilon_2(w))$ 
	\end{enumerate}
	which are characters of the groups ${w}^{-1}M_\psi w \cap L, wL{w}^{-1} \cap M_\psi$ and  ${w}^{-1}M_\psi w \cap L$ respectively.
\end{enumerate}

\subsection{Structure in the Siegel parabolic case} 

We are now ready to state and prove the first main theorem of our article, which gives the structure of the twisted Jacquet module of a principal series representation induced from the Siegel parabolic.

\begin{theorem}\label{SiegelcaseforDegA-I}
	Fix a non-trivial character $\psi_{_0}$ of the additive group $F.$ Let $P=MN$ 
	denote the Siegel parabolic subgroup of the symplectic group $Sp_{4}(F)$ and 
	let $\rho\in \Irr(GL_{2}(F))$ be regarded as a representation of the Levi 
	subgroup $M$ of $P.$ Denote $\pi=\ind_P^{Sp_{4}(F)}(\rho).$ Let $\psi$ be the character of $N$ defined by \eqref{CharacterCorrspondngtoC}. The following statements hold:
	
	\begin{enumerate}
		\item If $\rho$ is not cuspidal,  $r_{N,\psi}(\pi)$
		sits in the following short exact sequence of $O_{2}(F,C)$-modules:
		\begin{equation}\label{structure-Siegel}
		0\to  \widetilde{\rho}_{|_{O_{2}(F,C)}} \to r_{N,\psi}(\pi)\to  \rho_{_0} \to 0,
		\end{equation}
		where $\rho_{_0}$ is the representation of $O_{2}(F,C)$ obtained from $(\delta_{_{O_{2}(F,C)}}^{\frac{1}{2}} \cdot r_{\overline{N_{1, 1}}}(\rho))_{|_{T_{2}(F,C)}}$ via composing with the quotient map.

		\item If $\rho$ is cuspidal, $r_{N,\psi}(\pi) =  \widetilde{\rho}_{|_{O_{2}(F,C)}}$ as $O_{2}(F,C)$-modules. 
	\end{enumerate}
\end{theorem}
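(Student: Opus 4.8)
The plan is to apply the Bernstein--Zelevinsky Geometric Lemma as set up in \S\ref{geometriclemma}, with $k=2$ (so $P_k=P$, $N_k=N$). By Proposition \ref{S-psi-orbit-Sp4/P} the relevant double cosets are represented by $\mathcal{W}=\{I_4,\sigma_1,\tau_1\sigma_1,\sigma_2\}$, and Lemma \ref{Decomposability-verification-Siegel} supplies the decomposability hypotheses, so $r_{N,\psi}(\pi)$ is glued from the representations $\Phi_w(\rho)$ of $M_\psi\cong O_{2}(F,C)$, $w\in\mathcal{W}$. The first step is to test \eqref{starcondition} for each $w$, i.e.\ whether $\psi$ is trivial on $wNw^{-1}\cap N$. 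For $w=I_4$ this intersection is $N$, on which $\psi$ is non-trivial (Lemma \ref{Psiaction-starcondition}(1)), so $\Phi_{I_4}=0$. A direct matrix computation gives $\sigma_1 N\sigma_1^{-1}\cap N=N^0$, and conjugating by $\tau_1$, whose conjugation action on $N$ interchanges the two diagonal entries of a symmetric matrix, yields $(\tau_1\sigma_1)N(\tau_1\sigma_1)^{-1}\cap N=\tau_1 N^0\tau_1^{-1}=N^2$; since $\psi$ is trivial on $N^0$ but non-trivial on $N^2$ (Lemma \ref{Psiaction-starcondition}), $\Phi_{\tau_1\sigma_1}=0$ while the coset of $\sigma_1$ survives. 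Finally $\sigma_2 N\sigma_2^{-1}$ is the unipotent radical opposite to $N$ and meets $N$ trivially, so \eqref{starcondition} holds for $\sigma_2$. Hence only the cosets of $\sigma_1$ and $\sigma_2$ contribute.

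For $w=\sigma_2$: the groups $N'_{\sigma_2}=\sigma_2^{-1}N\sigma_2\cap M$ and $N_k'(\sigma_2)=\sigma_2 N\sigma_2^{-1}\cap M_\psi$ are trivial (as $\sigma_2^{\pm1}N\sigma_2^{\mp1}$ is opposite to $N$) and $M_\psi'(\sigma_2)=M_\psi$, so $r_{N'_{\sigma_2},\psi'_{\sigma_2}}(\rho)$ is simply a restriction of $\rho$ and, by Remark \ref{NormalizationFactor}(1), $i_{N_k'(\sigma_2),\one}$ is the identity. Using Remark \ref{mod-char-MPsi} one checks $\varepsilon(\sigma_2)=1$ (the half-modulus contributions of $\varepsilon_1(\sigma_2)$ and $\varepsilon_2(\sigma_2)$ are reciprocal powers of $|\det|_{_F}$). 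Since conjugation by $\sigma_2$ induces the automorphism $g\mapsto{}^{t}g^{-1}$ of $M\cong GL_{2}(F)$, which carries $\rho$ to a model of $\widetilde{\rho}$, this gives $\Phi_{\sigma_2}(\rho)=\widetilde{\rho}_{|_{O_{2}(F,C)}}$.

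For $w=\sigma_1$: one identifies $N'_{\sigma_1}=\sigma_1^{-1}N\sigma_1\cap M=M^1\cong\beta(\overline{N_{1,1}})$, with $\psi'_{\sigma_1}$ trivial (since $\sigma_1 M^1\sigma_1^{-1}=N^4$ and $\psi$ vanishes on $N^4$, the trace pairing of $C$ with an off-diagonal symmetric matrix being zero), together with $M_k'(\sigma_1)=\sigma_1^{-1}M_\psi\sigma_1\cap M=M^2\cong\beta(T_{2}(F,C))$, $N_k'(\sigma_1)=\sigma_1 N\sigma_1^{-1}\cap M_\psi=M^1$ and $M_\psi'(\sigma_1)=M^2$. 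Hence $r_{N'_{\sigma_1},\psi'_{\sigma_1}}(\rho)$ is the normalized Jacquet module $r_{\overline{N_{1,1}}}(\rho)$ restricted to $T_{2}(F,C)$ (the BZ normalization $\mathrm{mod}_{M^1}^{-1/2}$ agreeing with $\delta_{\overline{B}}^{-1/2}$ there), and $\varepsilon(\sigma_1)=1$ (again the half-modulus factors of $\varepsilon_1,\varepsilon_2$ cancel, by Proposition \ref{modular-character-computation} and Remark \ref{mod-char-MPsi}). Since $N_k'(\sigma_1)M_\psi'(\sigma_1)=M^1 M^2=M_\psi$, Remark \ref{NormalizationFactor}(1) gives $i_{N_k'(\sigma_1),\one}(\tau)=\mathrm{mod}_{M^1}^{1/2}\cdot\tau$, and on $T_{2}(F,C)$ the character $\mathrm{mod}_{M^1}^{1/2}$ is exactly $\delta_{_{O_{2}(F,C)}}^{1/2}$; hence $\Phi_{\sigma_1}(\rho)$, inflated to $O_{2}(F,C)$, is $(\delta_{_{O_{2}(F,C)}}^{1/2}\cdot r_{\overline{N_{1,1}}}(\rho))_{|_{T_{2}(F,C)}}$, that is $\rho_{_0}$. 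Feeding the two surviving contributions $\Phi_{\sigma_1}(\rho)=\rho_{_0}$ and $\Phi_{\sigma_2}(\rho)=\widetilde{\rho}_{|_{O_{2}(F,C)}}$ into the filtration furnished by the Geometric Lemma, in which the open orbit (that of $\sigma_2$) contributes the subrepresentation and the orbit of $\sigma_1$ the quotient, yields the short exact sequence \eqref{structure-Siegel}; this proves (1). For (2), if $\rho$ is cuspidal then $r_{\overline{N_{1,1}}}(\rho)=0$, hence $\rho_{_0}=0$ and $\Phi_{\sigma_1}(\rho)=0$, so $r_{N,\psi}(\pi)=\Phi_{\sigma_2}(\rho)=\widetilde{\rho}_{|_{O_{2}(F,C)}}$.

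I expect the main obstacle to be the modulus-character bookkeeping: tracking the half-modulus twists concealed in $\varepsilon_1(w)$, $\varepsilon_2(w)$ and in the normalized functors $r_{N'_w,\psi'_w}$ and $i_{N_k'(w),\one}$ (in particular the $\mathrm{mod}_{M^1}^{1/2}$ factor that appears exactly because $N_k'(\sigma_1)M_\psi'(\sigma_1)$ is all of $M_\psi$) and showing that their product collapses to the clean twists asserted in the theorem. This proceeds alongside identifying each of $N'_w$, $M_k'(w)$, $N_k'(w)$, $M_\psi'(w)$ with the explicit subgroups $M^1$, $M^2$, $\{I_4\}$, etc.\ (which requires the conjugation computations implicit above), and pinning down the isomorphism $\rho^{\sigma_2}\cong\widetilde{\rho}$ for $GL_{2}(F)$.
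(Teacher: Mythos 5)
Your argument is correct and follows essentially the same route as the paper: both apply the Geometric Lemma over the four double cosets of Proposition \ref{S-psi-orbit-Sp4/P}, kill the cosets of $I_4$ and $\tau_1\sigma_1$ by checking \eqref{starcondition}, identify $\Phi_{\sigma_1}(\rho)$ with the twist of $r_{\overline{N_{1,1}}}(\rho)$ inflated from $T_2(F,C)$, identify $\Phi_{\sigma_2}(\rho)$ with $\widetilde{\rho}$ via $g\mapsto{}^t g^{-1}$ (Gelfand--Kazhdan), and use openness of the $\sigma_2$-orbit to place $\widetilde{\rho}_{|_{O_2(F,C)}}$ as the subobject. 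Your shortcut for $\psi'_{\sigma_1}$ via $\sigma_1 M^1\sigma_1^{-1}=N^4$ matches the paper's explicit conjugation computation, and the modulus bookkeeping (cancellation of $\varepsilon_1,\varepsilon_2$, emergence of $\mathrm{mod}_{M^1}^{1/2}=\delta_{O_2(F,C)}^{1/2}$ from $i_{M^1,\one}$, triviality of $\varepsilon(\sigma_2)$) is exactly what the paper does.
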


\begin{proof}
	We prove the theorem in two steps. First, we apply the geometric lemma to obtain all the possible components of $r_{N,\psi}(\pi).$ Then, we divide the proof into two separate cases as to whether $\rho$ is cuspidal or not. 
	
	\noindent \textbf{Step} (1). By Lemma \ref{Decomposability-verification-Siegel}, Proposition \ref{S-psi-orbit-Sp4/P} and \S \ref{geometriclemma} we may apply the Geometric Lemma to $\pi.$ In this case, $\mathcal{W} = \{I_4, \sigma_1, \tau_1\sigma_1, \sigma_2\}$ by Proposition \ref{S-psi-orbit-Sp4/P}. We need to compute $\Phi_w$ for each $w \in \mathcal{W}.$  
	We begin the proof by showing that \eqref{starcondition} does not hold for $I_4$ and $\tau_1\sigma_1.$  For $w = I_4,$ the group $wNw^{-1} \cap N$ is $N.$ On the other hand, for $w = \tau_1 \sigma_1,$ we have $ wNw^{-1} \cap N =N^2 .$ 
		By Lemma \ref{Psiaction-starcondition} (1) the character $\psi$ of $N$ is non-trivial on both the subgroups $N$ and $N^2.$ Hence, neither $I_4$ nor $\tau_1\sigma_1$ contribute to the twisted Jacquet module. Thus, $\Phi_{I_4}=0$ and $\Phi_{\tau_1\sigma_1}=0.$
	
	For the remaining two $w$'s i.e., for $w=\sigma_1$ and $w=\sigma_2,$ we have $wNw^{-1}\cap N$ to be equal to $N^0$ and $\{I_4\}$ respectively. By Lemma \ref{Psiaction-starcondition} (2), $\psi=1$ on $N^0.$ Thus, \eqref{starcondition} holds for both $\sigma_1$ and $\sigma_2.$ We proceed to calculate $\Phi_w$ in both the cases.
	
	\noindent\textit{Case}(1): Fix $w = \sigma_1.$  We calculate the relevant subgroups as per \S \ref{geometriclemma} to compute the contribution. They are as follows:
	
	\begin{enumerate}
		\item $
		{M_{\psi}}'(w) =  wM{w}^{-1} \cap M_{\psi}= M^2.$

		\item $N'(w)= wN{w}^{-1} \cap M_{\psi} = M^1.$

		\item $N'_{w}  = {w}^{-1}Nw \cap M = M^1.$

		\item ${\psi}'_{w} : N'_{w} \rightarrow \C^{\times}$ is the character given by $X \mapsto \psi(wX{w}^{-1}),$ for every $X \in N'_{w}.$ Writing $X \in M^1$ as $X = \diag(n_y,n^{-y}),$ we get $wX{w}^{-1} = \left[\begin{array}{cc}
		I_2 & \left[\begin{array}{cc}
		0 & y \\
		y & 0
		\end{array}\right] \\
		&I_2  
		\end{array} \right],$ and 
		${\psi}'_{w}(X) = \psi_{_0}\left(\tr\left(C \left[\begin{array}{cc}
		0 & y \\
		y & 0
		\end{array}\right] \right)\right) =1.$ Hence, ${\psi}'_{w} = \one.$
		
		\item $
		M'(w)  = {w}^{-1}M_{\psi} w \cap M= M^2.$

		\item $  {w}^{-1}S_{\psi} w \cap N =N^1.$

		\item $  wP {w}^{-1} \cap N = N^1.$
	\end{enumerate}
	Next, we compute the character $\varepsilon(w)$ (see \S \ref{modularcharacters} (1)) of the group $M^2.$  Let an element $m \in M^2$ be written as $m={\rm diag}(a,d,a^{-1},d^{-1})$ where $a^2=1$ and $d\in F^{\times}.$ By Remark \ref{mod-char-MPsi} and Proposition \ref{modular-character-computation} (2), we have $[\varepsilon_1(w)](m)={\rm mod}_{N}^{\frac{1}{2}}(m) \cdot {\rm mod}_{N^1}^{-\frac{1}{2}}(m) =|d|_{_F}^{\frac{3}{2}} \cdot |d|_{_F}^{-\frac{3}{2}}=1.$ Similarly, $[\varepsilon_2(w)](m)=1.$ Hence $[\varepsilon(w)](m)=1$ for all $m\in M^2.$
	
	We calculate the action given by \eqref{inducingaction}. Write $m \in M^2$ as $m=\diag(a,d,a^{-1},d^{-1}).$ 
	Then, \begin{align*}
	[\varepsilon(w) \cdot r_{N'_{w},{\psi}'_{w}}(\rho)]^{w}(m) &=  \varepsilon(w)[w^{-1}mw] \cdot r_{N'_{w},{\psi}'_{w}}(\rho)[w^{-1}mw]  \\
	&=  \varepsilon(w)[\diag(a^{-1},d,a,d^{-1})]\cdot r_{M^1,\one}(\rho)[\diag(a^{-1},d,a,d^{-1})]  \\ 
	&= 1 \cdot r_{M^1,\one}(\rho)\left(\diag(a^{-1},d,a,d^{-1}) \right) \\
	&=r_{M^1,\one}(\rho)(m). 
\end{align*}
By \eqref{contributionPhi(w)}, $\Phi_{w}(\rho)=i_{_{M^1, \one}}(r_{M^1,\one}(\rho)).$  Since $M_\psi=M^2M^1,$ we can write $\Phi_{w}(\rho)={\rm mod}_{M^1}^{\frac{1}{2}}\cdot r_{M^1,\one}(\rho)$ by Remark \ref{NormalizationFactor} (1) (applied with $\mathcal{U}=M^{1}$ and $\mathcal{M}=M^2$), a representation of $M_{\psi}$ which acts trivially on $M^1.$ 
By \S \ref{relevantembeddings} (2), the action $r_{M^1,\one}(\rho)(m)$ can be identified with $r_{\overline{N_{1,1}}}(\rho)\left(\left[\begin{array}{cc}
	a &  0 \\
    0	& d   
	\end{array} \right] \right).$  Also, ${\rm mod}_{M^1}(m)=|d|_{_F}={\delta_{_{O_2(F,C)}}}\left(\left[\begin{array}{cc}
	a &  0 \\
	0	& d   
\end{array} \right] \right)$ by Proposition \ref{modular-character-computation} (5) and Remark \ref{mod-char-MPsi}. We conclude via \S \ref{relevantembeddings} (2) and (3) that  $\Phi_{w}(\rho)=\delta_{_{O_2(F,C)}}^{\frac{1}{2}} \cdot r_{\overline{N_{1,1}}}(\rho)_{|_{T_{2}(F,C)}},$ a representation of $O_2(F,C)$ with $\overline{N_{1,1}}$ action being trivial. Its action on elements of $O_2(F,C)$  is given by   
	$\Phi_{w}(\rho)\left(\left[\begin{array}{cc}
	\pm 1 & 0 \\
	y & d
	\end{array}\right] \right)=\delta_{_{O_2(F,C)}}^{\frac{1}{2}}\left(\left[\begin{array}{cc}
	\pm 1 & 0 \\
	0	& d
	\end{array}\right] \right) \cdot r_{\overline{N_{1, 1}}}(\rho)_{|_{T_{2}(F,C)}}\left(\left[\begin{array}{cc}
	\pm 1 & 0 \\
     0	& d
	\end{array}\right] \right).$ 
	Thus, $\Phi_{w}(\rho)$ is equal to the representation $(\delta_{_{O_{2}(F,C)}}^{\frac{1}{2}}  \cdot r_{\overline{N_{1, 1}}}(\rho))_{|_{T_{2}(F,C)}}$ of $T_2(F,C)$ regarded as a representation of $O_2(F,C)$ via Remark \ref{quotientO_2}.
	
\noindent \textit{Case}(2): Fix $w = \sigma_2.$ We have the following subgroups which feature in determining the contribution.
\begin{enumerate}
		\item $
		{M_{\psi}}'(w)  = wM{w}^{-1} \cap M_{\psi}
		=  M_{\psi}.$				
		\item $N'(w)
		= wN{w}^{-1} \cap M_{\psi} = \{I_4\}.$  		
		\item $N'_{w}  = {w}^{-1}Nw \cap M = \{I_4\}.$	
			
		\item 		${\psi}'_{w} = \one.$		
		
		\item $	M'(w) = {w}^{-1}M_{\psi} w \cap M
		= M_{\psi}.$		
		\item $  {w}^{-1}S_{\psi} w \cap N = \{I_4\}.$		
		\item $  wP {w}^{-1} \cap N = \{I_4\}.$
	\end{enumerate}
	We compute the character $\varepsilon(w)$ of $M_{\psi}$ (see \S \ref{modularcharacters} (1)). 
	 Suppose $X = \left[\begin{array}{cc}
	g &   \\
	& ^tg^{-1}   
	\end{array} \right] \in M_{\psi}$ with $g =\left[\begin{array}{cc}
	a & 0\\
	c & d
	\end{array}\right].$ By Remark \ref{mod-char-MPsi}, we have  $[\varepsilon_1(w)](X)=|d|_{_F}^{\frac{3}{2}}.$ Similarly $[\varepsilon_2(w)](X)=|d|_{_F}^{\frac{3}{2}}.$ Since $wXw^{-1}=\left[\begin{array}{cc}
	^tg^{-1} &   \\
	& g   
	\end{array} \right], [\varepsilon(w)](X) = [\varepsilon_1(w)](X) \cdot [\varepsilon_2(w)](wXw^{-1})=|d|_{_F}^{\frac{3}{2}} \cdot |d|_{_F}^{-\frac{3}{2}}=1.$

	We shall compute the action given by \eqref{inducingaction}. Write $X \in M_{\psi}$ as $X = \left[\begin{array}{cc}
	g &  \\
	& ^tg^{-1} \end{array} \right].$ Then, 
	\begin{align*}
	[\varepsilon(w) \cdot r_{N'_{w},{\psi}'_{w}}(\rho)]^{w}(X) &=  \varepsilon(w)[w^{-1}Xw] \cdot r_{N'_{w},{\psi}'_{w}}(\rho)[w^{-1}Xw]  \\
	&=  \varepsilon(w)\left(\left[\begin{array}{cc}
	^tg^{-1} &  \\
	& g \end{array} \right]\right)\cdot r_{\{I_4\},\one}(\rho)\left(\left[\begin{array}{cc}
	^tg^{-1} &  \\
	& g \end{array} \right]\right) \\
	&= 1 \cdot r_{\{I_4\},\one}(\rho)\left(\left[\begin{array}{cc}
	^tg^{-1} &  \\
	& g \end{array} \right] \right) \\
    &= \rho(^tg^{-1}) [ \mbox{ by Remark \ref{NormalizationFactor} (2) } ].	\end{align*}
Let $(\rho_{_1},V)$ be  the representation of $GL_2(F)$ defined  by $\rho_{_1}(g)=\rho(^tg^{-1})$ for $g\in GL_2(F).$  Therefore, $[\varepsilon(w) \cdot r_{N'_{w},{\psi}'_{w}}(\rho)]^{w}(X)=\rho_{_1}(X)$ for all $X\in M_{\psi}.$
By \eqref{contributionPhi(w)}, $\Phi_{w}(\rho) = i_{_{\{I_4\}, \one}}(\rho_{_1}),$  which by Remark \ref{NormalizationFactor} (1) is further equal to $\rho_{_1}.$ It is well-known by a theorem of Gelfand-Kazhdan (\cite[Theorem 4.2.2]{Bump})  that $\rho_{_1}$ is equivalent to $\widetilde{\rho}$ as representations of $GL_2(F).$ By this theorem, $\Phi_{w}(\rho)\cong\widetilde{\rho}_{|_{M_{\psi}}}.$ Since $O_{2}(F,C)$ corresponds to $M_{\psi}$ in $Sp_{4}(F)$ under the map $\beta$ given by \eqref{beta}, $\Phi_{w}(\rho)= \widetilde{\rho}_{|_{O_{2}(F,C)}}.$
Thus, $r_{N,\psi}(\pi)$ is glued from the representations $\Phi_{\sigma_1}(\rho)= (\delta_{_{O_{2}(F,C)}}^{\frac{1}{2}}  \cdot r_{\overline{N_{1, 1}}}(\rho))_{|_{T_{2}(F,C)}} $ and  $\Phi_{\sigma_2}(\rho)= \widetilde{\rho}_{|_{O_{2}(F,C)}}$  completing \textbf{Step} (1).
\smallskip
	
\noindent \textbf{Step} (2). If $\rho$ is cuspidal, we of course have $\Phi_{\sigma_1}(\rho)=0.$ Hence, $r_{N,\psi}(\pi)=  \widetilde{\rho}_{|_{O_{2}(F,C)}}.$ On the other hand if $\rho$ is not cuspidal, $r_{\overline{N_{1, 1}}}(\rho)$ is non-zero and therefore the finer structure of $r_{N,\psi}(\pi)$ needs to be determined. Therefore, assume that $\rho$ is not cuspidal. Recall from Proposition \ref{S-psi-orbit-Sp4/P} that $ P \backslash Sp_{4}(F)= Orb_{S_{\psi}}(V_0) \bigsqcup Orb_{S_{\psi}}(\sigma_1 \cdot V_0) \bigsqcup Orb_{S_{\psi}}(\tau_1 \sigma_1 \cdot V_0) \bigsqcup Orb_{S_{\psi}}(\sigma_2 \cdot V_0).$ Note that $Orb_{S_{\psi}}(\sigma_2 \cdot V_0)=Orb_{P}(\sigma_2 \cdot V_0).$ Since $Orb_{P}(\sigma_2 \cdot V_0)$ is open in $P \backslash Sp_{4}(F)$ (via Remark \ref{open-orbits} (1)), we conclude that $Orb_{S_{\psi}}(\sigma_2 \cdot V_0)$ is open in $P \backslash Sp_{4}(F).$ Therefore, $\Phi_{\sigma_2}(\rho)$ appears as a subrepresentation (by second assertion of \cite[\S 5, Theorem 5.2]{BZ2}) of $r_{N,\psi}(\pi)$ and we have a short exact sequence:
	$$0 \to \Phi_{\sigma_2}(\rho) \to r_{N,\psi}(\pi) \to \rho_{_0} \to 0$$ where $\rho_{_0}$ is glued from representations $\Phi_{I_4}(\rho),$ $\Phi_{\sigma_1}(\rho)$ and $\Phi_{\tau_1\sigma_1}(\rho).$ But as $\Phi_{I_4}(\rho)=0=\Phi_{\tau_1\sigma_1}(\rho),$ we get $\rho_{_0}=\Phi_{\sigma_1}(\rho),$ which proves \eqref{structure-Siegel}. The proof of Theorem \ref{SiegelcaseforDegA-I} is now complete. \qedhere

\end{proof}

\subsection{Structure in the Klingen parabolic case}

In this section, we state and prove the second main result of our article which describes the structure of the twisted Jacquet module of a principal series representation induced from the Klingen parabolic. We make the following observations before we go to the main results.

\subsubsection{}\label{firstobservation} 
Consider the  following diagram. 

\begin{center}
	\usetikzlibrary{positioning,arrows.meta}
	\tikzcdset{row sep/normal=50pt, column sep/normal=50pt}
	
	\begin{tikzpicture}[
		on grid,
		every node/.style={anchor=base,minimum size=2mm},
		node distance=5mm and 3cm,
		]
		\node (ab)                {\(N_{1,1}\)};
		\node (Rn)  [right=of ab] {\(1 \times N_{1,1}\)};
		\node (R)   [right=of Rn] {\(N^0\)};
		\node (t)   [right=of R] {\(N^2\)};
		\node (ft)  [right=of t]  {\(\C^{\times}\)};
		\node (ftz) [below=of R] {\(X\)};
		\node (X)   [right=of ftz] {\(w X w^{-1}\)};
		\node (W)   [right=of X] {\(\psi(w X w^{-1})\)};
		\draw [<->] (ab) to node [above,font=\scriptsize] {\(\)} (Rn);
		\draw [<->] (Rn) to node [above,font=\scriptsize] {\(\alpha\)} (R);
		\draw [->] (R) to node [above,font=\scriptsize] {\(w\)} (t);
		\draw [->] (t) to node [above] {\(\psi_{|_{N^2}}\)} (ft);
		\draw [->, bend left] (R) to node [above] {\(\psi_{\gamma}:=\psi_{|_{N^2}} \circ w\)} (ft);
		\draw [|->] (ftz) to (X);
		\draw [|->] (X) to (W); 
	\end{tikzpicture}
\end{center}
The map $\alpha$ is the embedding in \eqref{alpha} and by $w,$ we mean conjugation by the element $w=\tau_1$ or $w=\tau_1\sigma_1.$
We note that any character of the group  $N^0$ can be identified with a character of $N_{1,1}$ via $\alpha$ and hence with characters of $(F,+).$  Also, note that the group $N^2$ is the conjugate of $N^0$ under both $\tau_1$ and $\tau_1\sigma_1.$ Thus, the restriction of the character $\psi$ of $N$ given by \eqref{CharacterCorrspondngtoC} to $N^2$ can be thought of as a character of $N^0$ after conjugation by $\tau_1$ or $\tau_1\sigma_1$.   
Then,
\begin{equation*}
	(\psi_{|_{N^2}}\circ w) \left(\left[\begin{array}{cc}
		I_2 & \left[\begin{array}{cc}
			0 & 0 \\
			0 & x
		\end{array}\right]  \\
		& I_2 
	\end{array} \right]\right) =\psi_{\gamma}\left(\left[\begin{array}{cc}
		1 & x \\
		0 & 1
	\end{array}\right]\right) = \psi_{_0}(\gamma x)
\end{equation*} for every $x \in F,$ where $\psi_{\gamma}$ is the character of $N_{1,1}$ given by \eqref{Definitionpsi_x}. By abuse of notation, we denote the character $\psi_{|_{N^2}} \circ w$ of $N^0$ again by $\psi_{\gamma}.$

\subsubsection{}\label{part2-of-firstobservation}

Note that the groups $M^2$ and $M^3$  are naturally identified with $\{\pm 1 \} \times F^{\times}$ and $F^{\times} \times \{\pm 1\}$ respectively via $\alpha.$
In view of \S \ref{firstobservation}, the functor $r_{N^0, \psi_{\gamma}}: \Alg(L)\to \Alg(M^3)$ can be identified with the functor $r_{1\times N_{1,1},1\times \psi_{\gamma}}: \Alg(F^{\times} \times SL_2(F))\to \Alg(F^{\times}\times \{\pm 1\}).$ Let $D$ denote the subgroup $\{\diag(x,y,x^{-1},y^{-1}): x,y\in F^{\times}\}$ of $Sp_4(F).$ Then, the functor $r_{N^0, \one}: \Alg(L)\to \Alg(D)$ can be identified with the functor $r_{1\times N_{1,1},1\times \one}: \Alg(F^{\times} \times SL_2(F))\to \Alg(F^{\times}\times M_{1,1})$ where $M_{1,1}=\{\diag(y, y^{-1}): y\in F^{\times}\}.$ By composing with the restriction functor from $\Alg(D)$ to $\Alg(M^2),$ we get $r_{N^0,\one}:\Alg(L)\to \Alg(M^2)$ which identifies with $r_{1\times N_{1,1},1\times \one}: \Alg(F^{\times} \times SL_2(F))\to \Alg(\{\pm 1\}\times M_{1,1}).$ We note the following lemma in which we calculate conjugate of certain representations. These conjugates are representations of $M^2$ which arise in the calculation done in Theorem \ref{KlingencaseforDegA-I}.

\begin{lemma}\label{secondobservation} 
	Let $\eta$ be a character of $F^{\times}$ and $\tau\in \Irr(SL_2(F)).$ Let $d\in F^{\times}$ and $a^2=1.$ For each $\diag(a,d,a^{-1},d^{-1})\in M^2,$ the following hold:
	
	\begin{enumerate}
		\item  $[r_{N^0, \one}(\eta\otimes \tau)]^{\sigma_1}(\diag(a,d,a^{-1},d^{-1}))= \eta_{|_{\{\pm 1\}}}(a)\otimes r_{N_{1,1}}(\tau)(\diag(d,d^{-1})).$ 
		
		\item  $[r_{N^0, \psi_{\gamma}}(\eta\otimes \tau)]^{\tau_1}(\diag(a,d,a^{-1},d^{-1}))=r_{N_{1,1},\psi_{\gamma}}(\tau)(\pm I_2) \otimes \eta(d).$ 
		
		\item  $[r_{N^0, \psi_{\gamma}}(\eta\otimes \tau)]^{\tau_1 \sigma_1}(\diag(a,d,a^{-1},d^{-1}))= r_{N_{1,1},\psi_{\gamma}}(\tau)(\pm I_2) \otimes \eta^{-1}(d).$ 
	\end{enumerate} 	
\end{lemma}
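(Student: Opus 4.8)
The plan is to compute each of the three conjugated representations directly from the definition of the conjugation functor $\rho\mapsto\rho^w$ recalled in \S\ref{geometriclemma}, using the explicit matrix identities for $\tau_1$, $\sigma_1$ and $\tau_1\sigma_1$ together with the identifications in \S\ref{relevantembeddings} and \S\ref{part2-of-firstobservation}. Recall that for $\rho\in\Alg(M_k'(w))$ one sets $\rho^w(m)=\rho(w^{-1}mw)$; here $M_k=L$ and the relevant $\rho$ is $r_{N^0,\one}(\eta\otimes\tau)$ or $r_{N^0,\psi_\gamma}(\eta\otimes\tau)$, which by \S\ref{part2-of-firstobservation} is genuinely a representation of $M^3$ (in the untwisted case, after restriction, of $M^2$) since $N^0=\alpha(1\times N_{1,1})$ has normalizer in $L$ containing the relevant diagonal torus. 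So the content of the lemma is: for $m=\diag(a,d,a^{-1},d^{-1})\in M^2$, first compute $w^{-1}mw$, check it lands in the appropriate subgroup of $L$ on which the Jacquet module representation is defined, and then read off the value under the identification $L\cong F^\times\times SL_2(F)$.

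First I would dispose of part (1). Here $w=\sigma_1$, and from \eqref{sigma1-def} one checks that $\sigma_1^{-1}\diag(a,d,a^{-1},d^{-1})\sigma_1=\diag(a^{-1},d,a,d^{-1})$, which under $\alpha$ corresponds to $(a^{-1},\diag(d,d^{-1}))\in F^\times\times SL_2(F)$ — note $a^{-1}=a$ since $a^2=1$. Applying $r_{N^0,\one}(\eta\otimes\tau)$, which by \S\ref{part2-of-firstobservation} identifies with $r_{1\times N_{1,1},1\times\one}$ taking values in $\Alg(\{\pm1\}\times M_{1,1})$, and using that the Jacquet functor on a tensor product factors as $\eta\otimes r_{N_{1,1}}(\tau)$, we get exactly $\eta_{|_{\{\pm1\}}}(a)\otimes r_{N_{1,1}}(\tau)(\diag(d,d^{-1}))$. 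This is essentially a one-line matrix computation plus bookkeeping.

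For parts (2) and (3), $w=\tau_1$ and $w=\tau_1\sigma_1$ respectively, where $\tau_1=\beta(h_1)=\diag(h_1,{}^th_1^{-1})$ with $h_1=\left[\begin{smallmatrix}0&1\\1&0\end{smallmatrix}\right]$. The key computation is $\tau_1^{-1}\diag(a,d,a^{-1},d^{-1})\tau_1$: conjugating $\diag(a,d)$ by $h_1$ swaps the two diagonal entries to give $\diag(d,a)$, and similarly on the lower block, so $\tau_1^{-1}m\tau_1=\diag(d,a,d^{-1},a^{-1})$, which under $\alpha$ is the element $(d,\pm I_2)\in F^\times\times SL_2(F)$ since $a=\pm1$. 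Now $r_{N^0,\psi_\gamma}(\eta\otimes\tau)$ identifies via \S\ref{part2-of-firstobservation} and \S\ref{firstobservation} with $r_{1\times N_{1,1},1\times\psi_\gamma}$, and on the tensor product this is $\eta\otimes r_{N_{1,1},\psi_\gamma}(\tau)$; evaluating at $(d,\pm I_2)$ gives $\eta(d)\otimes r_{N_{1,1},\psi_\gamma}(\tau)(\pm I_2)$, i.e. $r_{N_{1,1},\psi_\gamma}(\tau)(\pm I_2)\otimes\eta(d)$ after reordering the tensor factors, proving (2). For (3) one composes the computation of (1) with that of (2): $(\tau_1\sigma_1)^{-1}m(\tau_1\sigma_1)=\sigma_1^{-1}(\tau_1^{-1}m\tau_1)\sigma_1$, and from (2) the inner conjugate is $\diag(d,a,d^{-1},a^{-1})$; conjugating this by $\sigma_1$ as in part (1) sends the $F^\times$-coordinate $d$ to $d^{-1}$ while fixing the $SL_2$-coordinate $\pm I_2$, yielding $r_{N_{1,1},\psi_\gamma}(\tau)(\pm I_2)\otimes\eta^{-1}(d)$.

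The only genuine subtlety — and the step I would be most careful about — is verifying that in each case $w^{-1}mw$ actually lies in the subgroup $M_k'(w)=w^{-1}M_\psi w\cap L$ on which the functor $r_{N'_w,\psi'_w}$ (equivalently, here, the appropriate untwisted/twisted Jacquet module of $\eta\otimes\tau$) is defined, and that $N^0$ is preserved, so that the formula $\rho^w(m)=\rho(w^{-1}mw)$ makes sense with $\rho$ the relevant Jacquet module and not just the full restriction of $\eta\otimes\tau$. This follows from the decomposability computations in Lemma \ref{Decomposability-verification-Klingen} (specifically the descriptions of $w^{-1}(S_\psi)\cap Q$, $w^{-1}(S_\psi)\cap L$ and $w^{-1}(S_\psi)\cap U$ in Cases (3) and (4) there), which already exhibit these conjugate groups explicitly; I would simply cite those descriptions and match them with the diagonal elements appearing above. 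Everything else is routine matrix multiplication and tracking the identifications $M^2\cong\{\pm1\}\times F^\times$, $M^3\cong F^\times\times\{\pm1\}$, $L\cong F^\times\times SL_2(F)$ set up in \S\ref{relevantembeddings} and \S\ref{part2-of-firstobservation}, so I would present the three cases compactly without belaboring the arithmetic.
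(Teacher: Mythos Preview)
Your proposal is correct and follows essentially the same approach as the paper: compute $w^{-1}mw$ explicitly for $m\in M^2$ and each $w\in\{\sigma_1,\tau_1,\tau_1\sigma_1\}$, then invoke the identifications of \S\ref{part2-of-firstobservation} to read off the action as $\eta\otimes r_{N_{1,1}}(\tau)$ or $\eta\otimes r_{N_{1,1},\psi_\gamma}(\tau)$ on the appropriate coordinates. The only cosmetic difference is that for part~(3) the paper computes $(\tau_1\sigma_1)^{-1}m(\tau_1\sigma_1)=\diag(d^{-1},a,d,a^{-1})$ in one step, whereas you factor it as $\sigma_1^{-1}(\tau_1^{-1}m\tau_1)\sigma_1$; both arrive at the same element, and your extra remark about $w^{-1}mw$ landing in $M_k'(w)$ is harmless extra care the paper omits.
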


\begin{proof}
	
	Write $m\in M^2$ as $m=\diag(a,d,a^{-1}, d^{-1})$ with $a^2=1$ and $d\in F^{\times}.$	
	For proof of (1), note that, $\sigma_1^{-1}m\sigma_1=\diag(a^{-1},d,a,d^{-1})=\diag(a,d,a^{-1},d^{-1})=m,$  i.e.,  $M^2$  is fixed point-wise via conjugation by $\sigma_1^{-1}.$ Hence, $[r_{N^0, \one}(\eta\otimes \tau)]^{\sigma_1}=r_{N^0, \one}(\eta\otimes \tau).$  By \S \ref{part2-of-firstobservation}, $[r_{N^0, \one}(\eta\otimes \tau)]^{\sigma_1}=\eta\otimes r_{N_{1,1}}(\tau)$ proving (1).
	We have $\tau_1^{-1}m\tau_1=\diag(d,a,d^{-1},a^{-1})$ and $(\tau_1\sigma_1)^{-1}m(\tau_1\sigma_1)=\diag(d^{-1},a,d,a^{-1}).$ Thus, for $w\in \{ \tau_1, \tau_1\sigma_1\},$ we have $w^{-1}M^2w=M^3.$ By \S \ref{part2-of-firstobservation}, $r_{N^0, \psi_{\gamma}}(\eta\otimes \tau)$ is equal to $\eta\otimes r_{N_{1,1},\psi_{\gamma}}(\tau).$ 
	We now prove (2) and (3). To prove (2),  fix $w = \tau_1$ and note that 
	\begin{align*}
	[r_{N^0, \psi_{\gamma}}(\eta\otimes \tau)]^{w}(\diag(a,d,a^{-1},d^{-1})) &=r_{N^0, \psi_{\gamma}}(\eta\otimes \tau)(\diag(d,a,d^{-1},a^{-1})) \\
	&=\eta(d)\otimes r_{N_{1,1}, \psi_{\gamma}}(\tau)(\diag(a,a^{-1})) \\
	&=r_{N_{1,1}, \psi_{\gamma}}(\tau)(\diag(a,a^{-1})) \otimes \eta(d).
	\end{align*} 
	Assertion (2) of the lemma follows now since $a^2=1$.
	For proof of (3), fix  $w= \tau_1\sigma_1.$ Then, 
	\begin{align*}
	[r_{N^0, \psi_{\gamma}}(\eta\otimes \tau)]^{w}(\diag(a,d,a^{-1},d^{-1}))& =r_{N^0, \psi_{\gamma}}(\eta\otimes \tau)(\diag(d^{-1},a,d,a^{-1}))\\
	&=\eta(d^{-1})\otimes r_{N_{1,1}, \psi_{\gamma}}(\tau)(\diag(a,a^{-1})) \\
	&=r_{N_{1,1}, \psi_{\gamma}}(\tau)(\diag(a,a^{-1})) \otimes \eta(d^{-1}). \qedhere 
	\end{align*} \end{proof}

\noindent We now present the second main result of this article.

\begin{theorem}\label{KlingencaseforDegA-I} 
	Fix a non-trivial character $\psi_{_0}$ of the additive group $F.$ Let $Q = L U$ denote the Klingen parabolic subgroup of the symplectic group $Sp_{4}(F).$ Suppose $\varrho = \eta \otimes \tau \in \Irr(L)$ where $\eta$ is a character of $F^{\times}$ and  $\tau \in \Irr(SL_{2}(F)).$ Denote $\Pi = \ind_{Q}^{Sp_{4}(F)}\left(\varrho\right).$ Let $\psi$ be the character of $N$ defined by \eqref{CharacterCorrspondngtoC}. Let $\varrho_{_0}$ and $\varrho_{_1}$ denote the representations of $O_{2}(F,C)$ obtained respectively from the representations $\delta_{_{O_{2}(F,C)}}^{\frac{1}{2}}  \cdot (r_{N_{1,1}, \psi_{\gamma}}(\tau) \otimes \eta)$ and $\delta_{_{O_{2}(F,C)}}^{\frac{1}{2}} \cdot (\eta_{_{|_{\{\pm{1}\}}}}\otimes  r_{N_{1,1}}(\tau))$ of $T_2(F,C)$ via composing with the quotient map. Let $\varrho_{_2}$ denote the representation
	$r_{N_{1,1}, \psi_{\gamma}}(\tau) \otimes \eta^{-1} $ of $T_2(F,C).$
	Then, $r_{N,\psi}(\Pi)$ is glued from the representations $\varrho_{_0},$ $\varrho_{_1}$ and 
	$\ind_{T_{2}(F,C)}^{O_{2}(F,C)}(\varrho_{_2})$ of $O_{2}(F,C).$ 
	
\end{theorem}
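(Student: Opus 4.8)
The plan is to mirror the proof strategy used for Theorem \ref{SiegelcaseforDegA-I}, i.e., a two-step argument: first apply the Geometric Lemma via the double coset representatives furnished by Proposition \ref{S-psi-orbit-Sp4/Q}, computing the functor $\Phi_w$ for each of the four $w\in\mathcal{W}=\{I_4,\tau_1,\sigma_1,\tau_1\sigma_1\}$; then assemble the pieces, using the openness/closedness information from Remark \ref{open-orbits} (2) to pin down the gluing order. Since Lemma \ref{Decomposability-verification-Klingen} verifies all decomposability conditions and Proposition \ref{S-psi-orbit-Sp4/Q} verifies finiteness of orbits, the hypotheses of the Geometric Lemma \cite[\S 5, Theorem 5.2]{BZ2} are in force, so this is legitimate.

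\textbf{Step 1: discarding the trivial contribution.} First I would check condition \eqref{starcondition} for each $w$. For $w=\sigma_1$, one computes $wUw^{-1}\cap N$ and finds (using the explicit form of $\sigma_1$ in \eqref{sigma1-def}) that $\psi$ restricted to it is non-trivial because it meets $N^3=N\cap U$ on which $\psi$ is non-trivial by Lemma \ref{Psiaction-starcondition} (1); hence $\Phi_{\sigma_1}=0$. For the remaining three $w\in\{I_4,\tau_1,\tau_1\sigma_1\}$ one checks $\psi=1$ on $wUw^{-1}\cap N$: for $w=I_4$ this intersection is $U\cap N=N^3$... wait — more carefully, for $w = I_4$ one has $wN_1w^{-1}\cap N = U\cap N = N^3$, and $\psi$ is non-trivial there, so actually $\Phi_{I_4}=0$ as well; the surviving indices are $w=\tau_1$ and $w=\tau_1\sigma_1$, where the relevant intersection turns out to be (a conjugate of) $N^0$ on which $\psi$ vanishes by Lemma \ref{Psiaction-starcondition} (2). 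I should double-check this case analysis carefully against the groups tabulated in Lemma \ref{Decomposability-verification-Klingen} Cases (2)--(4), since it is exactly the point where the rank-one degeneracy of $\psi$ creates the extra surviving orbit compared to the non-degenerate situation; in the non-degenerate Bessel setting only the open orbit contributes.

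\textbf{Step 2: computing $\Phi_{\tau_1}$ and $\Phi_{\tau_1\sigma_1}$.} For each surviving $w$ I would read off the five subgroups $M_\psi'(w)$, $N_1'(w)$, $N'_w$, $\psi'_w$, $L'(w)$ from the data in Lemma \ref{Decomposability-verification-Klingen} (the entries (i)--(vi) in Cases (2) and (4) give exactly $w(Q)\cap S_\psi$, etc.), then compute the normalization character $\varepsilon(w)=\varepsilon_1(w)\cdot w^{-1}(\varepsilon_2(w))$ using the modular character formulas in Proposition \ref{modular-character-computation}, Remark \ref{mod-char-MPsi} and Remark \ref{mod-char-MPsi-Klingen-also}. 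Crucially, $N'_w=w^{-1}Nw\cap L$ will equal $N^0$ (the image of $1\times N_{1,1}$ under $\alpha$), and $\psi'_w$ will be the character $\psi_\gamma$ of \S \ref{firstobservation}; hence the inner functor $r_{N'_w,\psi'_w}$ is, via \S \ref{part2-of-firstobservation}, the functor $r_{1\times N_{1,1},1\times\psi_\gamma}$ applied to $\varrho=\eta\otimes\tau$, yielding $\eta\otimes r_{N_{1,1},\psi_\gamma}(\tau)$. Then Lemma \ref{secondobservation} (2) and (3) compute the conjugates by $\tau_1$ and $\tau_1\sigma_1$ as representations of $M^2\cong T_2(F,C)$, giving $r_{N_{1,1},\psi_\gamma}(\tau)\otimes\eta$ and $r_{N_{1,1},\psi_\gamma}(\tau)\otimes\eta^{-1}$ respectively (up to the $\delta^{1/2}$ twist from $\varepsilon(w)$ together with $i_{N_1'(w),\one}$, using Remark \ref{NormalizationFactor} (1)). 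For $w=\tau_1$ the group $N_1'(w)=w(Q)\cap M_\psi$ meets $M^1$ trivially so $i_{N_1'(w),\one}$ is (essentially) the identity and we land on $\varrho_{_0}$ after the quotient-map inflation of Remark \ref{quotientO_2}; for $w=\tau_1\sigma_1$ the group $w^{-1}(S_\psi)w\cap U=U$ is all of $U$, which forces the inner Jacquet functor $r_{N'_w,\psi'_w}$... no — here instead $M_k'(w)=w^{-1}M_\psi w\cap L = M^3$ which is a proper subgroup of $M_\psi$'s image, so the contribution is genuinely an \emph{induced} representation $\ind_{T_2(F,C)}^{O_2(F,C)}(\varrho_{_2})$ with $\varrho_{_2}=r_{N_{1,1},\psi_\gamma}(\tau)\otimes\eta^{-1}$. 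A small subtlety I must handle is which of $\tau_1$, $\tau_1\sigma_1$ produces $\varrho_{_1}$: I expect $\varrho_{_1}=\delta^{1/2}\cdot(\eta_{|\{\pm1\}}\otimes r_{N_{1,1}}(\tau))$ arises from the \emph{untwisted} Jacquet functor along a third relevant index — so I should recount, since $\varrho_{_1}$ involves $r_{N_{1,1}}(\tau)$, not $r_{N_{1,1},\psi_\gamma}(\tau)$, meaning one of the orbits contributes through $r_{N^0,\one}$ rather than $r_{N^0,\psi_\gamma}$, matching Lemma \ref{secondobservation} (1) and the $w=\sigma_1$-type conjugation. This bookkeeping — correctly matching each of the three nonzero $\Phi_w$ to $\varrho_{_0}$, $\varrho_{_1}$, $\ind_{T_2}^{O_2}(\varrho_{_2})$ — is the main obstacle; the modular-character and conjugation computations themselves are routine given Propositions \ref{modular-character-computation} and Lemma \ref{secondobservation}.

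\textbf{Step 3: assembling.} Having identified the three nonzero contributions $\Phi_w(\varrho)$, the first assertion of the Geometric Lemma \cite[\S 5, Theorem 5.2]{BZ2} immediately gives that $r_{N,\psi}(\Pi)$ is glued from $\varrho_{_0}$, $\varrho_{_1}$ and $\ind_{T_2(F,C)}^{O_2(F,C)}(\varrho_{_2})$ as $O_2(F,C)$-modules, which is exactly the statement of the theorem. (The finer filtration ordering, needed only for Corollary \ref{Cortoklingen}, would come from the orbit closure relations in Remark \ref{open-orbits} (2) together with the second assertion of the Geometric Lemma, but the theorem as stated requires only the ``glued from'' conclusion, so no ordering argument is needed here.) I would close by remarking that when $\tau$ is cuspidal, $r_{N_{1,1}}(\tau)=0$ so $\varrho_{_1}$ vanishes, and when moreover $r_{N_{1,1},\psi_\gamma}(\tau)=0$ the whole module vanishes — consistency checks that feed into the corollary.
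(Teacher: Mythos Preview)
Your overall approach---applying the Geometric Lemma across the four double cosets $\mathcal{W}=\{I_4,\tau_1,\sigma_1,\tau_1\sigma_1\}$ of Proposition \ref{S-psi-orbit-Sp4/Q}, isolating the vanishing ones via condition \eqref{starcondition}, then computing $\Phi_w$ on the survivors---is exactly the paper's approach. However, Step 1 contains a genuine error that propagates and leaves your proof with a gap.

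You assert that $\Phi_{\sigma_1}=0$ because $\sigma_1 U\sigma_1^{-1}\cap N$ ``meets $N^3=N\cap U$'' and $\psi$ is non-trivial on $N^3$. This is false: conjugation by $\sigma_1$ moves $U$ away from $N$ entirely, and in fact $\sigma_1 U\sigma_1^{-1}\cap N=\{I_4\}$ (as one reads off from the explicit groups listed in Lemma \ref{Decomposability-verification-Klingen}, Case (3)). So condition \eqref{starcondition} holds trivially for $w=\sigma_1$, and $\Phi_{\sigma_1}$ is nonzero. In the paper the correct count is: $\Phi_{I_4}=0$ (this you eventually get right), while $\Phi_{\tau_1}$, $\Phi_{\sigma_1}$, $\Phi_{\tau_1\sigma_1}$ are all nonzero and yield $\varrho_{_0}$, $\varrho_{_1}$, $\ind_{T_2(F,C)}^{O_2(F,C)}(\varrho_{_2})$ respectively. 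You yourself notice in Step 2 that $\varrho_{_1}$ involves the \emph{untwisted} functor $r_{N_{1,1}}(\tau)$ (via Lemma \ref{secondobservation} (1)) and must come from a ``third relevant index''---that index is precisely $\sigma_1$, which you had discarded. Until you repair Step 1, the bookkeeping in Step 2 cannot close.

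There is a second, smaller confusion in Step 2: for $w=\tau_1$ you claim $N_1'(w)=wUw^{-1}\cap M_\psi$ ``meets $M^1$ trivially''. In fact $U'(\tau_1)=M^1$ (see the paper's Case (1), item (2), or Lemma \ref{Decomposability-verification-Klingen} Case (4)), so $i_{_{M^1,\one}}$ is \emph{not} the identity: by Remark \ref{NormalizationFactor} (1) with $\mathcal{U}=M^1$, $\mathcal{M}=M^2$, $G=M_\psi$, it introduces the factor $\mathrm{mod}_{M^1}^{1/2}=\delta_{O_2(F,C)}^{1/2}$ and extends trivially over $M^1$. Conversely, $U'(\tau_1\sigma_1)=\{I_4\}$ (not $M^1$), and it is \emph{this} orbit where $i_{_{\{I_4\},\one}}=i_{M^2}^{M_\psi}$ is a genuine induction to $O_2(F,C)$. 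So your instinct that exactly one orbit yields an induced representation is right, but you have the roles of $\tau_1$ and $\tau_1\sigma_1$ partially swapped regarding which has $U'(w)=M^1$ versus $\{I_4\}$.
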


\begin{proof} 
	In view of Lemma \ref{Decomposability-verification-Klingen}, Proposition \ref{S-psi-orbit-Sp4/Q} and \S \ref{geometriclemma}, we may apply the Geometric Lemma to compute the twisted Jacquet module of $\Pi.$ 
By Proposition \ref{S-psi-orbit-Sp4/Q}, $\mathcal{W}= \{I_4, \tau_1,\sigma_1, \tau_1\sigma_1\}.$ To begin, we need to single out those $w \in \mathcal{W}$ for which \eqref{starcondition} holds.   For $w = I_4,$ we have $wUw^{-1} \cap N = N^3.$ 
By Lemma \ref{Psiaction-starcondition} (1), $\psi$ is non-trivial on $N^3$ and hence \eqref{starcondition} does not hold for $I_4$ yielding $\Phi_{I_4}=0.$ We shall consider each of the remaining $w,$ i.e., $w \in \{ \tau_1,\sigma_1, \tau_1\sigma_1\}$ one-by-one.
	
	\noindent \textit{Case}(1): Fix $w = \tau_1,$ we have $wUw^{-1} \cap N = N^1.$ By Lemma \ref{Psiaction-starcondition} (2), $\psi$ acts trivially on $N^1$ and hence \eqref{starcondition} holds for $\tau_1.$
	The subgroups relevant to compute the functor $\Phi_w$ are as follows:
	\begin{enumerate}
		\item $
		{M_\psi}'(w)  = wL{w}^{-1} \cap M_\psi 
		=M^2.$

		\item $U'(w)
		= wU{w}^{-1} \cap M_\psi = M^1.$

		\item $N'_{w}  = {w}^{-1}Nw \cap L =N^0.$

		\item ${\psi}'_{w} : N'_{w} \rightarrow \C^{\times}$ is the character given by $X \mapsto \psi(wX{w}^{-1}),$ for every $X \in N'_{w}.$ Explicitly, write $X \in N^0$ as $X = \left[\begin{array}{cc}
		I_2 & \left[\begin{array}{cc}
		0 & 0 \\
		0 & x
		\end{array}\right]  \\
		& I_2 
		\end{array} \right].$ Then, $wX{w}^{-1} = \left[\begin{array}{cc}
		I_2 & \left[\begin{array}{cc}
		x & 0 \\
		0 & 0
		\end{array}\right] \\
		&I_2  
		\end{array} \right]\in N^2$ and  
		${\psi}'_{w}(X) =  \psi(wX{w}^{-1})
		= \psi_{_0}\left(\tr\left(C \left[\begin{array}{cc}
		x & 0 \\
		0 & 0
		\end{array}\right] \right)\right) 
		=\psi_{_0}(\gamma x).$ Thus, ${\psi}'_{w}= \psi_{\gamma}$ ($\psi_{\gamma}$ as in \S \ref{firstobservation}).

		\item $L'(w)  = {w}^{-1}M_\psi w \cap L
		= M^3.$

		\item $  {w}^{-1}S_\psi w \cap U =U.$

		\item $ wQ {w}^{-1} \cap N = N.$ 
	\end{enumerate}
We compute the character $\varepsilon(w)$ of $M^3$ (see \S \ref{modularcharacters} (2)). 
Here, $w^{-1}S_\psi w \cap U = U.$ Therefore $\varepsilon_1(w)=1$ on $M^3.$ Similarly, $w Q w^{-1} \cap N = N$ which gives $\varepsilon_2(w)=1$ on $M^2.$ This yields  $\varepsilon(w)=1$ on $M^3.$	
	
We will determine the action given by \eqref{inducingaction}. Write $m \in M^2$ as $m=\diag(a,d,a^{-1},d^{-1})$ with $a^2=1,d\in F^{\times}.$ Then,  
	\begin{align*}
	[\varepsilon(w) \cdot r_{N'_{w},{\psi}'_{w}}(\varrho)]^{w}(m) &=  \varepsilon(w)[w^{-1}mw] \cdot r_{N'_{w},{\psi}'_{w}}(\varrho)[w^{-1}mw]  \\
	&= 1 \cdot [r_{N^0,\psi_{\gamma}}(\varrho)]^{w}(m). 
	\end{align*}
Thus, $[\varepsilon(w) \cdot r_{N'_{w},{\psi}'_{w}}(\varrho)]^{w}=[r_{N^0,\psi_{\gamma}}(\varrho)]^{w}.$  Recall from \S \ref{relevantembeddings} (3) that $M_{\psi}=M^2M^{1}.$  By \eqref{contributionPhi(w)} and Remark \ref{NormalizationFactor} (1) (applied with $\mathcal{M} = M^2, \mathcal{U} = M^1$ and $G = M_{\psi}$),  $\Phi_w(\varrho)= i_{_{M^1, \one}}([r_{N^0,\psi_{\gamma}}(\varrho)]^{w}) ={\rm mod}_{M^1}^{\frac{1}{2}} \cdot [r_{N^0,\psi_{\gamma}}(\varrho)]^{w}$, a representation of $M_{\psi}$ which acts trivially on $M^1.$  
But, $[r_{N^0,\psi_{\gamma}}(\varrho)]^{w}(m)=r_{N_{1,1}, \psi_{\gamma}}(\tau)(\diag(a,a^{-1})) \otimes \eta(d)$ by Lemma \ref{secondobservation} (2). By Proposition \ref{modular-character-computation} (5) and Remark \ref{mod-char-MPsi}, we have ${\rm mod}_{\overline{N_{1,1}}}\left(\left[\begin{array}{cc}
	a & 0 \\
	0 & d
\end{array}\right]\right)= |d|_{_F} ={\rm mod}_{M^1}(m).$  Consequently, by \S \ref{relevantembeddings} (2) and (3), $\Phi_{w}(\varrho)$ is equal to ${\rm mod}_{\overline{N_{1,1}}}^{\frac{1}{2}} \cdot (r_{N_{1,1},\psi_{\gamma}}(\tau) \otimes \eta),$ a representation of $O_2(F,C)$ with $\overline{N_{1,1}}$ action being trivial. Thus, $\Phi_{w}(\varrho)$ is equal to $	\delta_{_{O_{2}(F,C)}}^{\frac{1}{2}} \cdot (r_{N_{1,1},\psi_{\gamma}}(\tau) \otimes \eta),$ which is a representation of  $T_2(F,C)$ regarded as a representation of $O_2(F,C)$ via Remark \ref{quotientO_2}.	
	
For the remaining $w \in \mathcal{W}$ i.e., $w=\sigma_1$ and $w=\tau_1\sigma_1$ we have $wUw^{-1} \cap N = \left\{I_4\right\}.$ Thus, \eqref{starcondition} holds for both $\sigma_1$ and $\tau_1\sigma_1.$ We proceed to calculate $\Phi_w$ in these two remaining cases.

\textit{Case}(2): Fix $w = \sigma_1.$ The subgroups required to compute the contribution are as follows: 
		\begin{enumerate}
		\item $
		{M_\psi}'(w)  = wL{w}^{-1} \cap M_\psi 
		= M^2.$

		\item $U'(w)
		= wU{w}^{-1} \cap M_\psi = M^1.$

		\item $N'_{w}  = {w}^{-1}Nw \cap L =N^0.$

		\item 	Write $X \in N^0$ as $X = \left[\begin{array}{cc}I_2 & \left[\begin{array}{cc}	0 & 0 \\	0 & x
		\end{array}\right]  \\
		& I_2 
		\end{array} \right].$ Then, $wX{w}^{-1} = \left[\begin{array}{cc}
		I_2 & \left[\begin{array}{cc}
		0 & 0 \\
		0 & x
		\end{array}\right] \\
		&I_2  
		\end{array} \right].$ Therefore, 
		${\psi}'_{w}(X) =  \psi(wX{w}^{-1})
		=\psi_{_0}\left(\tr\left(C \left[\begin{array}{cc}
		0 & 0 \\
		0 & x
		\end{array}\right] \right)\right)=1.$ Thus, ${\psi}'_{w}= \one.$
		
		\item$
		L'(w) = {w}^{-1}M_\psi w \cap L 
		= M^2.$

		\item $  {w}^{-1}S_\psi w \cap U = N^4.$

		\item $ wQ {w}^{-1} \cap N = N^0.$ \end{enumerate}
	We compute the character $\varepsilon(w)$ of $M^2$ (see \S \ref{modularcharacters} (2)).
	Write $m$ in $M^2$ as $m={\rm diag}(a,d,a^{-1},d^{-1}).$  By Remark  \ref{mod-char-MPsi-Klingen-also} and Proposition \ref{modular-character-computation} (4), we obtain $[\varepsilon_1(w)](m)=|d|_{_F}^{-\frac{1}{2}}.$ By Remark \ref{mod-char-MPsi} and Proposition \ref{modular-character-computation} (1), we have $[\varepsilon_2(w)](m)=|d|_{_F}^{\frac{3}{2}} \cdot |d|_{_F}^{-1}=|d|_{_F}^{\frac{1}{2}}.$  Therefore $[\varepsilon(w)](m)=|d|_{_F}^{-\frac{1}{2}} \cdot |d|_{_F}^{\frac{1}{2}}=1$ for all $m\in M^2.$	
	
	We compute the action given by \eqref{inducingaction}. Let $m=\diag(a,d,a^{-1},d^{-1}) \in M^2.$ Then, 
	\begin{align*}
	[\varepsilon(w) \cdot r_{N'_{w},{\psi}'_{w}}(\varrho)]^{w}(m) &=  \varepsilon(w)[w^{-1}mw] \cdot r_{N'_{w},{\psi}'_{w}}(\varrho)[w^{-1}mw]  \\
	&= 1 \cdot [ r_{N^0,\one}(\varrho)]^{w}(m).	
	\end{align*}
By \eqref{contributionPhi(w)},  $\Phi_w(\varrho)= i_{_{M^1, \one}}([r_{N^0,\one}(\varrho)]^{w}).$ Proceeding as in \textit{Case}(1), we can conclude that $\Phi_w(\varrho)= {\rm mod}_{M^1}^{\frac{1}{2}} \cdot [r_{N^0,\one}(\varrho)]^{w},$ a representation of $M_{\psi}$ with $M^1$ action being trivial. Since, $[r_{N^0,\one}(\varrho)]^{w}(m)=(\eta_{|_{\{\pm 1\}}})(a)\otimes r_{N_{1,1}}(\tau)(\diag(d,d^{-1}))$ by Lemma \ref{secondobservation} (1), we get $\Phi_w(\varrho)= 	\delta_{_{O_{2}(F,C)}}^{\frac{1}{2}} \cdot (\eta_{|_{\{\pm 1\}}}\otimes r_{N_{1,1}}(\tau)),$  which is a representation of $T_2(F,C)$ regarded as a representation of $O_2(F,C)$ by composing with the quotient map.

	\textit{Case}(3): Fix $w = \tau_1 \sigma_1.$ We have the following subgroups relevant for determining the contribution.

	\begin{enumerate}
		\item $
		{M_\psi}'(w)  = wL{w}^{-1} \cap M_\psi 
		= M^2.$

		\item $U'(w)
		= wU{w}^{-1} \cap M_\psi = \left\{I_4 \right\}.$

		\item $N'_{w} = {w}^{-1}Nw \cap L = N^0.$

		\item  This is identical to (4) of \textit{Case}(1) and  thus ${\psi}'_{w}= \psi_{\gamma}$ ($\psi_{\gamma}$ as in \S \ref{firstobservation}).

		\item $
		L'(w)  = {w}^{-1}M_\psi w \cap L =  M^3.$

		\item $  {w}^{-1}S_\psi w \cap U = \{I_4\}.$

		\item $  wQ {w}^{-1} \cap N = N^2.$ \end{enumerate}
We compute the characters $\varepsilon_1(w),$ $\varepsilon_2(w)$ and $\varepsilon(w)$ specified in  \S \ref{modularcharacters} (2). 
We note that the character $\varepsilon(w)$ is not trivial in this case. Observe that $\varepsilon_1(w) = {\rm mod}_{U}^{\frac{1}{2}} \cdot {\rm mod}_{w^{-1}S_{\psi}w \cap U}^{-\frac{1}{2}}$ is a character of $M^3$ whereas $\varepsilon_2(w) = {\rm mod}_{N}^{\frac{1}{2}} \cdot {\rm mod}_{w Q w^{-1} \cap N}^{-\frac{1}{2}}$ is a character of the group $M^2.$ For $m={\rm diag}(a,d,a^{-1},d^{-1}) \in M^3,$ by Remark \ref{mod-char-MPsi-Klingen-also},  $[\varepsilon_1(w)](m)= (|a|_{_F}^{4})^{\frac{1}{2}} \cdot 1=|a|_{_F}^{2}.$ On the other hand,  for $m'={\rm diag}(a,d,a^{-1},d^{-1})\in M^2,$ Remark \ref{mod-char-MPsi} and Proposition \ref{modular-character-computation} (3) give  $[\varepsilon_2(w)](m')=|d|_{_F}^{\frac{3}{2}}.$ To calculate $\varepsilon(w),$ we consider $m=\diag(a,d,a^{-1},d^{-1})\in M^3$ and its conjugate $wmw^{-1}.$ We have $wmw^{-1}={\rm diag}(d,a^{-1},d^{-1},a).$ Note that $d^2=1$ and hence, $wmw^{-1} \in M^2.$  Thus,
	\begin{align*}
	[\varepsilon(w)](\diag(a,d,a^{-1},d^{-1})) &=[\varepsilon_1(w)](\diag(a,d,a^{-1},d^{-1}))\cdot [\varepsilon_2(w)](\diag(d,a^{-1},d^{-1},a))\\
	& =|a|_{_F}^{2} \cdot |a|_{_F}^{-\frac{3}{2}}=|a|_{_F}^{\frac{1}{2}}.
	\end{align*}

	We compute the action given by \eqref{inducingaction}. Write $m' \in M^2$ as $m'= {\rm diag}(a,d,a^{-1},d^{-1}).$ Then, 
	\begin{align*}
	[\varepsilon(w) \cdot r_{N'_{w},{\psi}'_{w}}(\varrho)]^{w}(m') &=  \varepsilon(w)[w^{-1}m'w] \cdot r_{N'_{w},{\psi}'_{w}}(\varrho)[w^{-1}m'w] \\
	&= \varepsilon(w)[\diag(d^{-1},a,d,a^{-1})] \cdot [r_{N^0,\psi_{\gamma}}(\varrho)]^{w}(m') \\
&=|d|_{_F}^{-\frac{1}{2}}\cdot [r_{N^0,\psi_{\gamma}}(\varrho)]^{w}(m')\\
&=(\delta_{M_{\psi}}^{-\frac{1}{2}}\cdot [r_{N^0,\psi_{\gamma}}(\varrho)]^{w})(m') [\mbox{ by Remark \ref{mod-char-MPsi} }]. \end{align*}
Thus, $[\varepsilon(w) \cdot r_{N'_{w},{\psi}'_{w}}(\varrho)]^{w}= (\delta_{M_{\psi}}^{-\frac{1}{2}}\cdot [r_{N^0,\psi_{\gamma}}(\varrho)]^{w}).$  By \eqref{contributionPhi(w)}, we have $\Phi_w(\varrho)= i_{_{\{I_4\}, \one}}(\delta_{M_{\psi}}^{-\frac{1}{2}}\cdot [r_{N^0,\psi_{\gamma}}(\varrho)]^{w})$ and it is further equal to $i_{M^2}^{M_{\psi}}(\delta_{M_{\psi}}^{-\frac{1}{2}}\cdot [r_{N^0,\psi_{\gamma}}(\varrho)]^{w})$ by Remark \ref{NormalizationFactor} (1). Translating to normalized induction, we obtain $\Phi_w(\varrho)=\ind_{M^2}^{M_{\psi}}([r_{N^0,\psi_{\gamma}}(\varrho)]^{w}).$ 
 By Lemma \ref{secondobservation} (3), $[r_{N^0,\psi_{\gamma}}(\varrho)]^{w}(m')= r_{N_{1,1}, \psi_{\gamma}}(\tau)(\diag(a,a^{-1})) \otimes \eta^{-1}(d).$ Thus, $\Phi_w(\varrho)$ is equal to the representation $\ind_{T_{2}(F,C)}^{O_{2}(F,C)}[r_{N_{1, 1},\psi_{\gamma}}(\tau) \otimes \eta^{-1}]$ by \S \ref{relevantembeddings} (2). From \textit{Case}(1), (2) and (3), $r_{N,\psi}(\Pi)$ is glued from $\Phi_{\tau_1}(\varrho)= 	\delta_{_{O_{2}(F,C)}}^{\frac{1}{2}}  \cdot (r_{N_{1, 1},\psi_{\gamma}}(\tau) \otimes \eta),$ $\Phi_{\sigma_1}(\varrho)= 	\delta_{_{O_{2}(F,C)}}^{\frac{1}{2}}  \cdot (\eta_{_{|_{\{\pm{1}\}}}} \otimes r_{N_{1, 1}}(\tau))$ and $\Phi_{\tau_1 \sigma_1}(\varrho).$ The proof of Theorem \ref{KlingencaseforDegA-I} is  complete.\qedhere
	
	\end{proof}

We have the following corollary to Theorem \ref{KlingencaseforDegA-I} which gives a finer structure of the twisted Jacquet module of $\ind_{Q}^{Sp_4(F)}(\eta\otimes \tau)$ depending on the nature of the inducing representation $\tau\in \Irr(SL_2(F)).$ 

\begin{corollary}\label{structure-Klingen}
	
	Fix a non-trivial character $\psi_{_0}$ of the additive group $F.$ Let $Q = L U$ denote the Klingen parabolic subgroup of the symplectic group $Sp_{4}(F).$ Suppose $\varrho = \eta \otimes \tau \in \Irr(L)$ where $\eta$ is a character of $F^{\times}$ and  $\tau \in \Irr(SL_{2}(F)).$ Denote $\Pi = \ind_{Q}^{Sp_{4}(F)}\left(\varrho\right).$ Let $\psi$ be the character of $N$ defined by \eqref{CharacterCorrspondngtoC}. Let $\omega_{\tau}$ denote the central character of $\tau.$ Define $\varrho_{_{0}}$ and $\varrho_{_{2}}$ as follows: $\varrho_{_{0}}$ is the representation of $O_{2}(F,C)$ obtained from the representation $	\delta_{_{O_{2}(F,C)}}^{\frac{1}{2}} \cdot (\omega_{\tau} \otimes \eta)$ of $T_{2}(F,C)$ via composing with the quotient map and $\varrho_{_{2}}$ is the representation $\omega_{\tau} \otimes \eta^{-1}$ of $T_2(F,C).$ Let $\varrho_{_1}$ be the representation of $O_2(F,C)$ obtained from the representation $	\delta_{_{O_{2}(F,C)}}^{\frac{1}{2}}  \cdot (\eta_{|_{\{\pm 1\}}} \otimes r_{N_{1,1}}(\tau))$ of $T_{2}(F,C)$ via composing with the quotient map. The following statements hold:
	
	\begin{enumerate}

		\item If $r_{N_{1,1},\psi_{\gamma}}(\tau)=0$ and $\tau$ is cuspidal,  $r_{N,\psi}(\Pi)=0.$
		
		\item If $r_{N_{1,1},\psi_{\gamma}}(\tau)=0$ and $\tau$ is not cuspidal,  $r_{N,\psi}(\Pi)= \varrho_{_1}.$ In particular, if $\tau$ is one dimensional, $r_{N,\psi}(\Pi)=  \varrho_{_1}.$
		
		\item Assume that $\tau$ is cuspidal and $r_{N_{1,1},\psi_{\gamma}}(\tau)\neq 0.$ Then, $r_{N,\psi}(\Pi)$ sits in the following short exact sequence of $O_{2}(F,C)$-modules:
		\begin{equation}\label{structure-in-Klingen-I}
		0\to  \ind_{T_{2}(F,C)}^{O_{2}(F,C)}( \varrho_{_{2}} ) \to r_{N,\psi}(\Pi)\to  \varrho_{_{0}} \to 0.
		\end{equation}
		
		\item Assume that $\tau$ is not cuspidal and $r_{N_{1,1},\psi_{\gamma}}(\tau)\neq 0.$ Then, $r_{N,\psi}(\Pi)$ is given by a filtration
		\begin{equation}\label{structure-in-Klingen-II}
		\{0\} \subset W_2\subset W_1\subset W_0=r_{N,\psi}(\Pi)
		\end{equation}
		where $W_0/W_1= \varrho_{_{0}},$ $W_1/W_2=  \varrho_{_1}$  and $W_2=\ind_{T_{2}(F,C)}^{O_{2}(F,C)}( \varrho_{_{2}} ).$
	\end{enumerate}

\end{corollary}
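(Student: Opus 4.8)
The plan is to deduce Corollary \ref{structure-Klingen} directly from Theorem \ref{KlingencaseforDegA-I} together with the second assertion of the Geometric Lemma (\cite[\S 5, Theorem 5.2]{BZ2}), which governs the order of the gluing according to the topology of the $S_\psi$-orbits on $Q\backslash Sp_4(F)$. Recall from Theorem \ref{KlingencaseforDegA-I} that $r_{N,\psi}(\Pi)$ is glued from the three pieces $\Phi_{\tau_1}(\varrho)=\varrho_{_0}'$ (coming from $r_{N_{1,1},\psi_\gamma}(\tau)\otimes\eta$), $\Phi_{\sigma_1}(\varrho)=\varrho_{_1}$ (coming from $\eta_{|_{\{\pm1\}}}\otimes r_{N_{1,1}}(\tau)$), and $\Phi_{\tau_1\sigma_1}(\varrho)=\ind_{T_2(F,C)}^{O_2(F,C)}(r_{N_{1,1},\psi_\gamma}(\tau)\otimes\eta^{-1})$, while $\Phi_{I_4}=0$. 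The first observation I would record is that, by the remark after Theorem \ref{KlingencaseforDegA-I} (Lemma recalling $r_{N_{1,1},\psi_\gamma}(\tau)$ acts by $\omega_\tau$ when nonzero), if $r_{N_{1,1},\psi_\gamma}(\tau)\neq 0$ then $\varrho_{_0}'$ identifies with $\varrho_{_0}$ and $\Phi_{\tau_1\sigma_1}(\varrho)=\ind_{T_2(F,C)}^{O_2(F,C)}(\varrho_{_2})$; if $r_{N_{1,1},\psi_\gamma}(\tau)=0$, then both $\varrho_{_0}'$ and $\Phi_{\tau_1\sigma_1}(\varrho)$ vanish.

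Next I would settle the vanishing statements (1) and (2). If $\tau$ is cuspidal then $r_{N_{1,1}}(\tau)=0$, so $\varrho_{_1}=0$; combined with $r_{N_{1,1},\psi_\gamma}(\tau)=0$ this kills all three pieces, giving $r_{N,\psi}(\Pi)=0$, which is (1). If $\tau$ is not cuspidal and $r_{N_{1,1},\psi_\gamma}(\tau)=0$, then only $\Phi_{\sigma_1}(\varrho)=\varrho_{_1}$ survives, so $r_{N,\psi}(\Pi)=\varrho_{_1}$; the one-dimensional case follows since a one-dimensional $\tau$ is non-cuspidal and has $r_{N_{1,1},\psi_\gamma}(\tau)=0$ (the twisted Jacquet module of a character of $SL_2(F)$ with respect to a nontrivial character of $N_{1,1}$ is zero). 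This is (2).

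For (3) and (4) the real content is the ordering of the pieces, which comes from the orbit structure in Proposition \ref{S-psi-orbit-Sp4/Q}. From Remark \ref{open-orbits} (2), $Orb_P(X_1)$ is the open $P$-orbit and $Orb_P(X_0)$ is the closed one; in Proposition \ref{S-psi-orbit-Sp4/Q} the orbit $Orb_P(X_0)$ splits under $S_\psi$ into $Orb_{S_\psi}(X_0)$ (corresponding to $w=I_4$) and $Orb_{S_\psi}(\tau_1\cdot X_0)$ (corresponding to $w=\tau_1$), while $Orb_P(X_1)$ splits into $Orb_{S_\psi}(\sigma_1\cdot X_0)$ ($w=\sigma_1$) and $Orb_{S_\psi}(\tau_1\sigma_1\cdot X_0)$ ($w=\tau_1\sigma_1$). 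I would argue that $Orb_{S_\psi}(\tau_1\sigma_1\cdot X_0)$ is open in $Q\backslash Sp_4(F)$ — it is a nonempty open subset of the open $P$-orbit and is $S_\psi$-stable — so by the second assertion of \cite[\S 5, Theorem 5.2]{BZ2} the corresponding subquotient $\ind_{T_2(F,C)}^{O_2(F,C)}(\varrho_{_2})$ occurs as a subrepresentation $W_2$ of $r_{N,\psi}(\Pi)$. Similarly I would check that $Orb_{S_\psi}(\sigma_1\cdot X_0)$ is open in the complement $Q\backslash Sp_4(F)\setminus Orb_{S_\psi}(\tau_1\sigma_1\cdot X_0)$ (again since it is the rest of the open $P$-orbit), so $\varrho_{_1}$ occurs as a subrepresentation of the quotient $r_{N,\psi}(\Pi)/W_2$, giving $W_1$ with $W_1/W_2=\varrho_{_1}$; and finally the closed orbit piece $\varrho_{_0}'$ (which is $\varrho_{_0}$ here since $r_{N_{1,1},\psi_\gamma}(\tau)\neq0$) is the top quotient $W_0/W_1=\varrho_{_0}$. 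When $\tau$ is cuspidal, $\varrho_{_1}=r_{N_{1,1}}(\tau)=0$ collapses $W_1=W_2$ and the three-step filtration becomes the short exact sequence \eqref{structure-in-Klingen-I}, proving (3); when $\tau$ is not cuspidal all three pieces are (generically) nonzero and we get the filtration \eqref{structure-in-Klingen-II}, proving (4).

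The step I expect to be the main obstacle is pinning down the exact order of the gluing — that is, verifying rigorously that $Orb_{S_\psi}(\tau_1\sigma_1\cdot X_0)$ is open and that the induced piece sits at the bottom rather than elsewhere, and likewise that $\varrho_{_1}$ sits strictly above it and $\varrho_{_0}$ at the top. This requires a careful bookkeeping of which $w$ produces an open versus a closed $S_\psi$-orbit: the dichotomy from the Geometric Lemma only tells us that subrepresentations correspond to (relatively) open orbits and quotients to (relatively) closed ones, so one must order the four orbits $\{I_4,\tau_1,\sigma_1,\tau_1\sigma_1\}$ by closure and then match the nonzero $\Phi_w$'s accordingly. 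Everything else — the identification of $\varrho_{_0}'$ with $\varrho_{_0}$, the vanishing of the various pieces in the cuspidal and one-dimensional cases — is routine bookkeeping using the properties of $r_{N_{1,1},\psi_\gamma}$ and $r_{N_{1,1}}$ recalled in the introduction.
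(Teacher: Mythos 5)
Your proposal takes essentially the same route as the paper: deduce the vanishing statements (1) and (2) from the description of the three glued pieces in Theorem \ref{KlingencaseforDegA-I}, and then establish the ordering of the filtration in (3) and (4) by matching the $S_\psi$-orbits on $Q\backslash Sp_4(F)$ to their topological closure relations and invoking the second part of \cite[Theorem 5.2]{BZ2}. The vanishing bookkeeping and the identification $r_{N_{1,1},\psi_\gamma}(\tau)=\omega_\tau$ when nonzero are exactly what the paper does, and your one-dimensional argument in (2) is fine.

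The one place you wave your hands, and correctly flag as the obstacle, is the assertion that $Orb_{S_\psi}(\tau_1\sigma_1\cdot X_0)$ is open. Writing ``it is a nonempty open subset of the open $P$-orbit and is $S_\psi$-stable'' presents as a justification what is really just a restatement of the claim: $S_\psi$-stability alone does not make an orbit open. The paper handles this more carefully. It first applies Proposition \ref{StabilizerfororbitsGeneral} to filter $\Pi|_P$ into two layers corresponding to $Orb_P(X_0)$ (closed) and $Orb_P(X_1)$ (open); the exactness of $r_{N,\psi}$ on this two-step filtration produces the top short exact sequence with $\varrho_0$ as quotient. It then passes to the open layer $\Pi_1$, whose restriction is controlled by Lemma \ref{SpsiOrbitNontrivialKlingenGegA}: the space $D_1\backslash P$ is identified with $\overline{B}\backslash GL_2(F)$, on which $O_2(F,C)$ has exactly two orbits, the singleton $\{L_0\}$ (closed) and its complement (open). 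This is the concrete fact that makes the $\tau_1$-piece of $D_1\backslash P$, hence $Orb_{S_\psi}(\tau_1\sigma_1\cdot X_0)$, open in $Q\backslash Sp_4(F)$ via Remark \ref{open-orbits}. So while your plan is sound and matches the paper's in spirit, you should fill the openness gap by this reduction to the two-orbit picture of $O_2(F,C)$ acting on $\mathbb{P}^1(F)$ (Proposition \ref{O2C-acts-GmodBbar} and Lemmas \ref{Spsi-OrbitinKlingenDegA}, \ref{SpsiOrbitNontrivialKlingenGegA}); the paper's detour through the filtration of $\Pi|_P$ is essentially a tidy way to package this, buying nothing extra conceptually but making the SES's drop out of exactness rather than requiring a separate orbit-closure argument at each stage.
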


\begin{proof} 
By  Theorem \ref{KlingencaseforDegA-I}, $r_{N,\psi}(\Pi)$ is glued from the representations $\Phi_{\tau_1}(\varrho)=	\delta_{_{O_{2}(F,C)}}^{\frac{1}{2}}  \cdot (r_{N_{1, 1},\psi_{\gamma}}(\tau) \otimes \eta),$ $\Phi_{\sigma_1}(\varrho)=\delta_{_{O_{2}(F,C)}}^{\frac{1}{2}}  \cdot (\eta_{_{|_{\{\pm{1}\}}}} \otimes r_{N_{1, 1}}(\tau))$ and $\Phi_{\tau_1 \sigma_1}(\varrho)=\ind_{T_{2}(F,C)}^{O_{2}(F,C)}[r_{N_{1, 1},\psi_{\gamma}}(\tau) \otimes \eta^{-1}].$ Proofs of (1) and (2) are clear. We prove (3) and (4) together. The assumption that $r_{N_{1,1},\psi_{\gamma}}(\tau)\neq 0$ means $r_{N_{1,1},\psi_{\gamma}}(\tau)=\omega_{\tau}.$  By Proposition \ref{StabilizerfororbitsGeneral} (applied with $n=2$ and $k=1$), $\Pi_{|_P}$ is given by a filtration $0\subset \Pi_1\subset \Pi_0=\Pi_{|_P}$ where $\Pi_{0}/\Pi_{1}$ is isomorphic to $i_{D_0}^P( \delta_Q^{\frac{1}{2}} \cdot \varrho)$ and $\Pi_1$ is isomorphic to   
	$i_{D_1}^P([\delta_Q^{\frac{1}{2}} \cdot \varrho]^{\sigma_1}).$ By Lemma \ref{Spsi-OrbitinKlingenDegA}, it is clear that $r_{N,\psi}(\Pi_0/\Pi_1)$ is supported on the $(Q,S_{\psi})$-double cosets given by $I_4$ and $\tau_1.$ Hence, $r_{N,\psi}(\Pi_0/\Pi_1)$ is glued from the representations $\Phi_{I_4}(\varrho)$ and $\Phi_{\tau_1}(\varrho).$ Since $\Phi_{I_4}=0,$ $r_{N,\psi}(\Pi_0/\Pi_1)= \Phi_{\tau_1}(\varrho).$ Noting that $r_{N_{1,1},\psi_{\gamma}}(\tau)=\omega_{\tau}$ yields that $r_{N,\psi}(\Pi_0/\Pi_1)$ is equal to $\varrho_{_{0}}.$ By Remark \ref{open-orbits} (2), we get the following short exact sequence: 
	\begin{equation}\label{shotr-exactseq-I}
	0 \to r_{N,\psi}(\Pi_{1}) \to r_{N,\psi}(\Pi) \to  \varrho_{_{0}} \to 0.
	\end{equation}

By Lemma \ref{SpsiOrbitNontrivialKlingenGegA}, $D_1\backslash P=Orb_{P}(X_1)= Orb_{S_\psi}(X_1) \bigsqcup Orb_{S_\psi}(\tau_1 \cdot X_1).$ Since $Orb_{S_\psi}(\tau_1 \cdot X_1)$ is open in $D_1\backslash P,$ it is open in  $Q \backslash Sp_4(F)$ by Remark \ref{open-orbits} (2). Also, $Orb_{S_\psi}(\tau_1 \cdot X_1) = Orb_{S_\psi}(\tau_1\sigma_1 \cdot X_0)$  implies that 
$r_{N,\psi}(\Pi)$ contains $\Phi_{\tau_1 \sigma_1}(\varrho)$ as a subrepresentation by second part of \cite[Theorem 5.2]{BZ2}. It is also clear that $r_{N,\psi}(\Pi_1)$ is glued from $\Phi_{\tau_1 \sigma_1}(\varrho)$ and $\Phi_{\sigma_1}(\varrho).$  We obtain the following  short exact sequence: 
	\begin{equation}\label{shotr-exactseq-II}
	0 \to \Phi_{\tau_1 \sigma_1}(\varrho) \to r_{N,\psi}(\Pi_1) \to \Phi_{\sigma_1}(\varrho) \to 0.
	\end{equation}
	If $\tau$ is cuspidal, we have $r_{N_{1,1}}(\tau)=0.$ Hence, $\Phi_{\sigma_1}(\varrho)=0.$ By \eqref{shotr-exactseq-II}, $r_{N,\psi}(\Pi_1)=\Phi_{\tau_1 \sigma_1}(\varrho).$ Thus, \eqref{structure-in-Klingen-I} follows from \eqref{shotr-exactseq-I}. Proof of (3) is completed. On the other hand, if $\tau$ is not cuspidal, $r_{N_{1,1}}(\tau)$ is non-zero. Consequently, \eqref{structure-in-Klingen-II} follows from \eqref{shotr-exactseq-I} and \eqref{shotr-exactseq-II}. This proves (4) and completes the proof of the corollary.
\end{proof}

\section{The finite field case}\label{finite-field-case}

In this section, we deduce the analogue of our main theorems when the groups are defined over a finite field $\F_q$ where the characteristic is not equal to $2.$  We shall see that the statements of our main results, after suitable modification, holds true for the finite field case as well. Throughout this section, $\F_q$ shall denote a finite field with $q$ elements where $q$ is odd. As previously adopted, denote the character $\psi_{_C}$ defined by \eqref{CharacterCorrspondngtoC} by $\psi.$ We note the following remark.
\begin{remark}\label{finitefield-adapatation}
\begin{enumerate}
	\item By \cite[Appendix 3, pp.167-169]{Z}, the Geometric Lemma may be applied to the finite field case as well. The point to observe here is that the representation $r_{N,\psi}(\pi)$ is a direct sum of $\Phi_w$'s.
	
	\item We note that  Proposition \ref{O2C-acts-GmodBbar} holds without change if we replace a $p$-adic field with a finite field.
	
	\item 	All modular characters are trivial in the case of a finite field.

	\item The remaining results of Section \ref{prelims} and Section \ref{Orbits-Spsi-action} hold true for a finite field $\F_q.$  
	
	\item The Gelfand-Kazhdan theorem invoked in the proof of Theorem \ref{SiegelcaseforDegA-I} holds for the finite field case as well (\cite[Proposition 4.1.1]{Bump}).
	
\end{enumerate}
\end{remark}
We are ready to state the theorems in the finite field case. In view of Remark \ref{finitefield-adapatation}, the proofs of our results in the finite field case are essentially the same as in the case of the $p$-adic field.  We first state the result in the case of induction from the Siegel parabolic subgroup.

\begin{theorem}\label{SiegelcaseforDegA-finite}
	Fix a non-trivial character $\psi_{_0}$ of the additive group $\F_q.$ Let $P=MN$ 
	denote the Siegel parabolic subgroup of the symplectic group $Sp_{4}(\F_q)$ and 
	let $\rho\in \Irr(GL_{2}(\F_q))$ be regarded as a representation of the Levi 
	subgroup $M$ of $P.$ Denote $\pi=\Ind_P^{Sp_{4}(\F_q)}(\rho).$ Let $\psi$ 
	be the character of $N$ defined by \eqref{CharacterCorrspondngtoC}. The following statements hold:
	
	\begin{enumerate}
		\item If $\rho$ is not cuspidal,  $r_{N,\psi}(\pi)=		
		\widetilde{\rho}_{|_{O_{2}(\F_q,C)}} \bigoplus \rho_{_0},$ where $\rho_{_0}$ is the representation of $O_{2}(\F_q,C)$ obtained from $r_{\overline{N_{1, 1}}}(\rho)_{|_{T_{2}(\F_q,C)}}$ via composing with the quotient map.

		\item If $\rho$ is cuspidal, $r_{N,\psi}(\pi) = \widetilde{\rho}_{|_{O_{2}(\F_q,C)}}$ as $O_{2}(\F_q,C)$-modules. 
		
\end{enumerate} \end{theorem}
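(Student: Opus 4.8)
The plan is to mirror, almost verbatim, the proof of Theorem \ref{SiegelcaseforDegA-I} in the $p$-adic case, keeping in mind the simplifications recorded in Remark \ref{finitefield-adapatation}. The central structural difference is that over a finite field the geometric lemma produces a \emph{direct sum} decomposition rather than a glued filtration (Remark \ref{finitefield-adapatation} (1), invoking \cite[Appendix 3, pp.\ 167--169]{Z}), and that all modular characters are trivial (Remark \ref{finitefield-adapatation} (3)), so the normalization factors $\delta_{_{O_{2}(\F_q,C)}}^{1/2}$ that appeared in the $p$-adic statement disappear.

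First I would invoke Remark \ref{finitefield-adapatation} (4) to note that Propositions \ref{S-psi-orbit-Sp4/P}, \ref{Stabilizerfororbits}, Lemma \ref{Psiaction-starcondition}, Lemma \ref{Decomposability-verification-Siegel} and the orbit analysis of \S\ref{Orbits-Spsi-action} all carry over unchanged to $\F_q$, so that the geometric lemma applies with $\mathcal{W}=\{I_4,\sigma_1,\tau_1\sigma_1,\sigma_2\}$. Exactly as in Step (1) of the proof of Theorem \ref{SiegelcaseforDegA-I}, the condition \eqref{starcondition} fails for $w=I_4$ (since $wNw^{-1}\cap N=N$) and for $w=\tau_1\sigma_1$ (since $wNw^{-1}\cap N=N^2$, on which $\psi$ is non-trivial by Lemma \ref{Psiaction-starcondition} (1)), so $\Phi_{I_4}=0=\Phi_{\tau_1\sigma_1}$. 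For $w=\sigma_1$ one computes the same subgroups ${M_\psi}'(w)=M^2$, $N'_w=N'(w)=M^1$, $\psi'_w=\one$, but now $\varepsilon(w)=1$ trivially and $i_{_{M^1,\one}}$ is ordinary induction with no normalization, so $\Phi_{\sigma_1}(\rho)=r_{\overline{N_{1,1}}}(\rho)_{|_{T_2(\F_q,C)}}$, inflated to $O_2(\F_q,C)$. For $w=\sigma_2$ the same computation as in the $p$-adic case, using the Gelfand--Kazhdan theorem in its finite-field form (\cite[Proposition 4.1.1]{Bump}, cited in Remark \ref{finitefield-adapatation} (5)), gives $\Phi_{\sigma_2}(\rho)=\widetilde{\rho}_{|_{O_2(\F_q,C)}}$.

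Then I would conclude by Remark \ref{finitefield-adapatation} (1) that $r_{N,\psi}(\pi)=\Phi_{\sigma_1}(\rho)\oplus\Phi_{\sigma_2}(\rho)$, which immediately yields statement (1); and when $\rho$ is cuspidal, $r_{\overline{N_{1,1}}}(\rho)=0$ forces $\Phi_{\sigma_1}(\rho)=0$, giving statement (2). I do not anticipate a serious obstacle here: the only point requiring a sentence of care is the replacement of the short exact sequence by a direct sum, which is exactly the content of Remark \ref{finitefield-adapatation} (1), and the disappearance of the modular character $\delta_{_{O_2(F,C)}}^{1/2}$, which is Remark \ref{finitefield-adapatation} (3); everything else is a transcription of the argument already given for the $p$-adic field. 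The mildest care is needed in checking that ``not cuspidal'' over $\F_q$ still guarantees $r_{\overline{N_{1,1}}}(\rho)\neq 0$, which holds since a non-cuspidal irreducible representation of $GL_2(\F_q)$ is by definition a constituent of a parabolically induced representation and hence has non-zero Jacquet module.
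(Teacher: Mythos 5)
Your proposal is correct and follows exactly the route the paper takes: the paper's own treatment of the finite-field case consists precisely of the observation (Remark \ref{finitefield-adapatation}) that the $p$-adic proof of Theorem \ref{SiegelcaseforDegA-I} transfers verbatim once one drops the modular characters, replaces the filtration by a direct sum via \cite[Appendix 3]{Z}, and uses the finite-field Gelfand--Kazhdan theorem, and you have simply written out that transfer explicitly. The only addition you make beyond the paper's two-line remark is the closing sanity check that non-cuspidal over $\F_q$ still forces $r_{\overline{N_{1,1}}}(\rho)\neq 0$, which is correct and harmless to include.
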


The analogue to Theorem \ref{KlingencaseforDegA-I} and Corollary \ref{structure-Klingen} can be made very explicit in the case of finite field. To this end, let us briefly recall the following facts from the theory of representations of $SL_2(\F_q)$ where $q$ is odd.  

\subsection{Representations of $SL_2(\F_q)$}\label{Representations-sl2}

We refer the reader to \cite[\S 5 \& \S 6]{Garrett} for this paragraph. Denote by $B$ the standard Borel subgroup of $SL_2(\F_q).$ One has the Levi decomposition  $B = M_{1,1} N_{1,1}$  where the Levi subgroup $M_{1,1}= \left\{\left[\begin{array}{cc}
a &0 \\
0& a^{-1}
\end{array}\right]: a \in {\F_q}^{\times}\right\}.$ For $m=\left[\begin{array}{cc}
a &0 \\
0& a^{-1}
\end{array}\right] \in M_{1,1}$ and $n=\left[\begin{array}{cc}
1 &x \\
0& 1
\end{array}\right] \in N_{1,1}$ we have $mnm^{-1}=\left[\begin{array}{cc}
1 &a^2 x \\
0& 1
\end{array}\right].$ Note that there are two inequivalent classes of characters of $N_{1,1}$ under the conjugation action by $M_{1,1}.$ Denote these two characters by $\Psi$ and $\Psi'.$
We shall identify the irreducible representations $\tau$ of $SL_2(\F_q)$ according to whether $r_{N_{1,1},\Psi}(\tau)$ and $r_{N_{1,1},\Psi'}(\tau)$ are zero or non-zero. 

\begin{enumerate}
	\item $SL_2(\F_q)$ has a unique one-dimensional representation namely the trivial character whose twisted Jacquet modules with respect to both $\Psi$ and $\Psi'$ are zero.
	
	\item There are $q-1$ irreducible representations $\tau$ for which both $r_{N_{1,1},\Psi}(\tau)$ and $r_{N_{1,1},\Psi'}(\tau)$ are non-zero. These are
	\begin{enumerate}
		\item $\frac{q-3}{2}$ irreducible principal series representations each having dimension $q+1,$
		
		\item one special representation $St_2$ having dimension $q,$ which is  the non-trivial component of $\Ind_B^{SL_2(\F_q)}(\one)$ and,
		
		\item  $\frac{q-1}{2}$ irreducible cuspidal representations each having dimension $q-1.$

	\end{enumerate}
	
	\item Let $\chi$ be an order $2$ character of ${\F_q}^{\times}.$ There are two inequivalent irreducible components of $\Ind_B^{SL_2(\F_q)}(\chi)$ such that each has dimension $\frac{q+1}{2}.$  Denote them by $\tau_1$ and $\tau_2$ such that they satisfy the following:
	\begin{enumerate}
		\item $r_{N_{1,1},\Psi}(\tau_1)$ is non-zero and $r_{N_{1,1},\Psi'}(\tau_1)=0$ and
		
		\item $r_{N_{1,1},\Psi}(\tau_2)=0$ and  $r_{N_{1,1},\Psi'}(\tau_2)$ is non-zero.
	\end{enumerate}

	\item There are two cuspidal representations of $SL_2(\F_q)$ each having dimension $\frac{q-1}{2}.$ We denote them by $\tau_1'$ and $\tau_2'$ such that 
	\begin{enumerate}
		\item $r_{N_{1,1},\Psi}(\tau_1')$ is non-zero and $r_{N_{1,1},\Psi'}(\tau_1')=0$ and
		
		\item $r_{N_{1,1},\Psi}(\tau_1')=0$ and $r_{N_{1,1},\Psi'}(\tau_2')$ is non-zero.
	\end{enumerate}
\end{enumerate}

\begin{remark} 
	We wish to note that if $r_{N_{1,1},\Psi}(\tau)$ or $r_{N_{1,1},\Psi'}(\tau)$ is non-zero then it is equal to $\omega_{\tau},$ the central character of $\tau.$
\end{remark}

We have the following theorem which is the analogue of Corollary \ref{structure-Klingen} in the case of induction from the Klingen parabolic.

\begin{theorem}\label{KlingencaseforDegA-finite}
	Fix a non-trivial character $\psi_{_0}$ of the additive group $\F_q.$ Let $Q = L U$ denote the Klingen parabolic subgroup of the symplectic group $Sp_{4}(\F_q).$ Suppose $\varrho = \eta \otimes \tau \in \Irr(L)$ where $\eta$ is a character of ${\F_q}^{\times}$ and $\tau \in \Irr(SL_{2}(\F_q)).$ Denote $\Pi = \Ind_{Q}^{Sp_{4}(\F_q)}\left(\varrho\right).$  Let $\psi$ be the character of $N$ defined by \eqref{CharacterCorrspondngtoC}. Let $\omega_{\tau}$ denote the central character of $\tau.$ Let $\varrho_{_{0}}$ and $\varrho_{_1}$  be the representations of $O_{2}(\F_q,C)$ obtained from the representations $\omega_{\tau} \otimes \eta$ and $\eta_{|_{\{\pm 1\}}} \otimes r_{N_{1,1}}(\tau)$ of $T_{2}(\F_q,C)$ via composing with the quotient map.  Let  $\varrho_{_{2}}$ denote the representation $\omega_{\tau} \otimes \eta^{-1}$ of $T_2(\F_q,C).$	Put $\mu_1= \varrho_{_{0}} \bigoplus \varrho_{_1} \bigoplus \Ind_{T_2(\F_q,C)}^{O_{2}(\F_q,C)}(\varrho_{_{2}})$ and $\mu_2= \varrho_{_{0}} \bigoplus \Ind_{T_2(\F_q,C)}^{O_{2}(\F_q,C)}(\varrho_{_{2}}).$ The following statements hold:
	
	\begin{enumerate}
		\item If $\tau$ is one dimensional, $r_{N,\psi}(\Pi)= \varrho_{_1}.$
		
		\item If $\tau$ is an irreducible principal series representation or the special representation $St_2,$  $r_{N,\psi}(\Pi) = \mu_1.$ 
		
		\item If $\tau$ is an irreducible cuspidal representation of dimension $q-1,$ $r_{N,\psi}(\Pi) = \mu_2.$ 
		
		\item Fix $\tau=\tau_1.$ If $\psi_{\gamma}$ is in the $M_{1,1}$-orbit of $\Psi$ then $r_{N,\psi}(\Pi)$ is equal to  $\mu_1$  and equal to $\varrho_{_1}$ otherwise.  
		
		\item Fix $\tau=\tau_1'.$ If $\psi_{\gamma}$ is in the $M_{1,1}$-orbit of $\Psi$  then $r_{N,\psi}(\Pi)$ is equal to  $\mu_2$  and equal to $0$ otherwise.		
	\end{enumerate}

\end{theorem}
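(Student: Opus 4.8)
The plan is to derive Theorem~\ref{KlingencaseforDegA-finite} as a direct corollary of the finite field analogue of Theorem~\ref{KlingencaseforDegA-I} (equivalently, of Corollary~\ref{structure-Klingen}) together with the explicit classification of irreducible representations of $SL_2(\F_q)$ recalled in \S\ref{Representations-sl2}. The key simplification in the finite field setting, noted in Remark~\ref{finitefield-adapatation}(1) and (3), is that $r_{N,\psi}(\Pi)$ is an \emph{honest direct sum} $\Phi_{\tau_1}(\varrho)\oplus\Phi_{\sigma_1}(\varrho)\oplus\Phi_{\tau_1\sigma_1}(\varrho)$ rather than merely being glued from these (no nontrivial extensions to worry about), and all modular characters are trivial. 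So from the finite field version of Theorem~\ref{KlingencaseforDegA-I} we get
\[
r_{N,\psi}(\Pi)= \varrho_{_0}'\oplus \varrho_{_1}\oplus \Ind_{T_2(\F_q,C)}^{O_2(\F_q,C)}(\varrho_{_2}'),
\]
where $\varrho_{_0}'$ is built from $r_{N_{1,1},\psi_\gamma}(\tau)\otimes\eta$, $\varrho_{_1}$ from $\eta_{|_{\{\pm1\}}}\otimes r_{N_{1,1}}(\tau)$, and $\varrho_{_2}'$ from $r_{N_{1,1},\psi_\gamma}(\tau)\otimes\eta^{-1}$, each regarded as a representation of $O_2(\F_q,C)$ via the quotient map where appropriate.

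Next I would feed in the structure theory of \S\ref{Representations-sl2}. The three ``variable'' inputs into the formula are $r_{N_{1,1},\psi_\gamma}(\tau)$, $r_{N_{1,1}}(\tau)$, and the central character $\omega_\tau$; recall from the Remark after \S\ref{Representations-sl2} that whenever $r_{N_{1,1},\Psi}(\tau)$ or $r_{N_{1,1},\Psi'}(\tau)$ is non-zero it equals $\omega_\tau$. For part (1), $\tau$ trivial: both twisted Jacquet modules vanish, so $\varrho_{_0}'=0$ and $\Ind(\varrho_{_2}')=0$, while $r_{N_{1,1}}(\tau)\neq 0$ (the trivial representation is a subquotient of $\Ind_B^{SL_2}(\one)$ containing the trivial Jacquet module line), giving $r_{N,\psi}(\Pi)=\varrho_{_1}$. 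For parts (2) and (3), $\tau$ is a principal series, the Steinberg $St_2$, or a cuspidal of dimension $q-1$: in each of these $r_{N_{1,1},\psi_\gamma}(\tau)=\omega_\tau\neq 0$ (both generic characters appear), so $\varrho_{_0}'=\varrho_{_0}$ and $\varrho_{_2}'=\varrho_{_2}$; and $r_{N_{1,1}}(\tau)$ is non-zero precisely when $\tau$ is not cuspidal, giving $\mu_1$ for principal series and $St_2$, and $\mu_2$ for the $(q-1)$-dimensional cuspidals. For parts (4) and (5), $\tau=\tau_1$ or $\tau=\tau_1'$: here $r_{N_{1,1},\psi_\gamma}(\tau)$ is $\omega_\tau$ or $0$ according to whether $\psi_\gamma$ lies in the $M_{1,1}$-orbit of $\Psi$ (the orbit on which $\tau_1$, $\tau_1'$ are generic); combining with $r_{N_{1,1}}(\tau_1)\neq 0$ (not cuspidal, so $\varrho_{_1}$ always present) and $r_{N_{1,1}}(\tau_1')=0$ (cuspidal, so no $\varrho_{_1}$ term) yields the stated dichotomies.

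Concretely, the proof I would write is short: invoke the finite field version of Theorem~\ref{KlingencaseforDegA-I} to get the direct sum decomposition; then run through the five cases of \S\ref{Representations-sl2}, in each case quoting whether $r_{N_{1,1},\psi_\gamma}(\tau)$ and $r_{N_{1,1}}(\tau)$ vanish, substituting $\omega_\tau$ for the non-vanishing twisted Jacquet modules, and reading off which of the three summands survive. I would also remark, as in Remark~\ref{finitefield-adapatation}(5), that the Gelfand--Kazhdan input is not needed here since we are on the Klingen side, and that the identification of $\psi_\gamma$ with one of $\Psi,\Psi'$ depends on the choice of $\gamma\in\F_q^\times$ and the additive character $\psi_{_0}$, which is why parts (4) and (5) are phrased conditionally.

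The main obstacle, such as it is, is purely bookkeeping: correctly matching the abstract labels $\Psi,\Psi'$ (defined by the two $M_{1,1}$-conjugacy classes of nontrivial characters of $N_{1,1}$) with the concrete character $\psi_\gamma$ determined by $\gamma$, and checking that for $\tau_1,\tau_1'$ the twisted Jacquet module is non-zero on exactly one of the two orbits. This is where care is needed to make parts (4) and (5) precise; everything else is a mechanical substitution into the formula from Theorem~\ref{KlingencaseforDegA-I}. There is no genuinely hard step, since the finite field Geometric Lemma and the $SL_2(\F_q)$ representation theory are both being imported wholesale.
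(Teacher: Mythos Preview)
Your proposal is correct and follows essentially the same approach as the paper. The paper does not give a separate detailed proof of this theorem; it simply states that, in view of Remark~\ref{finitefield-adapatation}, the proofs in the finite field case are the same as in the $p$-adic case, so your write-up is an explicit elaboration of exactly what the paper leaves implicit: invoke the finite field analogue of Theorem~\ref{KlingencaseforDegA-I} to obtain $r_{N,\psi}(\Pi)$ as a direct sum of the three $\Phi_w(\varrho)$, then run through the representation types of $\tau$ listed in \S\ref{Representations-sl2} to determine which summands survive.
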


\begin{remark}
	Statements analogous to (4) and (5) can be made for the representations $\tau_2$ and $\tau_2'$ as well.
	
\end{remark}

\section*{Acknowledgements}
This work forms a significant component  of Sanjeev Kumar Pandey's thesis at the Indian Institute of Science Education and Research Tirupati. The Ministry of Education, Government of India, is acknowledged by Sanjeev Kumar Pandey for providing financial
assistance through the Institute Fellowship program. The authors thank Dipendra Prasad for several helpful conversations and insightful comments during the preparation of this article. The authors would like to also thank U K Anandavardhanan and  Shiv Prakash Patel for their useful remarks.


\begin{thebibliography}{99}
 	
 	\bibitem{KH1}
 	K. Balasubramanian and H. Khurana, {\it A certain twisted Jacquet module of \rm{GL}(4) over a finite field}, J. Pure Appl. Algebra {\bf 226} (2022), Paper no. 106392, 16 pp.
 	
 	\bibitem{KH2}
 	K. Balasubramanian and H. Khurana,
 {\it On a twisted Jacquet module of \rm{GL}(6) over a finite field}, New York Journal of Mathematics {\bf 29}, (2023), 874--910 (to appear), arXiv version available at \url{https://doi.org/10.48550/arXiv.2206.02634}.
 	
 	
 	\bibitem{KH3}
 	K. Balasubramanian, A. Dangodara and H. Khurana,
 	\href{https://doi.org/10.48550/arXiv.2206.03024}{\it On a twisted Jacquet module of \rm{GL}(2n) over a finite field}, available at https://arxiv.org/abs/2206.03024.
 	
 	
 	\bibitem{BZ1}
 	I. N. Bernstein and A. V. Zelevinsky, {\it Representations of \rm{GL}(n,F) where F is a non-archimedean local field}, Russian Math. Surveys, {\bf 31:3} (1976), 1--68.
 	
 	
 	\bibitem{BZ2}
 	I. N. Bernstein and A. V. Zelevinsky, {\it Induced representations of reductive p-adic groups I}, Ann. Scient. E.N.S., 4e serie, t., {\bf 10} (1977), 441--472.
 	
 	
 	\bibitem{Bump}
 	D. Bump, {\it Automorphic forms and representations}, Cambridge University Press, no. 55, (1998).
 	
 	
 	\bibitem{DPSarah}
 	S. Dijols and D. Prasad, {\it Symplectic models for unitary groups}, Trans. Amer. Math. Soc. {\bf 372} (2019), no. 3, 1833--1866.
 	
 		
 	\bibitem{GS}
 	W. T. Gan and S. Takeda, {\it On Shalika Periods and a theorem of Jacquet-Martin}, Amer. J. Math. {\bf 132} (2010), 475--528.
 	
 	\bibitem{PR}
 	P. Garrett, {\it Buildings and Classical Groups}, Chapman and Hall, London, (1997).
 	
 	\bibitem{Garrett} P. Garrett,
 	{\it Representations of $GL_2$ and $SL_2$ over finite fields}, available at  \url{http://www.math.umn.edu/˜garrett/m/repns/notes\_2014-15/04\_finite\_GL2.pdf}
 	
 	\bibitem{OfirZahi}
 	O. Gorodetsky and Z. Hazan, {\it  On certain degenerate Whittaker models for cuspidal representations of \rm{GL}(k·n, $\F_q$)}, Math. Z. {\bf 291} (2019), 609--633.
 	
 	
 	\bibitem{Gustafson}
 	R. Gustafson, {\it The degenerate principal series for \rm{Sp}(2n)}, Mem. Amer. Math. Soc., vol 33, no. {\bf 248}, (1981).
 	
 	 \bibitem{Nachbin}
 	L. Nachbin, {\it The Haar Integral}, D. Van Nostrand Company, Inc. (1964).
 	
 	
 		\bibitem{Omer}
 		O. Offen, {\it Period integrals of automorphic forms and local distinction},
 			Relative aspects in representation theory, {L}anglands
 			functoriality and automorphic forms, Lecture Notes in Math., Vol. {\bf 2221} (2018), Springer, Cham, 159--195.
 	
 	
 	\bibitem{DPRamin}
 	D. Prasad and  R. Takloo-Bighash, {\it  Bessel models for \rm{GSp(4)}}, J. Reine Angew. Math. {\bf 655} (2011), 189--243.
 	
 	
 	\bibitem{DPDegIMRN}
 	D. Prasad, {\it The space of degenerate Whittaker models for general
 		linear groups over a finite field}, Int. Math. Res. Not. {\bf 11} (2000), 579--595.
 	
 	
 	\bibitem{DPDegTIFR}
 	D. Prasad, {\it  The space of degenerate Whittaker models for \rm{GL(4)} over p-adic fields}, in Cohomology of arithmetic groups, L-functions and
 	automorphic forms, (Mumbai, 1998/1999), Tata. Inst. Fund. Res. Stud. Math., {\bf 15}, Tata Inst. Fund. Res., Bombay, (2001), 103--115.
 	
 	
 	\bibitem{DPJAlgmult}
 	D. Prasad, {\it Multiplicities under basechange: finite field case}, J. Algebra {\bf 556} (2020), 1101--1114.
 	
 	
 	\bibitem{DPDivAlgJNT}
 	D. Prasad, {\it Some remarks on representations of a division algebra and of the Galois group of a local field},  J. Number Theory {\bf 74} (1999), no. 1, 73--97.
 	
 	
 	\bibitem{DPRaghuramDuke}
 	D. Prasad and A. Raghuram, {\it Kirillov theory for $GL_2(\mathscr{D})$} where $\mathscr{D}$ is a division algebra over a non-archimedean local field, Duke Math. J., Vol. {\bf 104} (2000), no .1, 19--44.
 	
 
 	
 	\bibitem{Brooks-Schmidt} 
 	B. Roberts and R. Schmidt, {\it Some results on Bessel functionals for \rm{GSp}(4)}, Doc. Math. {\bf 21} (2016), 467--553.
 	
 	
 \bibitem{Scharlau}
 W. Scharlau, {\it Quadratic and Hermitian Forms}, Grundlehren der
 mathematischen Wissenschaften {\bf 270}, Springer-Verlag (1985).
 
 	
 	
 	\bibitem{TA}
 	M. Tadic, {\it Representations of p-adic symplectic groups}, Compositio Math. {\bf 90} (1994), no. 2, 123--181.
 	
 	
 	
 	\bibitem{Z}
 	A. Zelevinsky, {\it Representations of finite classical groups- a Hopf algebra approach}, Springer LNM, no. {\bf 869}, (1980).
 	
 	
\end{thebibliography}
\end{document}